\DeclareMathOperator{\divo}{div}
\DeclareMathOperator{\Id}{Id}
\DeclareMathOperator{\dist}{dist}
\DeclareMathOperator{\co}{co}
\DeclareMathOperator{\spano}{span}
\DeclareMathOperator{\supp}{supp}
\DeclareMathOperator{\sinto}{sint}
\DeclareMathOperator{\into}{int}
\theoremstyle{plain}
\newtheorem{thm}{Theorem}[section]
\newtheorem{prop}[thm]{Proposition}
\newtheorem{cor}[thm]{Corollary}
\newtheorem{lem}[thm]{Lemma} 
\theoremstyle{definition}
\theoremstyle{remark}
\newtheorem{rem}[thm]{Remark}
\newcommand{\R}{\mathbb{R}}
\newcommand{\Z}{\mathbb{Z}}
\newcommand{\N}{\mathbb{N}}
\newcommand{\wto}{\rightharpoonup}
\newcommand{\css}{\subset\subset}
\newcommand{\eps}{\varepsilon}
\begin{document}
\begin{center}
\begin{Large}
Connecting Atomistic and Continuous Models of Elastodynamics
\end{Large}
\\[0.5cm]
\begin{large}
Julian Braun\footnote{Universität Augsburg, Germany, {\tt julian.braun@math.uni-augsburg.de}}
\end{large}
\\[0.5cm]
\today
\\[1cm]
\end{center}
\begin{abstract}
We prove long-time existence of solutions for the equations of atomistic elastodynamics on a bounded domain with time-dependent boundary values as well as their convergence to a solution of continuum nonlinear elastodynamics as the interatomic distances tend to zero. Here, the continuum energy density is given by the Cauchy-Born rule. The models considered allow for general finite range interactions. To control the stability of large deformations we also prove a new atomistic Gårding inequality.
\end{abstract}

\section{Introduction}

The dynamic behavior of an elastic material is classically described by continuum mechanics in terms of a deformation mapping that satisfies the second-order, nonlinear, hyperbolic partial differential equations of elastodynamics subject to given initial and boundary conditions. The precise equations are given by Newton's second law of motion. For hyperelastic materials, the internal forces are obtainable as the first variation of an elastic energy that depends in a local but nonlinear way on the deformation gradient.

At the same time, on a microscopic level, crystalline solids consist of many atoms, e.g. on a part of a Bravais lattice, and can be described directly by their interaction. The interatomic forces can effectively be modeled in terms of classical interaction potentials. Using again Newton's second law of motion, we arrive at a very high dimensional system of ordinary differential equations.

The classical connection between atomistic and continuum models of nonlinear elasticity is provided by the Cauchy-Born rule: The continuum stored energy function associated to a macroscopic affine map is given by the energy per unit volume of a crystal which is homogeneously deformed with the same affine mapping. In particular, this entails the assumption that there are no fine scale oscillations on the atomistic scale. We will call this function the Cauchy-Born energy density in the following. Note though, that it is not clear a priori whether the Cauchy-Born hypothesis is true or not.

In the previous work \cite{braun16static}, Schmidt and the author rigorously discuss existence and convergence of solutions as well as the Cauchy-Born rule in the case of elastostatics. We also refer to the introduction of \cite{braun16static} for a more exhaustive account of recent mathematical progress in this field with an emphasis on static equilibrium problems.

Our aim in this work is to establish a rigorous link between atomistic models and the corresponding Cauchy-Born continuum models for the elastodynamic behavior of crystalline solids accounting for body forces, boundary values, and initial conditions. We will prove such a connection in the asymptotic regime where the interatomic distance $\eps$ goes to $0$ and will even consider long times and large deformations.

In more detail, we will show that as long as the continuum solution exists and satisfies certain stability conditions, there are solutions of the corresponding atomistic initial-boundary value problems with lattice spacing $\eps$ that converge to the continuum solution uniform in time as $\eps \to 0$. Or to look at it from the other direction: We will give sufficient conditions on the body forces, initial conditions, and boundary conditions in the atomistic model such that there are solutions that follow a continuum solution as $\eps \to 0$ and thus obey the Cauchy-Born rule.

Recent publications (\cite{emingstatic}, \cite{emingdynamic}, \cite{ortnertheil13}) already provide results of this type for small displacements on a flat torus and for the full space problem with a far-field condition, respectively. But naturally the question arises if such an analysis is possible for a material occupying a general finite domain, on the boundary of which there might also be prescribed time-dependent boundary values. To cite Ericksen \cite[p.~207]{Ericksen08} ``Cannot someone do something like this for a more realistic case, say zero surface tractions on part of the boundary and given displacements on the remainder?'' In \cite{braun16static}, Schmidt and the author have given a positive answer to this question for the static problem with given displacements on the full boundary. Implicitly, results under mixed or pure traction boundary conditions are largely included. Here, we want to extend these results to the dynamic case. For this we will make use of some of the estimates proven in \cite{braun16static}.

Such a treatment of arbitrary domains and general displacement boundary conditions is of interest not only from a theoretical perspective but also with a view to specific situations that are of interest in applications, that can use our results as a starting point. Besides discussing elastic behavior, our results can also be used to discuss questions of stability and even the onset of instabilities, since the stability assumptions we make are designed to be quite sharp.

While equilibrium situations play an important role in mechanics, in many situations the material behavior decisively depends on inertial effects and, as a consequence, static or quasistatic descriptions are insufficient. Mathematically, the dynamic equations are considerably more challenging. Already in the continuum description, we have to discuss a quasi-linear, second-order, hyperbolic system under time-dependent boundary conditions. While the problems have natural energies that are conserved, they are of limited use in the analysis since level sets are typically unbounded in relevant norms and can contain regions where there is a loss of stability. 

In view of the recent results by other authors mentioned above, it should be pointed out that the treatment of the boundary value case is not a straightforward extension of the previous results. Let us just mention some of the difficulties. An important but subtle point in our main theorem is the condition posed on the atomistic boundary data. The atomistic boundary values can not be arbitrary but have to be chosen in a precise range to reflect the continuum boundary values while at the same time ensuring that there are no surface effects, so that the Cauchy-Born rules holds up to the boundary. Indeed, for arbitrary atomistic boundary values, one expects surface effects and a failure of the Cauchy-Born rule. A precise and rigorous mathematical treatment of such surface relaxation effects is currently still out of reach (but cf.\ \cite{theilsurface11}). In order to allow for as many atomistic boundary conditions (and body forces) as possible, we consider general convergence rates $\eps^\gamma$ in our main theorem, Theorem \ref{thm:atomisticwavethm}, and only restrict $\gamma$ as much as necessary. While smaller $\gamma$ will lead to a larger variety of atomistic boundary values, the maximal $\gamma = 2$ gives optimal convergence rates. Furthermore, there are several more technical problems. Most importantly, certain methods which are available on the flat torus or on the whole space do not translate to our setting. E.g., quasi-interpolations do not preserve boundary conditions. Instead, we use a different and more robust approach to the residual estimates that works in all cases and does not require any more regularity than the dynamical result in \cite{ortnertheil13}.

For the atomistic equations of elastodynamics, besides considering boundary conditions, there is an additional open question of considerable importance. Is it possible to establish the existence of atomistic solutions that satisfy the Cauchy-Born rule and their link to the continuum solutions not just for small times and deformations close to a stable affine configuration but for long times and large deformations?

In \cite{emingdynamic}, E and Ming only prove a short time result. An extension is not obvious, since their methods were restricted to small displacements. In \cite{ortnertheil13}, Ortner and Theil are indeed aware of this restriction that also applies to their results. They proposed that one could indeed extend the results to long times if one were to establish an atomistic version of the Gårding inequality.

The need for such an inequality can be understood as follows. For small displacements it is sufficient to stay close enough to a given stable affine deformation to ensure that the second variation of the potential energy is positive uniformly in $H^1_0$. For large deformations this is, in general, false. Even if at each point the gradient corresponds locally to a stable affine deformation, the second variation can still be negative globally. A Gårding inequality helps to work around this situation. It states that one can still get uniform positivity in $H^1_0$ from the local stability if one is willing to add a large constant times a lower order term (more precisely, the square of the $L^2$-norm). While a Gårding inequality alone is not sufficient to treat large deformations for the static equations, this inequality is key to the dynamic equations. In the continuum case the Gårding inequality is indeed part of a well-established theory. In the discrete case such a Gårding inequality is more subtle. Already the question of what constitutes a `locally stable deformation' requires a deeper analysis in the atomistic case, as we will discuss in Section \ref{sec:preparations}. In particular, the local stability assumption from the continuum case turns out to be insufficient. Additionally, in the continuous case the continuity of the coefficients and the deformation gradient is a crucial assumption for the Gårding inequality. This assumption has to be replaced by a more quantified version that is adapted to the discrete nature of the atomistic problem. In this spirit we will indeed establish an atomistic Gårding inequality, Theorem \ref{thm:discreteGårding}.

To give a short overview, we will start by giving a precise description of our models in Section \ref{sec:models}. Next we will shortly discuss in Section \ref{sec:preparations} the different concepts of stability and cite some important results about the stability constants. A crucial ingredient in the proof of our main theorem will be the observation that a smooth solution of the Cauchy-Born continuum equations solves the atomistic equations up to a small residuum. Still in Section \ref{sec:preparations}, this is made rigorous with the residual estimates that we can cite from \cite{braun16static}. In Section \ref{sec:cont}, we will use existing short time results, as well as different ideas about hyperbolic regularity, optimal elliptic regularity under weak assumptions, and a fine discussion of compositions and products in Sobolev spaces, to prove a result on the maximal existence time of solutions to the continuum equations of elastodynamics.

But the main results in this work can be found in Section \ref{sec:atom} where we will state and prove the atomistic Gårding inequality, Theorem \ref{thm:discreteGårding}, as well as our main theorem, Theorem \ref{thm:atomisticwavethm}. It states that as long as the continuum solutions exists and is atomistically stable, there exists a solution to the atomistic equations close to it. We also quantify the required conditions on the atomistic body forces, boundary values, and initial values in relation to their continuum counterparts.

Lastly, in the appendices we collect and prove a few technical lemmata concerning elliptic and hyperbolic regularity as well as the multiplication of many Sobolev functions. Some of these results might already be (implicitly) known to experts in the field but they do not seem to be available in the literature.

\section{The Models} \label{sec:models}
\subsection{The Continuum Model}
We consider a bounded, open set $\Omega \subset \R^d$, a time interval $[0,T)$, deformations $y \colon \Omega \times [0,T) \to \R^d$, an energy density $W_{\rm cont} \colon \R^{d \times d} \to (-\infty,\infty]$, initial positions $h_0 \in H^1(\Omega; \R^d)$, initial velocities $h_1 \in L^2(\Omega;\R^d)$, a body force $f \in L^2(\Omega \times [0,T); \R^d)$, and Dirichlet boundary data $g \in L^2([0,T); H^1(\Omega ; \R^d))$. We then consider the potential energy
\[
E(y;f)(t) = \int\limits_\Omega W_{\rm cont}(\nabla y (x,t)) - y(x,t)f(x,t) \,dx,
\]
whenever it is well-defined.

In the static case the relevant deformations are the local minimizers of the potential energy. In the dynamic case, we use Newton's second law of motion where the forces are given by the first variation of the potential energy. The reference body is assumed to have constant density $\rho$. By choice of units we can just take $\rho \equiv 1$. That means we are looking for (weak) solutions to the initial boundary value problem
\[ \left\{ \begin{array}{r c l l}
      \ddot{y}(x,t)-\divo (DW_{\rm cont}(\nabla y (x,t))) &=& f(x,t) &  \text{in}\ \Omega \times (0,T), \\
      y(x,t) & = & g(x,t) & \text{on}\ \partial\Omega \times (0,T),\\
      y(x,0) & = & h_0(x) & \text{in}\ \Omega,\\
      \dot{y}(x,0) & = & h_1(x) & \text{in}\ \Omega.      
\end{array} \right. \]

The assumptions on $W_{\rm cont}$ should be weak enough to be consistent with typical interatomic interaction potentials, e.g., Lennard-Jones potentials. Therefore, we should not assume global (quasi-)convexity or growth at infinity and $W_{\rm cont}$ should be allowed to have singularities. Still one can solve the problem quite generally, as long as the energy density and all the data are sufficiently smooth and as long as the energy density is well-behaved at the initial datum.

\subsection{The Atomistic Model}

We will mostly use the same notation as in \cite{braun16static}. We consider the reference lattice $\eps \Z^d$, where $\eps > 0$ is the lattice spacing. This partitions $\R^d$ into the cubes $\{z\} + \big(-\frac{\eps}{2},\frac{\eps}{2}\big]^d$ with $z \in \eps \Z^d$. Given $x \in \R^d$, we then define $\hat{x}\in \eps \Z^d$ to be the midpoint of the corresponding cube and $Q_\eps(x)$ the cube itself. 

The actual position of the atoms are described by a deformation map $y \colon (\Omega \cap \eps \Z^d) \times [0,T) \to \R^d$. We want to look at a general finite range interaction model, i.e., there is a finite set $\mathcal{R} \subset \Z^d \backslash \{0\}$ denoting the possible interactions. We will always assume that $\spano_{\Z} \mathcal{R}=\Z^d$ and $\mathcal{R} = - \mathcal{R}$. In the energy, the atom marked by $x,\tilde{x} \in \eps \Z^d$ then can only interact directly if there is a $z\in \eps \Z^d$ with $x,\tilde{x} \in z + \eps \mathcal{R}$. Furthermore, we assume our system to be translationally invariant such that the interaction can only depend on the matrix of differences $D_{\mathcal{R},\eps} y (x) = (\frac{y(x+\eps \rho)-y(x)}{\eps})_{\rho \in \mathcal{R}}$ with $x \in \eps \Z^d$, where we already use the natural scaling that has an optimal interatomic distance on scale $\eps$. The site potential $W_{\rm atom} \colon (\R^d)^\mathcal{R} \to (-\infty,\infty]$ is then assumed to be independent of $\eps$. Compare \cite{BLL:02} for a detailed discussion of this scaling.

As a mild symmetry assumption on $W_{\rm atom}$, we will assume throughout that
\[ W_{\rm atom} (A) = W_{\rm atom} (T(A))\]
for all $A \in (\R^d)^\mathcal{R}$, where
\[T(A)_{\rho} = -A_{-\rho}.\]
This is indeed a quite weak assumption. In a typical situation this just means that we have partitioned the overall energy in such a way, that the site potential is invariant under a point reflection at that atom combined with the natural relabeling.

In particular, if $W_{\rm atom}$ is sufficiently smooth, we have
\[D^k W_{\rm atom} ((B \rho)_{\rho \in \mathcal{R}})[T(A_1), \dotsc, T(A_k)] = D^k W_{\rm atom} ((B \rho)_{\rho \in \mathcal{R}})[A_1, \dotsc, A_k].\]
Letting $R_{\rm max} = \max \{\lvert \rho \rvert \colon \rho \in \mathcal{R}\}$ and $R_0 = \max\{R_{\rm max}, \frac{\sqrt{d}}{4}\}$, the discrete gradient $D_{\mathcal{R},\eps}y$ is surely well-defined on the discrete 'semi-interior'
\[\sinto_\eps \Omega = \{x \in \Omega \cap \eps \Z^d \colon \dist(x,\partial \Omega) > \eps R_0\}.\]
Additionally, the definition of $R_0$ implies that
\[\Omega_\eps = \bigcup_{z \in \into_\eps \Omega} Q_\eps(z) \subset \Omega\]
which will be used later on.
The energy is then defined by a sum over $\sinto_\eps \Omega$, such that any variations should only change these gradients. Hence, variations should only be allowed on the full discrete interior
\[\into_\eps \Omega = \{x \in \Omega \cap \eps \Z^d \colon \dist(x,\partial \Omega) > 2\eps R_0\}\]
and the boundary values should be prescribed on the boundary layer $\partial_\eps \Omega = \Omega \cap \eps \Z^d \backslash \into_\eps \Omega$. Indeed, we consider a boundary datum $g_{\rm atom} \colon \partial_\eps \Omega \times [0,T) \to \R^d$ and define the set of admissible deformations for a fixed time as
\[ \mathcal{A}_\eps (\Omega, g) = \{ y \colon \Omega \cap \eps \Z^d \to \R^d \colon y(x)=g(x) \text{ for all } x \in \partial_\eps \Omega\}.\]
Given a body force $f_{\rm atom} \colon \into_\eps \Omega \times [0,T) \to \R^d$ we will look at the potential energy
\[
E_\eps(y;f_{\rm atom})(t) = \eps^d \Big( \sum\limits_{x \in \sinto_\eps \Omega} W_{\rm atom}(D_{\mathcal{R},\eps} y (x,t)) -\sum\limits_{x \in \eps \Z^d \cap \Omega} y(x,t)f_{\rm atom}(x,t) \Big).
\]
The scaling ensures that affine deformations have an energy independent of $\eps$ (up to lower order terms) and, more generally, sufficiently smooth deformations have a finite and non-trivial energy in the limit, cf.\ \cite{BLL:02}.

For later use, we define the atomistic (semi-)norms
\begin{align*}
\lVert y \rVert_{\ell^2_\eps(\into_\eps \Omega}^2 &= \eps^d \sum_{x \in \into_\eps \Omega} \lvert y(x) \rvert^2 \\ 
\lVert y \rVert_{h^1_\eps(\sinto_\eps \Omega)}^2 &= \eps^d \sum_{x \in \sinto_\eps \Omega} \lvert D_{\mathcal{R},\eps} y(x) \rvert^2.
\end{align*}
Note in particular the norm equivalency
\[\sup\limits_{x \in \sinto_\eps \Omega} \lvert D_{\mathcal{R},\eps} y(x) \rvert \leq \eps^{-\frac{d}{2}} \lVert y \rVert_{h^1_\eps(\sinto_\eps \Omega)} \]
which will play a crucial role later on.

Given $g \colon \partial_\eps \Omega \to \R^d$, $y \colon \Omega \cap \eps \Z^d$ minimizes $\lVert y \rVert_{h^1_\eps(\sinto_\eps \Omega)}$ under the constraint $y(x)=g(x)$ for all $x \in \partial_\eps \Omega$ if and only if $(y,u)_{h^1_\eps(\sinto_\eps \Omega)}=0$ for all $u \in \mathcal{A}_\eps(\Omega,0)$ and $y(x)=g(x)$ for all $x \in \partial_\eps \Omega$. Thus, for every $g \colon \partial_\eps \Omega \to \R^d$ there is precisely one such $y$, it depends linearly on $g$ and is the unique solution to $\divo_{\mathcal{R},\eps} D_{\mathcal{R},\eps} y = 0$ with boundary values $g$. We write $y=T_\eps g$. Accordingly, we define the semi-norm
\[ \lVert g \rVert_{\partial_\eps \Omega, 0} = \lVert T_\eps g \rVert_{h^1_\eps(\sinto_\eps \Omega)}.\]
This norm on the boundary does not play such an important role in the dynamic case as in the static case, but we will later define the more important $\lVert g \rVert_{\partial_\eps \Omega, dyn}$ in the same spirit.

In the static case one is interested in finding local minimizers. In the dynamic case we additionally have an initial configuration $h_{0,{\rm atom}} \colon (\Omega \cap \eps \Z^d) \to \R^d$ and initial velocities $h_{1,{\rm atom}} \colon (\Omega \cap \eps \Z^d) \to \R^d$ such that the compatibility conditions $h_{0,{\rm atom}} \in \mathcal{A}_\eps (\Omega, g_{\rm atom}(\cdot,0))$ and $h_{1,{\rm atom}} \in \mathcal{A}_\eps (\Omega, \dot{g}_{\rm atom}(\cdot,0))$ hold true. At last, let us assume that all atoms have the same mass $m_\eps = \eps^d \rho$. This scaling ensures that the macroscopic reference body has a finite positive mass density $\rho$. As remarked before, we can assume $\rho=1$. Note that the scaling of the potential energy and the masses only affects the scaling of time. With our choice the time will not be rescaled and remains a macroscopic quantity. For the body forces, this scaling corresponds to a macroscopic acceleration of each atom (e.g. through gravity). 

Again we apply Newton's second law of motion and arrive at the initial boundary value problem
\[ \left\{ \begin{array}{r c l l}
      \ddot{y}(x,t) -\divo_{\mathcal{R},\eps} \big( DW_{\rm atom}(D_{\mathcal{R},\eps} y (x,t))\big) &=& f_{\rm atom}(x,t) &  \text{in}\ \into_\eps \Omega \times (0,T), \\
      y(x,t) & = & g_{\rm atom}(x,t) & \text{on}\ \partial\Omega_\eps \times [0,T),\\
      y(x,0) & = & h_{0,{\rm atom}}(x) & \text{in}\ \Omega \cap \eps \Z^d,\\
      \dot{y}(x,0) & = & h_{1,{\rm atom}}(x) & \text{in}\ \Omega \cap \eps \Z^d      
\end{array} \right. \]
where $DW_{\rm atom}(M) = \big( \frac{\partial W_{\rm atom}(M)}{\partial M_{i\rho}} \big)_{\substack{1 \leq i \leq d \\ \rho \in \mathcal{R}}}$ for $M = (M_{i\rho})_{\substack{1 \leq i \leq d \\ \rho \in \mathcal{R}}} \in \R^{d \times \mathcal{R}} \cong (\R^d)^\mathcal{R}$ and we write
\[\divo_{\mathcal{R},\eps} M(x) = \sum\limits_{\rho \in \mathcal{R}} \frac{M_\rho (x) - M_\rho(x-\eps \rho)}{\eps}\]
for any $M \colon \Omega \cap \eps \Z^d \to \R^{d \times \mathcal{R}} \cong (\R^d)^\mathcal{R}$. There are, of course, no actual derivatives in space involved here. These are just our short notations for the finite difference operators.

\subsection{The Cauchy-Born Rule}
As described in detail in the introduction, it is a fundamental problem to identify the correct $W_{\rm cont}$ that should be taken for the continuous equation so that one can hope for atomistic solutions close by as $\eps$ becomes small enough. The classical ansatz to resolve this question by applying the Cauchy-Born rule, leads to setting $W_{\rm cont} = W_{\rm CB}$, where in our setting the Cauchy-Born energy density has the simple mathematical expression  
\[W_{\rm CB}(A) := W_{\rm atom} ((A\rho)_{\rho \in \mathcal{R}}).\]
In the following we will only consider $W_{\rm cont} = W_{\rm CB}$, where $W_{\rm atom}$ is given. One of our main goals is to justify this choice rigorously.

\section{Preparations} \label{sec:preparations}

\subsection{Stability}

First we want to discuss the question of stability. We will only give a short summary. All proofs can be found in \cite{braun16static} and even more details in \cite{braunphdthesis}.

For the continuous equations the correct notion of stability for a $A\in\R^{d \times d}$ is given by the positivity of the Legendre-Hadamard stability constant, which for any nonempty, open, bounded $U \subset \R^n$ is given by
\begin{align*}
\lambda_{\rm LH}(A) &:= \inf\limits_{u \in H^1_0(U;\R^d) \backslash \{0\}} \frac{\int_U D^2W_{\rm CB}(A)[\nabla u (x), \nabla u(x)] \,dx}{\int_U \lvert \nabla u (x) \rvert^2 \,dx}\\
&= \inf\limits_{\xi, \eta \in \R^d\backslash \{0\}} \frac{D^2W_{\rm CB}(A)[\xi \otimes \eta,\xi \otimes \eta]}{\lvert \xi \rvert^2 \lvert \eta \rvert^2}
\end{align*}

While this condition is the correct notion to ensure existence and uniqueness of solutions to the continuous equation, it turns out that it is to weak to guarantee that there is a solution to the atomistic problem close by.

For fixed $\eps>0$, a tensor $K \in \R^{(d \times \mathcal{R})\times (d \times \mathcal{R})}$, and a bounded, open, and non-empty $\Omega \subset \R^d$ we set
\[ \lambda_\eps(K, \Omega) = \inf\limits_{\substack{
            y \in \mathcal{A}_\eps(\Omega,0)\\
            y \neq 0}}
            \frac{ \eps^d \sum\limits_{x \in \sinto_\eps \Omega} K [D_{\mathcal{R},\eps} y (x),D_{\mathcal{R},\eps} y (x)]}{\eps^d \sum\limits_{x \in \sinto_\eps \Omega} \lvert D_{\mathcal{R},\eps} y (x)\rvert^2}.\]
We then define the atomistic stability constant by
\[\lambda_{\rm atom}(K)=\lim_{\eps \to 0}\lambda_\eps (K, \Omega) = \inf\limits_{\eps > 0} \lambda_\eps (K, \Omega).\]
In particular, we are interested in the cases $K=D^2W_{\rm atom}(A)$ for $A \in \R^{d \times \mathcal{R}}$ and $K=D^2W_{\rm atom}((A\rho)_{\rho \in \mathcal{R}})$ for $A \in \R^{d \times d}$. In either case we will just write $\lambda_{\rm atom}(A)$.

The limit in the definition exists, converges to the infimum as claimed, and is independent of $\Omega$, cf.\ \cite[Prop. 3.2]{braun16static}. Furthermore, one can show that looking at larger and larger boxes with periodic boundary conditions gives the same quantity, cf.\ \cite[Prop. 3.1]{braun16static}. In this way the notion of stability here can be shown to be equivalent to the notion in \cite{hudsonortner}. The only difference is a choice of equivalent norms.

One can also give a characterization in spirit of the Legendre-Hadamard condition
\begin{align*}
\lambda_{\rm atom}(K)= &\inf \bigg\{\frac{K[\xi \otimes c(k),\xi \otimes c(k)]}{\lvert \xi \rvert^2 (\lvert c(k) \rvert^2 + \lvert s(k) \rvert^2)}\\ &+ \frac{K[\xi \otimes s(k), \xi \otimes s(k)]}{\lvert \xi \rvert^2 (\lvert c(k) \rvert^2 + \lvert s(k) \rvert^2)} \colon\\
&\qquad \xi \in \R^d \backslash \{0\}, k \in [0,2\pi)^d\backslash \{0\} \bigg\},
\end{align*}
where $c(k)_\rho = \cos(\rho k) -1$ and $s(k)_\rho = \sin(\rho k)$, cf.\ \cite[Cor. 3.7]{braun16static}. In the case $K=D^2W_{\rm atom}((A\rho)_{\rho \in \mathcal{R}})$ it is very easy to see, by looking at the liminf as $k \to 0$ instead of the full infimum, that $\lambda_{\rm atom}(A) \leq C \lambda_{\rm LH}(A)$ for $A \in \R^{d \times d}$. The constant $C$, again, is just a consequence of the choice of equivalent norms. Since here $k$ is indeed a wave number, this is a quite intuitive property: For a crystalline material to be stable, it has to be stable by perturbations on all wave lengths. In contrast, for the continuous equations the stability under long wave length perturbations is sufficient, which corresponds to the long wave length limit $k \to 0$.

We also note, that the stability constants $\lambda_{\rm atom}(A)$ and $\lambda_{\rm LH}(A)$ depend continuously on $A$ as long as $W_{\rm atom} \in C^2$.

In \cite{braun16static} we also give criteria for atomistic stability that are simpler to check but not as sharp. Additionally, we discuss examples analytically and, in particular, give an example that $\lambda_{\rm atom}(A) < 0 < \lambda_{\rm LH}(A)$ can indeed occur.

\subsection{Residual Estimates}

Here we just want to state the crucial residual estimate as well as two results on approximations. These results have been proven in \cite{braun16static}. To avoid stronger regularity assumptions, it is important to not just estimate the residuum at the atom sites by using the continuum equations there. Instead, one uses the continuum equations at every point and gets rid of certain error terms by averaging. Additionally, the norms in the error terms can be improved with a regularization of the continuum solution.

\begin{prop} \label{prop:ell2residuum}
Let $V \subset \R^{d \times \mathcal{R}}$ be open and $W_{\rm atom} \in C^4_b(V)$. Let $f \in L^2(\Omega;\R^d)$ and set
\[ \tilde{f}(x) = \fint_{Q_\eps(x)} f(a)\,da\]
for $x \in \into_\eps \Omega$. Furthermore let $\eps \in (0,1]$ and $y \in C^{3,1}(\R^d;\R^d)$ with
\begin{align*}
\co \{D_{\mathcal{R},\eps} y (\hat{x}+ \eps \sigma), (\nabla y (x) \rho)_{\rho \in \mathcal{R}}\} \subset V
\end{align*}
for all $x \in \Omega_\eps$ and $\sigma \in \mathcal{R} \cup \{0\}$. Then we have
\begin{align*}
\big\lVert -\tilde{f} &- \divo_{\mathcal{R},\eps} \big( DW_{\rm atom}(D_{\mathcal{R},\eps} y)\big) \big\rVert_{\ell_\eps^2(\into_\eps \Omega)}\\
&\leq \lVert -f - \divo DW_{\rm CB}(\nabla y)\rVert_{L^2(\Omega_\eps; \R^d)}+ C \eps^2 \Big\lVert \lVert \nabla^4 y \rVert_{L^\infty(B_{\eps R}(x))}\\
&+ \lVert \nabla^3 y \rVert_{L^\infty(B_{\eps R}(x))}^\frac{3}{2} + \lVert \nabla^2 y \rVert_{L^\infty(B_{\eps R}(x))}^3 + \eps\lVert \nabla^3 y \rVert_{L^\infty(B_{\eps R}(x))}^2 \Big\rVert_{L^2(\Omega_\eps)},
\end{align*}
where $\Omega_\eps = \bigcup_{z \in \into_\eps \Omega} Q_\eps(z)$, $R=2R_{\rm max}+\frac{3\sqrt{d}}{2}$ and $C = C(d,\mathcal{R}, \lVert D^2 W_{\rm atom} \rVert_{C^2(V)}) >0$.
\end{prop}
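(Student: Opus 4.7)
The plan is to derive, at each $x \in \into_\eps \Omega$, a pointwise comparison between the atomistic quantity $\divo_{\mathcal{R},\eps}(DW_{\rm atom}(D_{\mathcal{R},\eps}y))(x)$ and an appropriate cube-average of the continuum quantity $\divo DW_{\rm CB}(\nabla y)$, and then assemble a global $L^2$ bound. Since the discrete expression is constant on $Q_\eps(x)$ and $\Omega_\eps = \bigcup_{z \in \into_\eps \Omega} Q_\eps(z)$, the discrete norm $\lVert \cdot \rVert_{\ell^2_\eps(\into_\eps \Omega)}$ is precisely the $L^2(\Omega_\eps)$-norm of the piecewise-constant extension. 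Adding and subtracting $\fint_{Q_\eps(x)}[f(a) + \divo DW_{\rm CB}(\nabla y)(a)]\,da$ inside the norm and recognizing $\tilde f(x) = \fint_{Q_\eps(x)} f(a)\,da$ splits the target into the averaged continuum residuum (which is controlled by the full $L^2(\Omega_\eps)$-norm of $-f - \divo DW_{\rm CB}(\nabla y)$ by Jensen) plus a consistency error that must be bounded by $C\eps^2$ times higher derivatives.

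The consistency error is attacked by a two-stage Taylor expansion. First, write
\[
D_{\mathcal{R},\eps} y(\hat x)_\sigma - \nabla y(a)\sigma = \tfrac{\eps}{2}\nabla^2 y(a)[\sigma,\sigma] + \tfrac{\eps^2}{6}\nabla^3 y(a)[\sigma,\sigma,\sigma] + (\text{remainder}),
\]
for $a \in Q_\eps(x)$, with remainders controlled by $\nabla^4 y$ and $\nabla^3 y$ on a ball $B_{\eps R}(x)$; the choice $R = 2R_{\rm max} + \frac{3\sqrt d}{2}$ accommodates the full stencil of $D_{\mathcal{R},\eps}$ applied at both $x$ and $x - \eps\rho$, intersected with $Q_\eps(x)$. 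Second, Taylor-expand $DW_{\rm atom}$ up to fourth order around the affine argument $(\nabla y(a)\rho)_{\rho \in \mathcal{R}}$, which is legitimate by the assumed $C^4_b(V)$ regularity together with the convex-hull containment hypothesis that keeps the expansion inside $V$. Applying the divergence stencil $\eps^{-1}[\cdot(x) - \cdot(x-\eps\rho)]$ to the resulting expansion reproduces $\divo DW_{\rm CB}(\nabla y)(a)$ at leading order. The key cancellation is that the formally $O(\eps)$ correction, arising from the half-integer shift hidden in $\eps^{-1}[\cdot(x) - \cdot(x-\eps\rho)]$, vanishes after averaging over $Q_\eps(x)$ and summing over $\rho$ by the $T$-symmetry identity
\[
D^k W_{\rm atom}((B\rho)_{\rho})[T(A_1),\dotsc,T(A_k)] = D^k W_{\rm atom}((B\rho)_{\rho})[A_1,\dotsc,A_k].
\]
Thus the genuine leading error is $O(\eps^2)$.

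The four structural terms inside the $L^2$-norm then reflect the multilinearity of the higher derivatives of $W_{\rm atom}$: the $\nabla^4 y$ contribution is the pure fourth-order Taylor remainder in $y$; the $\lVert \nabla^2 y \rVert^3_{L^\infty}$ contribution arises from $D^4 W_{\rm atom}$ applied to three copies of the leading $\eps \nabla^2 y$ correction; the $\eps \lVert \nabla^3 y \rVert^2_{L^\infty}$ contribution comes from $D^3 W_{\rm atom}$ applied to two copies of the $\eps^2 \nabla^3 y$ correction; and the mixed cross term involving both $\nabla^2 y$ and $\nabla^3 y$ is collected via Young's inequality into the $\lVert \nabla^3 y \rVert^{3/2}_{L^\infty}$ term, using that $\nabla^2 y$ is a priori bounded on $\Omega_\eps$. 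The passage from pointwise $L^\infty(B_{\eps R}(x))$ bounds to an $L^2(\Omega_\eps)$ estimate is routine. I expect the main obstacle to be precisely this error bookkeeping: one must align the finite-difference stencil, the choice of Taylor base point $a$ (essentially the midpoint of $x$ and $x-\eps\rho$ averaged into $Q_\eps(x)$), and the $T$-symmetry identity so that every formally $O(1)$ or $O(\eps)$ term cancels and only contributions with the stated nonlinear structure in the derivatives of $y$ survive.
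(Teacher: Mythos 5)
Your plan matches the route the paper sketches for the proof it cites from \cite{braun16static}: identify the $\ell^2_\eps(\into_\eps\Omega)$ norm with the $L^2(\Omega_\eps)$ norm of the piecewise-constant extension, insert the cube average of the continuum residual (bounded by Jensen's inequality), and control the remaining consistency error by a two-stage Taylor expansion, with the point symmetry $W_{\rm atom}(A)=W_{\rm atom}(T(A))$ together with $\mathcal{R}=-\mathcal{R}$ killing the naively $O(\eps)$ contribution. That is the correct strategy, and the identification of the $T$-symmetry as the algebraic source of the crucial cancellation is exactly right.

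Two corrections to your bookkeeping, though. First, the Taylor identity you display for $D_{\mathcal{R},\eps}y(\hat x)_\sigma - \nabla y(a)\sigma$ omits the term $\nabla^2 y(a)[\hat x - a,\sigma]$; this term is $O(\eps)$ \emph{pointwise} and only disappears after the average over $Q_\eps(x)$ because $\hat x$ is the centroid of the cube. So in fact two distinct cancellation mechanisms must be invoked at order $\eps$: the geometric one from averaging over the cube and the algebraic one from the $T$-symmetry. Your write-up collapses them into a single appeal to the symmetry, which would leave an uncancelled $O(\eps)$ remainder. Second, the $\lVert\nabla^3 y\rVert^{3/2}$ term should not be obtained ``using that $\nabla^2 y$ is a priori bounded.'' Young's inequality with exponents $(3,\tfrac32)$ gives $\lvert\nabla^2 y\rvert\,\lvert\nabla^3 y\rvert\le\tfrac13\lvert\nabla^2 y\rvert^3+\tfrac23\lvert\nabla^3 y\rvert^{3/2}$ unconditionally, and this is what must be used; any $y$-dependent a priori bound on $\nabla^2 y$ would contaminate the constant $C$, which the proposition requires to depend only on $d$, $\mathcal{R}$, and $\lVert D^2W_{\rm atom}\rVert_{C^2(V)}$.
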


These residual estimates are particularly strong if we combine them with the following two approximation results:
\begin{prop} \label{prop:approximation1}
For any $R>0$, $k,d \in \N$, $p\geq 1$, there is a $C=C(R,d,p)>0$ such that for any $U \subset \R^d$ measurable and $y \in W^{k,p}(U + B_{(R+1)\eps}(0);\R^d)$ we have
\begin{align*}
\Big\lVert \lVert \nabla^k (y \ast \eta_\eps) \rVert_{L^\infty(B_{\eps R}(\cdot))} \Big\rVert_{L^p(U)} \leq C \lVert \nabla^k y \rVert_{L^{p}(U+B_{(R+1)\eps}(0))},
\end{align*}
where $\eta_\eps$ is the standard scaled smoothing kernel.
\end{prop}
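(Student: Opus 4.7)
The plan is to reduce the claim to a pointwise bound via the standard identity $\nabla^k(y * \eta_\eps) = (\nabla^k y) * \eta_\eps$, which is valid since $y \in W^{k,p}$, and then integrate against $x$ using Jensen's inequality and Fubini. The hypothesis is tailored so that for every $x \in U$ and every $z \in B_{\eps R}(x)$, the ball $B_\eps(z)$ on which the convolution samples $\nabla^k y$ is contained in $U + B_{(R+1)\eps}(0)$, which is exactly where $\nabla^k y$ is assumed to live.

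For the pointwise step, fix $x \in U$ and $z \in B_{\eps R}(x)$. Since the standard smoothing kernel satisfies $\supp \eta_\eps \subset B_\eps(0)$ and $\lVert \eta_\eps \rVert_{L^\infty} \leq C \eps^{-d}$, I would write
\[
\bigl\lvert \nabla^k (y*\eta_\eps)(z)\bigr\rvert
= \Bigl\lvert \int_{B_\eps(z)} \nabla^k y(w)\, \eta_\eps(z-w)\,dw \Bigr\rvert
\leq C \fint_{B_\eps(z)} \lvert \nabla^k y(w)\rvert \,dw.
\]
Because $B_\eps(z) \subset B_{(R+1)\eps}(x)$, enlarging the domain of averaging costs only the volume ratio $(R+1)^d$, so the right-hand side is bounded by $C(R,d)\,\fint_{B_{(R+1)\eps}(x)} \lvert \nabla^k y\rvert\,dw$. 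Since this final expression is independent of $z$, taking the $L^\infty$-supremum over $z \in B_{\eps R}(x)$ preserves the same bound.

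For the $L^p$ step, I would raise the pointwise inequality to the $p$-th power and apply Jensen's inequality to the average, obtaining $\bigl(\fint_{B_{(R+1)\eps}(x)} \lvert \nabla^k y\rvert\bigr)^p \leq \fint_{B_{(R+1)\eps}(x)} \lvert \nabla^k y\rvert^p$, which is what makes the argument work uniformly for every $p \geq 1$ without appealing to maximal function theory. Integrating in $x \in U$ and swapping the order with Fubini gives
\[
\int_U \fint_{B_{(R+1)\eps}(x)} \lvert \nabla^k y(w)\rvert^p\,dw\,dx
= \int_{U + B_{(R+1)\eps}(0)} \lvert \nabla^k y(w)\rvert^p \cdot \frac{\lvert\{x \in U : \lvert x-w\rvert < (R+1)\eps\}\rvert}{\lvert B_{(R+1)\eps}\rvert}\,dw,
\]
and the weight in the last integral is bounded by $1$. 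This yields the desired bound $\lVert \nabla^k y \rVert_{L^p(U+B_{(R+1)\eps}(0))}^p$, up to the harmless constant $C(R,d,p)$. There is no real analytic obstacle here; the only thing requiring care is the bookkeeping of the ball radii $\eps,\eps R,\eps(R+1)$, which is precisely why the statement enlarges $U$ by $B_{(R+1)\eps}(0)$.
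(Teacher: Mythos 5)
The paper does not give its own proof of this proposition; it cites the companion paper \cite{braun16static}, so there is no in-text argument to compare against. Your proof is correct and complete: pulling the $L^\infty$ bound on $\eta_\eps$ out of the convolution to get $\lvert(\nabla^k y)\ast\eta_\eps(z)\rvert \leq C\fint_{B_\eps(z)}\lvert\nabla^k y\rvert$, enlarging $B_\eps(z)\subset B_{(R+1)\eps}(x)$ at the cost of the volume factor $(R+1)^d$, and then the Jensen--Fubini step with the weight $\lvert\{x\in U:\lvert x-w\rvert<(R+1)\eps\}\rvert/\lvert B_{(R+1)\eps}\rvert\leq 1$ all go through as you describe, and the constant depends only on $R,d,p$ (and through $\|\eta\|_\infty$ on the fixed kernel). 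Choosing Jensen plus Fubini rather than invoking the Hardy--Littlewood maximal function is the right call here since it covers $p=1$ with no extra work; the only cosmetic point is that the inner norm should formally be an essential supremum, which is harmless because $(\nabla^k y)\ast\eta_\eps$ is continuous on the set where it is defined.
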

\begin{prop} \label{prop:approximation2}
Let $d \in \{1,2,3,4\}$, $\Omega \subset \R^d$ open and bounded with Lipschitz boundary, $V \subset \R^{d \times \mathcal{R}}$ be open and $W_{\rm atom} \in C^5_b(V)$. Then, there is a $C>0$ such that for all $\eps \in (0,1]$ and all $y \in H^4(\Omega + B_{\eps}(0);\R^d)$
with
\begin{align*}
\inf_{x\in\Omega}\inf_{t \in [0,1]} \dist ((1-t)(\nabla y (x) \rho)_{\rho \in \mathcal{R}}+t(\nabla (y \ast \eta_\eps) (x) \rho)_{\rho \in \mathcal{R}}, V^c)>0,
\end{align*}
we have
\begin{align*}
\lVert \divo & DW_{\rm CB}(\nabla y(x)) - \divo DW_{\rm CB}(\nabla (y \ast \eta_\eps)(x)) \rVert_{L^2(\Omega)}\\
& \leq C \eps^2 \big( \lVert \nabla^2 y \rVert_{L^4(\Omega+B_{\eps}(0))} \lVert \nabla^3 y \rVert_{L^4(\Omega+B_{\eps}(0))} + \lVert \nabla^4 y \rVert_{L^2(\Omega+B_{\eps}(0))} \big)
\end{align*}
where $\eta_\eps$ is the standard scaled smoothing kernel.
\end{prop}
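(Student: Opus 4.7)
The plan is to algebraically split the difference $\divo DW_{\rm CB}(\nabla y) - \divo DW_{\rm CB}(\nabla y_\eps)$ (with $y_\eps := y \ast \eta_\eps$) into a piece in which only the second derivatives have been mollified and a piece in which only the coefficient has been mollified, then to exploit that a symmetric mollifier produces approximation errors of order $\eps^2$ rather than $\eps$. By the chain rule, $\divo DW_{\rm CB}(\nabla z) = D^2 W_{\rm CB}(\nabla z) : \nabla^2 z$ (with $:$ denoting contraction over three indices), so
\begin{align*}
\divo DW_{\rm CB}(\nabla y) - \divo DW_{\rm CB}(\nabla y_\eps) &= D^2 W_{\rm CB}(\nabla y) : (\nabla^2 y - \nabla^2 y_\eps) \\
&\quad + \bigl( D^2 W_{\rm CB}(\nabla y) - D^2 W_{\rm CB}(\nabla y_\eps) \bigr) : \nabla^2 y_\eps \\
&=: A + B.
\end{align*}

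For $A$, the line-segment hypothesis at $t = 0$ and $t = 1$ places $(\nabla y(x)\rho)_{\rho}$ and $(\nabla y_\eps(x)\rho)_{\rho}$ in a set on which $D^2 W_{\rm CB}$ is uniformly bounded (via $W_{\rm atom} \in C^5_b(V)$), so $\|A\|_{L^2(\Omega)} \leq C\|\nabla^2 y - (\nabla^2 y) \ast \eta_\eps\|_{L^2}$. The standard scaled smoothing kernel is symmetric, hence $\int z\,\eta(z)\,dz = 0$, so a second-order Taylor expansion of $f$ inside $f \ast \eta_\eps$ yields the mollifier estimate
\[
\|f - f \ast \eta_\eps\|_{L^p(U)} \leq C \eps^2 \|\nabla^2 f\|_{L^p(U + B_\eps(0))} \qquad (1 \leq p \leq \infty),
\]
which applied with $f = \nabla^2 y$ and $p = 2$ gives the $\eps^2 \|\nabla^4 y\|_{L^2}$ contribution. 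For $B$, the full line-segment hypothesis permits writing
\[
D^2 W_{\rm CB}(\nabla y) - D^2 W_{\rm CB}(\nabla y_\eps) = \int_0^1 D^3 W_{\rm CB}\bigl((1-t)\nabla y_\eps + t \nabla y\bigr)[\nabla y - \nabla y_\eps]\,dt
\]
with $D^3 W_{\rm CB}$ uniformly bounded along the segment. Hölder's inequality in the factorization $L^2 = L^4 \cdot L^4$ then gives $\|B\|_{L^2} \leq C \|\nabla y - \nabla y_\eps\|_{L^4} \|\nabla^2 y_\eps\|_{L^4}$, and the same mollifier estimate (now with $p = 4$, $f = \nabla y$) together with Young's inequality $\|\nabla^2 y_\eps\|_{L^4} \leq \|\nabla^2 y\|_{L^4(\Omega + B_\eps(0))}$ produces the remaining $\eps^2 \|\nabla^2 y\|_{L^4} \|\nabla^3 y\|_{L^4}$ term.

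The main technical point is the $O(\eps^2)$ mollifier bound in $L^p$: the rate of the proposition rests entirely on the symmetry of $\eta$ killing the first moment, and the pointwise Taylor-remainder argument combined with Minkowski's inequality is the cleanest route to controlling the remainder by $\nabla^2 f$ in the desired norm. The dimensional restriction $d \in \{1,2,3,4\}$ enters only through Sobolev embedding, in order to ensure $\nabla^2 y, \nabla^3 y \in L^4$ when $y \in H^4$, so that the right-hand side of the claimed estimate is finite; it plays no role in the estimates themselves.
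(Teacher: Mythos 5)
The paper itself does not prove Proposition~\ref{prop:approximation2}; it is cited from \cite{braun16static} (``These results have been proven in \cite{braun16static}''), so there is no in-paper argument to compare against line by line. Evaluated on its own merits, your proof is correct and is the natural argument. The decomposition $D^2W_{\rm CB}(\nabla y):(\nabla^2 y - \nabla^2 y_\eps) + (D^2W_{\rm CB}(\nabla y) - D^2W_{\rm CB}(\nabla y_\eps)):\nabla^2 y_\eps$ correctly isolates a term controlled by the second-order mollifier bound at $p=2$ and a term controlled by the same bound at $p=4$ together with Young for $\|\nabla^2 y_\eps\|_{L^4}$; the line-segment hypothesis is exactly what is needed for the fundamental-theorem representation of $D^2W_{\rm CB}(\nabla y)-D^2W_{\rm CB}(\nabla y_\eps)$ via $D^3W_{\rm CB}$, and $W_{\rm atom}\in C^5_b(V)$ gives the uniform bounds on $D^2W_{\rm CB}$ and $D^3W_{\rm CB}$ independent of $y$. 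Your identification of the role of $d\le 4$ (Sobolev embeddings $H^2\hookrightarrow L^4$ and $H^1\hookrightarrow L^4$ making $\nabla^2 y,\nabla^3 y\in L^4$, plus $H^4\hookrightarrow C^1$ so that the pointwise membership condition is meaningful) is also accurate. One small remark worth making explicit in a write-up: the $O(\eps^2)$ mollifier estimate is first established for smooth $f$ via the integral-remainder Taylor expansion and Minkowski's integral inequality, and then extended to $f=\nabla^2 y\in H^2$ (for $p=2$) and $f=\nabla y\in W^{2,4}$ (for $p=4$) by density, which requires the Lipschitz regularity of the domain that the hypotheses provide.
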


\section{Continuum Elastodynamics} \label{sec:cont}

Before we can discuss the atomistic equations, we have to discuss the continuous Cauchy-Born problem. We are not only interested in existence and uniqueness, but also in the maximal existence interval and higher order regularity. The most important part of the result is the short time existence, already contained in \cite{dafermoshrusa85}. But, since we want to discuss the atomistic equations for long times, it is important that extend this short time result to a result about the maximal existence time. This will still require a considerable amount of work. Furthermore, there are two key regularity theorems that are only stated and used in \cite{dafermoshrusa85}. Their proofs were left out by the authors. Since we will also need these statements in our proof directly, we will prove them in the appendix. Theorem \ref{thm:additionalregularity} is about additional regularity for solutions of second order hyperbolic equations, while Theorem \ref{thm:optimalsobolevregularity} is about higher order elliptic regularity under very weak assumptions on the coefficients.

If we want to achieve higher regularity for such a second order hyperbolic initial-boundary-value problem, compatibility conditions on $f,g,h_0,h_1$ are crucial. We say that $f,g,h_0,h_1$ satisfy the compatibility conditions of order $m$, if 
\[ u_k := \frac{\partial^k}{\partial t^k} (y-g) |_{t=0} \in H^1_0(\Omega;\R^d)\]
for all $k \in \{0,\dotsc, m-1\}$ as computed formally using the equation in terms of $f,g,h_0,h_1$. This can be written explicitly and, even though the expressions are quite nasty, we still want to do so in order to be able to discuss some regularity issues in more detail.

If $m-1 > \frac{d}{2}$, $h_0 \in H^m(\Omega; \R^d)$, $h_1 \in H^{m-1}(\Omega; \R^d)$, $\partial_t^{k-2} f(\cdot,0) \in H^{m-k}(\Omega; \R^d)$ for $2 \leq k \leq m-1$, $\partial_t^{k} g(\cdot,0) \in H^{m-k}(\Omega; \R^d)$ for $0 \leq k \leq m-1$ and $W_{\rm CB} \in C^{2m-2}$ on an open set containing $\{\nabla h_0(x) \colon x \in \Omega\}$, then we define $u_0(x) = h_0(x) -g(x,0)$, $u_1(x) = h_1(x) - \dot{g}(x,0)$,
\[u_2(x) = f(x,0) - \ddot{g}(x,0)+ \divo \big( DW_{\rm CB}(\nabla h_0(x)) \big)\]
and recursively, for $3 \leq k \leq m-1$,
\begin{align*}
(u_k(x)&)_i = \partial_t^{k-2} f_i(x,0) - \partial_t^{k} g_i(x,0)\\
&+\sum_{j,q,r=1}^d D^{E_{ij}+E_{qr}}W_{\rm CB}(\nabla h_0(x)) \Big( \partial_{x_r} \partial_{x_j} \partial_t^{k-2} g_q(x,0) + \partial_{x_r} \partial_{x_j} (u_{k-2})_q(x)\Big) \\
&+\sum_{j,q,r=1}^d \sum_{n=1}^{k-2} \sum_{\substack{
            \beta \in \N_0^{d \times d}\\
            1 \leq \vert \beta \rvert \leq n}} \sum_{s=1}^n \sum_{p_s(n,\beta)} \frac{(k-2)!}{(k-2-n)!} D^{\beta+E_{ij}+E_{qr}}W_{\rm CB}(\nabla h_0(x)) \\
&\quad \cdot \Big( \prod_{l=1}^s \frac{(\partial_t^{\gamma_l} \nabla g(x,0) + \nabla u_{\gamma_l}(x))^{\lambda_l}}{\lambda_l! (\gamma_l!)^{\lambda_l}} \Big) \Big( \partial_{x_r} \partial_{x_l} \partial_t^{k-2-n} g_q(x,0) + \partial_{x_r} \partial_{x_l} (u_{k-2-n})_q(x)\Big),
\end{align*}
where
\begin{align*}
p_s(n,\beta)&=\Big\{ (\lambda_1,\dots,\lambda_s;\gamma_1,\dots,\gamma_s) \colon \lambda_l \in \N_0^{d \times d}, \gamma_l \in \N_0,\\
&\quad 0< \gamma_1 < \dots < \gamma_s, \lvert \lambda_l \rvert > 0, \sum_{l=1}^s \lambda_l = \beta, \sum_{l=1}^s \gamma_l \lvert \lambda_l \rvert = n  \Big\},
\end{align*}
and $E_{ij}$ is the matrix with $(E_{ij})_{ij}=1$ and zeros everywhere else.
The following result shows that $u_k \in H^1_0(\Omega;\R^d)$ is only a condition on the boundary values and not on the regularity.

\begin{prop} \label{prop:compatibilitycond1}
If $m-1 > \frac{d}{2}$, $h_0 \in H^m(\Omega; \R^d)$, $h_1 \in H^{m-1}(\Omega; \R^d)$, $\partial_t^{k-2} f(\cdot,0) \in H^{m-k}(\Omega; \R^d)$ for $2 \leq k \leq m-1$, $\partial_t^{k} g(\cdot,0) \in H^{m-k}(\Omega; \R^d)$ for $0 \leq k \leq m-1$ and $W_{\rm CB} \in C^{m}$ on an open set containing $\{\nabla h_0(x) \colon x \in \Omega\}$, then $u_k \in H^{m-k}(\Omega; \R^d)$ for all $0 \leq k \leq m-1$.
\end{prop}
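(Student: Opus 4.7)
My plan is to induct on $k$. The base cases $k = 0, 1$ are immediate from the assumptions: $u_0 = h_0 - g(\cdot,0) \in H^m$ and $u_1 = h_1 - \dot g(\cdot,0) \in H^{m-1}$. For $k = 2$, $u_2 = f(\cdot,0) - \ddot g(\cdot,0) + \divo DW_{\rm CB}(\nabla h_0)$; the first two summands lie in $H^{m-2}$ by hypothesis, while the last equals $D^2 W_{\rm CB}(\nabla h_0) : \nabla^2 h_0$, a product of a composition factor in $H^{m-1}$ (using smoothness of $W_{\rm CB}$ together with $\nabla h_0 \in H^{m-1}$ and $m-1 > d/2$) with $\nabla^2 h_0 \in H^{m-2}$. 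Because $H^{m-1}$ acts as a pointwise multiplier on $H^{m-k}$ for $0 \leq m-k \leq m-1$ whenever $m-1 > d/2$, this product lies in $H^{m-2}$.

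For $k \geq 3$ assume $u_j \in H^{m-j}$ for $j < k$, and decompose the explicit formula for $u_k$ into three pieces. The two linear data terms $\partial_t^{k-2} f(\cdot,0)$ and $\partial_t^k g(\cdot,0)$ are in $H^{m-k}$ by hypothesis. The quadratic term $D^2 W_{\rm CB}(\nabla h_0)$ times $\partial_{x_r}\partial_{x_j}(\partial_t^{k-2} g_q + (u_{k-2})_q)$ is once again the product of a factor in $H^{m-1}$ with a factor in $H^{m-k}$ (using the induction $u_{k-2} \in H^{m-k+2}$), hence is in $H^{m-k}$ by the multiplier property. The remaining Faà-di-Bruno-type sum consists of products of $|\beta|+2$ factors: $D^{\beta+E_{ij}+E_{qr}} W_{\rm CB}(\nabla h_0) \in H^{m-1}$; the factors $\partial_t^{\gamma_l} \nabla g(\cdot,0) + \nabla u_{\gamma_l} \in H^{m-\gamma_l-1}$ with multiplicity $|\lambda_l|$ (using the induction, valid because $\gamma_l \leq n \leq k-2$); and the final factor $\partial_{x_r}\partial_{x_l}(\partial_t^{k-2-n} g_q + (u_{k-2-n})_q) \in H^{m-k+n}$.

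To conclude that each such product lies in $H^{m-k}$ I will invoke the multi-factor Sobolev multiplication lemma collected in the appendix: $H^{s_1} \cdots H^{s_N} \subset H^s$ whenever $s \leq \min_i s_i$ and $\sum_i s_i > s + (N-1) d/2$. Using the index constraints $\sum_l \lambda_l = \beta$ and $\sum_l \gamma_l |\lambda_l| = n$, a short computation gives $\sum_i s_i = (|\beta|+1)(m-1) + (m-k)$, so with $s = m-k$ and $N = |\beta|+2$ the required strict inequality reduces to $(|\beta|+1)(m-1-d/2) > 0$, which holds since $|\beta| \geq 1$ and $m-1 > d/2$. The $\min_i s_i \geq m - k$ condition follows from $\gamma_l \leq k-2$ and $n \geq 1$, and summing finitely many such contributions gives $u_k \in H^{m-k}$. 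The main obstacle is precisely this combinatorial bookkeeping: tracking the admissible $(n,\beta,s,(\lambda_l,\gamma_l))$ and checking that the multiplication lemma closes uniformly across them. The identity between total Sobolev excess and the critical quantity $(|\beta|+1)(m-1-d/2)$ is exactly what makes the scheme work, and the hypothesis $m-1 > d/2$ on $\nabla h_0$ is what simultaneously supplies both the composition rule for $W_{\rm CB}$ and a strictly positive excess in the multiplication estimate.
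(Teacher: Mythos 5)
Your decomposition and bookkeeping match the paper's approach exactly up to the multi-factor product estimate; the identity $\sum_i s_i - s - (N-1)\tfrac{d}{2} = (|\beta|+1)(m-1-\tfrac{d}{2})$ is the correct critical quantity, and the lemma in the appendix (Lemma \ref{lem:prodofsobolev} in the form with $M = m-1$, $N = k-1$) does close the numerology. But there is a genuine gap in the assertion that $D^{\beta+E_{ij}+E_{qr}}W_{\rm CB}(\nabla h_0)\in H^{m-1}$ under the stated hypothesis $W_{\rm CB}\in C^m$. This is a derivative of $W_{\rm CB}$ of order $|\beta|+2$, and composing a $C^{m-1}$ outer function with $\nabla h_0\in H^{m-1}$ (which is the standard requirement to land back in $H^{m-1}$) would force $D^{\beta+E_{ij}+E_{qr}}W_{\rm CB}\in C^{m-1}$, i.e.\ $W_{\rm CB}\in C^{m+|\beta|+1}$, and since $|\beta|$ can be as large as $k-2\le m-3$ this means $W_{\rm CB}\in C^{2m-2}$ --- strictly more than assumed. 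Under the actual hypothesis you only have $D^{\beta+E_{ij}+E_{qr}}W_{\rm CB}\in C^{m-|\beta|-2}$, so the composition lives only in $H^{m-|\beta|-2}$, and re-running your excess computation with $s_1 = m-|\beta|-2$ produces the requirement $m-2>\tfrac{d}{2}$ rather than $m-1>\tfrac{d}{2}$, which is one derivative too strong.

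The way the paper closes this gap is by observing that in the proof of the multiplication lemma one never differentiates a single factor more than $M-N = m-k$ times, so it suffices to control $D^\alpha v$ for $|\alpha|\le m-k$, where $v$ is the composition factor. By Corollary \ref{cor:FaadiBruno}, such a derivative only invokes $D^s W_{\rm CB}$ with $s \le (m-k)+|\beta|+2 \le m$, so $W_{\rm CB}\in C^m$ suffices; the remaining integrability of the products of $D^{l_j+1}h_0$ is then handled by the same Hölder scheme as in Lemma \ref{lem:prodofsobolev}. You should either weaken your statement of the composition regularity and repeat the excess bookkeeping, or insert the paper's observation that the required derivative order of $W_{\rm CB}$ is capped by the target regularity of the product rather than by the regularity of the composition factor alone.
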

\begin{proof}
This is clear for $k=0,1$. For $k=2$ this follows directly from the arguments in Lemma \ref{lem:composition}. If $k \geq 3$, inductively, the same arguments show $D^{\beta+E_{ij}+E_{qr}}W_{\rm CB} \circ \nabla h_0 \in H^{m-1}$ and then one can apply Lemma \ref{lem:prodofsobolev} with $M = m-1$ and $N=0 + \sum \gamma_j \lvert \lambda_j \rvert +(k-l-1) = k-1$ to estimate the product and gives the desired result. Actually, for this to be completely true, we would need the stronger assumption $W_{\rm CB} \in C^{2m-2}$ so that $D^{\beta + E_{ij}+ E_{qr}}W_{\rm CB} \circ \nabla h_0 \in H^{m-1}$. To reduce this assumption to $W_{\rm CB} \in C^{m}$, we note that in the application of Lemma \ref{lem:prodofsobolev} we only take the $\alpha$-th derivative of $v=D^{\beta+E_{ij}+E_{qr}}W_{\rm CB} \circ \nabla h_0$ with $0 \leq \lvert \alpha \rvert \leq m-k =M-N$ and then have to know that $D^\alpha v \in L^q$ for a certain $q$ formerly coming from the Sobolev embedding of $H^{m-1-\lvert \alpha \rvert}$. Now we have to prove this estimate differently.
From Corollary \ref{cor:FaadiBruno} we know that
\[ \lvert D^\alpha v(x) \rvert \leq C\sum_{r=1}^{\lvert \alpha \rvert} \lvert D^{r+2+ \lvert \beta \rvert}W_{\rm CB}(\nabla h_0(x)) \rvert \sum_{\substack{l_1, \dots, l_r\geq 1 \\ l_1+ \dots+ l_r = \lvert \alpha \rvert}} \prod_{j=1}^r \lvert D^{(l_j+1)}h_0(x)\rvert,\]
if $W_{\rm CB} \in C^m$ and $h_0 \in C^{1+\lvert \alpha \rvert}$. But of course this extends to $h_0 \in H^m$ once we estimate the product on the right hand side suitably. These estimates, which also give the desired integrability of $D^\alpha v$ follow along the lines of the proof of Lemma \ref{lem:prodofsobolev}.
\end{proof}

If we already have a solution and use it as a starting point, then the compatibility conditions are automatically satisfied and the $u_k$ are indeed directly given by $y-g$.
\begin{prop} \label{prop:compatibilitycond2}
Let $m\in\N$ with $m > \frac{d}{2} + 1$, $\delta>0$, let $\Omega \subset \R^d$ be an open, bounded set with $\partial \Omega$ of class $C^{m}$, $V \subset \R^{d \times d}$ open, $W_{\rm CB} \in C_b^{m+1}(V)$,
\begin{align*}
&f \in C^{m-1}(\overline{\Omega}\times [-\delta,\delta]; \R^d),\\
&g \in C^{m+1}(\overline{\Omega}\times [-\delta,\delta]; \R^d) \text{, and}\\
&y \in \bigcap_{k=0}^m C^k\big([-\delta,\delta]; H^{m-k}(\Omega;\R^d)\big) \ \text{with}\\
&\overline{\{\nabla y(x,t) \colon  x \in \Omega, t \in [-\delta,\delta]\}} \subset V_{\rm LH}.
\end{align*}
Furthermore let $y$ be a solution of the equations. If we now set $h_0 = y(0)$ and $h_1=\partial_t y(0)$, then we have
\[ u_k = \frac{\partial^k}{\partial t^k} (y-g) |_{t=0} \in H^1_0(\Omega;\R^d)\]
for all $k \in \{0, \dotsc, m-1\}$.
\end{prop}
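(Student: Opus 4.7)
The plan is to split the statement into two independent pieces: verifying the algebraic identity $u_k = \partial_t^k (y-g)(\cdot,0)$ with $u_k$ as defined by the recursion above the proposition, and then deducing $u_k \in H^1_0(\Omega; \R^d)$. The first is a consequence of repeatedly differentiating the PDE in time; the second follows from the Dirichlet condition together with the assumed space-time regularity alone.

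For the identity I would argue by induction on $k$. The cases $k = 0, 1$ are immediate from the hypotheses $h_0 = y(\cdot,0)$ and $h_1 = \partial_t y(\cdot,0)$. For $k = 2$, evaluating the PDE at $t = 0$ gives $\ddot y(\cdot,0) = f(\cdot,0) + \divo DW_{\rm CB}(\nabla h_0)$, so $u_2 = \ddot y(\cdot,0) - \ddot g(\cdot,0)$ matches the given formula. For $k \geq 3$, I would apply $\partial_t^{k-2}$ to both sides of the PDE and commute it with $\divo$ to obtain
\[
\partial_t^k y = \partial_t^{k-2} f + \divo\, \partial_t^{k-2}\bigl[DW_{\rm CB}(\nabla y)\bigr].
\]
Expanding the last term by the Faà di Bruno chain rule applied to the composition $t \mapsto DW_{\rm CB}(\nabla y(x,t))$ produces a sum over partitions $(\lambda_1, \dotsc, \lambda_s; \gamma_1, \dotsc, \gamma_s)$ that matches the index set $p_s(n,\beta)$. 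The contributions in which all $k-2$ time derivatives land on a single copy of $\nabla y$ reproduce the second line of the recursion (after the spatial $\divo$ acts on $\partial_t^{k-2}\nabla y$ and produces the $\partial_{x_r} \partial_{x_j}$ factor), while the remaining contributions produce the third line. Using the inductive hypothesis $\partial_t^{\gamma_l} \nabla y(\cdot, 0) = \partial_t^{\gamma_l} \nabla g(\cdot, 0) + \nabla u_{\gamma_l}$ for $\gamma_l < k$ to rewrite the time derivatives of $y$ in terms of $g$ and previously constructed $u$'s, and subtracting $\partial_t^k g(\cdot,0)$ from both sides, yields the claimed expression for $u_k$.

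For the $H^1_0$-membership I would argue inductively on $k \leq m-1$ that $\partial_t^k(y-g)(\cdot,t) \in H^1_0(\Omega;\R^d)$ for every $t \in [-\delta,\delta]$. At $k=0$, $(y-g)(\cdot,t) \in H^m(\Omega;\R^d)$ has zero trace on $\partial\Omega$ by the condition $y = g$ on $\partial\Omega\times[-\delta,\delta]$, and hence lies in $H^1_0$. For the step from $k-1$ to $k$, the regularity $y - g \in C^k([-\delta,\delta]; H^{m-k}(\Omega;\R^d))$ (inherited from the assumed regularity of $y$ and $g$) says that $\partial_t^{k-1}(y-g)$ is $C^1$ in $t$ with values in $H^{m-k}$. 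Since $m - k \geq 1$, the embedding $H^{m-k} \hookrightarrow H^1$ is continuous, so the difference quotients of $\partial_t^{k-1}(y-g)$, which lie in $H^1_0$ by the inductive hypothesis, converge in $H^1$ to $\partial_t^k(y-g)(\cdot,t)$. Closedness of $H^1_0$ in $H^1$ then forces the limit into $H^1_0$.

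The main obstacle I expect is the combinatorial Faà di Bruno bookkeeping in the first step: lining up the partition index set $p_s(n,\beta)$ with the expansion produced by repeatedly differentiating $DW_{\rm CB}(\nabla y)$ in time and tracking how the outer spatial $\divo$ redistributes into the $\partial_{x_r}$ and $\partial_{x_j}$ factors appearing in the formula. This is formal but lengthy; pointwise a.e.\ validity of each term is already guaranteed by the Sobolev embedding considerations used in Proposition \ref{prop:compatibilitycond1}.
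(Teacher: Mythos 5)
Your proposal is correct and follows essentially the same route as the paper: both verify the recursive formula $u_k = \partial_t^k(y-g)|_{t=0}$ by repeatedly differentiating the PDE in time via the multivariate Faà di Bruno formula (Lemma \ref{lem:multivariateFaaDiBruno}) and justifying the expansion under weak regularity through Proposition \ref{prop:compatibilitycond1} and Lemma \ref{lem:prodofsobolev}, and both obtain $H^1_0$-membership from the fact that $y-g$ takes values in the closed subspace $H^1_0 \subset H^1$ and is $C^{m-1}$ in $t$ with values in $H^1$, so its time derivatives at $t=0$ up to order $m-1$ remain in $H^1_0$. The only difference is the order of presentation (you treat the algebraic identity first, the paper treats the $H^1_0$-membership first), which is immaterial.
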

\begin{proof}
Since $y-g \in C^{m-1}\big([-\delta,\delta]; H^1(\Omega;\R^d)\big)$, $H^1_0(\Omega;\R^d)$ is a closed subspace of $H^1(\Omega;\R^d)$ and $y(t)-g(t) \in H^1_0(\Omega;\R^d)$ for all $t \in [- \delta, \delta]$, we clearly find
\[\frac{\partial^k}{\partial t^k} (y-g) |_{t=0} \in H^1_0(\Omega;\R^d).\]

Let us now proof $u_k = \frac{\partial^k}{\partial t^k} (y-g) |_{t=0}$ by induction over $k$. By definition this is true for $k=0,1$. $k=2$ follows from the equation. If now $3 \leq k \leq m-1$, we have to show that the recursion formula for the $u_k$ also holds for the derivatives of $y-g$. Clearly we have
\begin{align*}
\partial_t^k (y-g) &=  \partial_t^{k-2} \Big( f - \partial_t^2 g + \divo (DW_{\rm CB}(\nabla y))\Big)\\
&= \partial_t^{k-2} f - \partial_t^k g + \partial_t^{k-2} \divo (DW_{\rm CB}(\nabla y)).
\end{align*}
If $y$ were smooth the last term can be written explicitly with the chain rule, the Leibniz rule, as well as the generalized Faà di Bruno formula, Lemma \ref{lem:multivariateFaaDiBruno}. We first get
\[\divo (DW_{\rm CB}(\nabla y))_i = \sum_{j,q,r=1}^d D^{E_{ij}+E_{qr}}W_{\rm CB}(\nabla y) \partial_{x_r} \partial_{x_j} y_q, \]
then
\begin{align*}
\partial_t^{k-2} \divo (DW_{\rm CB}(\nabla y))_i = \sum_{j,q,r=1}^d \sum_{n=0}^{k-2} \binom{k-2}{n} \partial_t^n(D^{E_{ij}+E_{qr}}W_{\rm CB}(\nabla y)) \partial_t^{k-2-n} \partial_{x_r} \partial_{x_j} y_q,
\end{align*}
and finally
\begin{align*}
(\partial_t^k (y-g))_i &= \partial_t^{k-2} f_i - \partial_t^{k} g_i\\
&+\sum_{j,q,r=1}^d D^{E_{ij}+E_{qr}}W_{\rm CB}(\nabla y) \partial_{x_r} \partial_{x_j} \partial_t^{k-2} y_q \\
&+\sum_{j,q,r=1}^d \sum_{n=1}^{k-2} \sum_{\substack{
            \beta \in \N_0^{d \times d}\\
            1 \leq \vert \beta \rvert \leq n}} \sum_{s=1}^n \sum_{p_s(n,\beta)} \frac{(k-2)!}{(k-2-n)!} D^{\beta+E_{ij}+E_{qr}}W_{\rm CB}(\nabla y) \\
            &\quad \cdot \Big( \prod_{l=1}^s \frac{(\partial_t^{\gamma_l} \nabla y)^{\lambda_l}}{\lambda_l! (\gamma_l!)^{\lvert \lambda_l \rvert}} \Big) \partial_{x_r} \partial_{x_l} \partial_t^{k-2-n} y_q,
\end{align*}
where
\begin{align*}
p_s(n,\beta)&=\Big\{ (\lambda_1,\dots,\lambda_s;\gamma_1,\dots,\gamma_s) \colon \lambda_l \in \N_0^{d \times d}, \gamma_l \in \N_0,\\
&\quad 0< \gamma_1 < \dots < \gamma_s, \lvert \lambda_l \rvert > 0, \sum_{l=1}^s \lambda_l = \beta, \sum_{l=1}^s \gamma_l \lvert \lambda_l \rvert = n  \Big\}.
\end{align*}
Due to the bounds discussed in Proposition \ref{prop:compatibilitycond1} and Lemma \ref{lem:prodofsobolev} this still holds under the given weaker regularity assumption on $y$. Inductively, we thus have proven the claim.
\end{proof}

In the following, for $V \subset \R^{d \times d}$ open and $W_{\rm CB} \in C^2(V)$ we write
\[V_{\rm LH} = \{A \in V \colon \lambda_{\rm LH}(A)>0\},\]
which is again an open set, since $\lambda_{\rm LH}$ is continuous.

Let us start with a local existence result.
\begin{thm} \label{thm:localcontwave}
Let $m\in\N$ with $m > \frac{d}{2} +2$, $T_0>0$, let $\Omega \subset \R^d$ be an open, bounded set with $\partial \Omega$ of class $C^{m}$, $V \subset \R^{d \times d}$ open and $W_{\rm CB} \in C_b^{m+1}(V)$. Given a body force $f$, initial data $h_0,h_1$ and boundary values $g$ such that
\begin{align*}
&f \in C^{m-1}(\overline{\Omega}\times [0,T_0]; \R^d)\\
&g \in C^{m+1}(\overline{\Omega}\times [0,T_0]; \R^d)\\
&h_0 \in H^m (\Omega; \R^d)\\
&\overline{\{\nabla h_0(x) \colon  x \in \Omega\}} \subset V_{\rm LH} \\
&h_1 \in H^{m-1} (\Omega; \R^d)
\end{align*}
and such that the compatibility conditions of order $m$ are satisfied (see above).

Then, for all sufficiently small $T \in (0,T_0]$ the problem has a unique solution
\[ y \in \bigcap_{k=0}^m C^k\big([0,T]; H^{m-k}(\Omega;\R^d)\big)\]
and we have
\[\overline{\{\nabla y(x,t) \colon  x \in \Omega, t \in [0,T]\}} \subset V_{\rm LH}.\]
\end{thm}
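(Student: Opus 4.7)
The natural starting point is to reduce to homogeneous Dirichlet data by setting $u = y - g$, so that $u$ satisfies the quasilinear second-order hyperbolic system
\[
\ddot u - \divo\bigl(DW_{\rm CB}(\nabla u + \nabla g)\bigr) = f - \ddot g, \qquad u|_{\partial \Omega} = 0,
\]
with initial data $u(0) = u_0 = h_0 - g(\cdot,0) \in H^m \cap H^1_0$, $\dot u(0) = u_1 \in H^{m-1} \cap H^1_0$, the $H^1_0$-membership being exactly what the compatibility conditions of order $m$ provide (cf.\ Proposition \ref{prop:compatibilitycond1} for the higher $u_k$). Since $V_{\rm LH}$ is open and $\overline{\{\nabla h_0(x) : x \in \Omega\}}$ is a compact subset of it, I fix an intermediate open set $V'$ with $\overline{\{\nabla h_0\}} \subset V' \css V_{\rm LH}$ and aim to keep $\nabla y(\cdot,t)$ inside $V'$ on a short time interval; continuity of $\lambda_{\rm LH}$ and of $\nabla y$ in $t$ will then give $\nabla y \in V_{\rm LH}$ on $[0,T]$.

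For existence I would run a Picard iteration. Given an iterate $v^{(n)}$ with $\nabla v^{(n)} + \nabla g \in V'$, solve the \emph{linear} system
\[
\ddot u - \divo\bigl(A^{(n)}(x,t) \nabla u\bigr) = F, \qquad A^{(n)} = D^2 W_{\rm CB}\bigl(\nabla v^{(n)} + \nabla g\bigr),
\]
with homogeneous Dirichlet boundary data and the prescribed initial data. On $V'$ the Legendre--Hadamard condition holds uniformly, so classical theory for linear second-order hyperbolic systems with smooth time-dependent coefficients applies, and Theorem \ref{thm:additionalregularity} from the appendix produces $u^{(n+1)} \in \bigcap_{k=0}^{m} C^k([0,T]; H^{m-k}(\Omega;\R^d))$ with energy bounds in terms of the data and of suitable Sobolev norms of $A^{(n)}$.

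The heart of the matter is to prove, on a uniform short interval $[0,T]$, an $n$-uniform bound in the solution space. For each $k \leq m-1$ one differentiates the equation $k$ times in $t$, tests against $\partial_t^{k+1} u^{(n+1)}$, and extracts a positive principal part from the Legendre--Hadamard condition modulo an $L^2$ remainder (a continuum Gårding inequality), absorbing the latter by a Gronwall argument. The commutators and forcing terms that appear are products of time-and-space derivatives of $u^{(n+1)}$ and of $v^{(n)} + g$, estimated by combining the composition bound for $DW_{\rm CB} \circ \nabla(\cdot)$ (Lemma \ref{lem:composition}) with the multi-factor Sobolev product bound (Lemma \ref{lem:prodofsobolev}) and the embedding $H^{m-1} \hookrightarrow L^\infty$ (which is the reason for assuming $m > d/2 + 2$). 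The initial values $\partial_t^k u^{(n+1)}(0) = u_k$ are independent of $n$, lie in $H^{m-k}$ by Proposition \ref{prop:compatibilitycond1}, and by the compatibility conditions fulfil the boundary conditions needed to start the hyperbolic estimate. Choosing $T$ small enough depending only on the initial data, the iteration maps a ball in $\bigcap_k C^k([0,T]; H^{m-k})$ into itself while keeping $\nabla v^{(n)} + \nabla g$ inside $V'$.

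Contraction in the weaker norm of $C([0,T];H^1)\cap C^1([0,T];L^2)$ --- proved by writing the equation for $u^{(n+1)} - u^{(n)}$ and running a basic energy estimate --- yields a limit $u$; weak-$*$ compactness of the bounded high-regularity sequence promotes $u$ to the desired class, and lower semicontinuity preserves the bound. Uniqueness follows from the same energy identity applied to the difference of two solutions. Setting $y = u + g$ finishes the construction, and $\nabla y \in V_{\rm LH}$ on $[0,T]$ is built into the iteration. The \textbf{main obstacle} is the combined bookkeeping of nonlinearity and regularity: the composite $DW_{\rm CB}(\nabla y)$ must be differentiated up to $m-1$ times in $t$ with every intermediate product controlled in $L^2$, and it is precisely this which forces the use of Lemma \ref{lem:prodofsobolev} together with the Faà di Bruno-style expansions written out in Propositions \ref{prop:compatibilitycond1} and \ref{prop:compatibilitycond2}; once this hierarchy closes, short-time existence reduces to standard hyperbolic theory in the spirit of \cite{dafermoshrusa85}.
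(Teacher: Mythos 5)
Your proposal is correct in outline, but it takes a markedly more self-contained route than the paper. The paper's proof is a direct reduction to \cite[Thm.~5.1]{dafermoshrusa85}: after the same shift $u = y-g$, it inserts a smooth cutoff $\chi$ into the coefficient tensor $\textbf{A}_{ij}$ so that the quasilinear system has globally defined, uniformly Legendre--Hadamard-positive coefficients that coincide with $D^2W_{\rm CB}$ on an intermediate set $W_1 \css W_2 \css V_{\rm LH}$; one then cites the Dafermos--Hrusa short-time theorem, sets $y = u+g$, and shrinks $T$ so that $\nabla y$ stays in $W_1$, making the cutoff irrelevant. By contrast, you bypass the cutoff by keeping the Picard iterates in an intermediate set $V' \css V_{\rm LH}$, and then you essentially re-derive the internal machinery of the cited theorem (linearize, appeal to Theorem \ref{thm:additionalregularity} for the linear solver, close the higher-order energy estimates via Gårding and Gr\"onwall, contract in the weak norm, extract a weak-$*$ limit). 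This buys you a self-contained argument at the cost of substantially more bookkeeping, and it duplicates work that the paper deliberately delegates to the literature. One point worth tightening: weak-$*$ compactness only gives $u \in L^\infty(0,T;H^m) \cap W^{1,\infty}(0,T;H^{m-1})$ etc., not the claimed $C^k$-in-time continuity; the upgrade to strong continuity in each $H^{m-k}$ is precisely what Theorem \ref{thm:additionalregularity} (and the inductive elliptic-regularity bootstrap) is for, and has to be applied to the limit $u$ itself, not just to the linear iterates. In the paper this whole issue is absorbed into the cited theorem; in your route it needs to be spelled out, as it is in the proof of Theorem \ref{thm:maxcontwave}.
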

\begin{proof}
This follows from \cite[Thm.~5.1]{dafermoshrusa85} by setting $\textbf{u}^0 = h_0-g(\cdot,0)$, $\textbf{u}^1 = h_1-\dot{g}(\cdot,0)$,
\[ \textbf{g}(x,t,u,p,M)_k = f(x,t)_k - \ddot{g}(x,t)_k + \sum \frac{\partial^2 W_{\rm CB}}{\partial a_{ki} \partial a_{lj}}(M+\nabla g(x,t)) \frac{\partial^2 g_l}{\partial x_i \partial x_j}(x,t)\]
and
\[ (\textbf{A}_{ij})_{kl} (x,t,u,p,M) = \chi (M+\nabla g(x,t))\frac{\partial^2 W_{\rm CB}}{\partial a_{ki} \partial a_{lj}}(M+\nabla g(x,t)) + (1-\chi(M+\nabla g(x,t))) \delta_{kl} \delta_{ij},\]
where $\chi \colon \R^{d \times d} \to [0,1]$ is a smooth cutoff with $\chi(M)=1$ for $M \in W_1$ and $\chi(M)=0$ for $M \notin W_2$ and $W_1,W_2$ are open sets such that
\[\overline{\{\nabla h_0(x) \colon  x \in \Omega\}} \css W_1 \css W_2 \css V_{\rm LH}.\]
We then set $y = u+g$ and reduce the existence time $T$ enough to ensure $\nabla y(x,t) \in W_1$ for all $(x,t) \in \overline{\Omega} \times [0,T]$.
\end{proof}

We can use this local result to construct a maximal solution.
\begin{thm} \label{thm:maxcontwave}
Let $m\in\N$ with $m > \frac{d}{2} +2$, $T_0>0$, let $\Omega \subset \R^d$ be an open, bounded set with $\partial \Omega$ of class $C^{m}$, $V \subset \R^{d \times d}$ open and $W_{\rm CB} \in C^{m+1}(V)$. Given a body force $f$, initial data $h_0,h_1$ and boundary values $g$ such that
\begin{align*}
&f \in C^{m-1}(\overline{\Omega}\times [0,T_0]; \R^d)\\
&g \in C^{m+1}(\overline{\Omega}\times [0,T_0]; \R^d)\\
&h_0 \in H^m (\Omega; \R^d)\\
&\overline{\{\nabla h_0(x) \colon  x \in \Omega\}} \subset V_{\rm LH} \\
&h_1 \in H^{m-1} (\Omega; \R^d)
\end{align*}
and such that the compatibility conditions of order $m$ are satisfied.

Then there are unique $T_{\rm cont} > 0$ and
\[ y \in \bigcap_{k=0}^m C^k\big([0,T_{\rm cont}); H^{m-k}(\Omega;\R^d)\big),\]
such that
\[\{\nabla y(x,t) \colon  x \in \overline{\Omega}, t \in [0,T_{\rm cont})\} \subset V_{\rm LH},\]
$y$ is a solution on $[0,T_{\rm cont})$ and at least one of the following conditions is true:
\begin{enumerate}[label=(\roman*)]
\item $T_{\rm cont}=T_0$,
\item $\liminf_{t \to T_{\rm cont}}\dist \big(V_{\rm LH}^c, \{\nabla y(x,t) \colon  x \in \overline{\Omega}\}\big) = 0$,
\item $\limsup_{t \to T_{\rm cont}} \lVert y(t) \rVert_{H^m(\Omega;\R^d)} + \lVert \partial_t y(t) \rVert_{H^{m-1}(\Omega;\R^d)} = \infty$.
\end{enumerate}
\end{thm}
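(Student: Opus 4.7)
The plan is to define $T_{\rm cont}$ as the supremum of local existence times furnished by Theorem \ref{thm:localcontwave}, to glue the associated solutions together via a uniqueness principle into a single maximal solution, and then to argue by contradiction that if none of (i)--(iii) holds one can continue past $T_{\rm cont}$.

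First I would establish a global uniqueness statement: two solutions of the prescribed regularity with gradients in $V_{\rm LH}$ that share $f,g,h_0,h_1$ must coincide on the intersection of their intervals of existence. The set of times at which two such solutions agree is closed by continuity and nonempty (containing $0$); to see openness, at any point $t_0$ of agreement one uses the common values $y(t_0)$, $\partial_t y(t_0)$ as new initial data. Proposition \ref{prop:compatibilitycond2} ensures the compatibility conditions of order $m$ are automatically satisfied at $t_0$, and Theorem \ref{thm:localcontwave} applied on the shifted interval then forces agreement on a whole neighborhood past $t_0$.

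With uniqueness in hand I set
\[T_{\rm cont} := \sup\bigl\{T \in (0,T_0] : \text{the problem admits a solution of the required type on } [0,T] \text{ with } \nabla y \in V_{\rm LH}\bigr\},\]
which is positive by Theorem \ref{thm:localcontwave}, and I patch all the local solutions into one $y \in \bigcap_{k=0}^m C^k([0,T_{\rm cont}); H^{m-k}(\Omega;\R^d))$ with $\nabla y(\cdot,t) \in V_{\rm LH}$ for every $t \in [0,T_{\rm cont})$. For the dichotomy, assume $T_{\rm cont} < T_0$ and that (ii) and (iii) both fail. Then there exist $\delta,M>0$ and a left-neighborhood $[T_*, T_{\rm cont})$ of $T_{\rm cont}$ on which
\[\dist\bigl(V_{\rm LH}^c, \{\nabla y(x,t) : x \in \overline{\Omega}\}\bigr) \geq \delta, \qquad \lVert y(t) \rVert_{H^m} + \lVert \partial_t y(t) \rVert_{H^{m-1}} \leq M.\]
Picking $t_0 \in [T_*, T_{\rm cont})$ and restarting Theorem \ref{thm:localcontwave} at $t_0$ with $h_0 := y(t_0)$, $h_1 := \partial_t y(t_0)$ (compatibility automatic by Proposition \ref{prop:compatibilitycond2}) yields a local solution on some $[t_0, t_0 + \tau]$. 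Provided $\tau$ can be chosen independently of $t_0$, taking $t_0$ with $T_{\rm cont}-t_0 < \tau/2$ extends $y$ past $T_{\rm cont}$ by the uniqueness above, contradicting the definition of $T_{\rm cont}$.

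The main obstacle is precisely the uniformity of the local lifespan $\tau$ as $t_0 \uparrow T_{\rm cont}$. This is not abstract but requires tracking through the proof of \cite[Thm.~5.1]{dafermoshrusa85}: the cutoff construction in the proof of Theorem \ref{thm:localcontwave} can be performed with fixed auxiliary sets $W_1 \css W_2 \css V_{\rm LH}$ chosen once and for all to contain a $\delta/2$-neighborhood of $\bigcup_{t \in [0,T_{\rm cont})}\{\nabla y(x,t) : x \in \overline{\Omega}\}$, and the ensuing fixed-point estimates then provide a lifespan depending only on $M$, $\delta$, the fixed $W_1,W_2$, and the norms of $f,g$ on $[0,T_0]$. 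Once this uniform dependence is read off from the Dafermos--Hrusa estimates, the contradiction closes the proof.
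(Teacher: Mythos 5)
Your proof is a genuinely different route from the paper's, and both are viable, but yours rests on a step that you correctly flag as ``the main obstacle'' and do not actually carry out; it is worth being precise about what is elided.

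The paper does \emph{not} restart the local existence theorem at a time $t_0 < T_{\rm cont}$ and appeal to a uniform lifespan. Instead it first shows $\partial_t^k y \in L^\infty(0,T_{\rm cont}; H^{m-k})$ for all $k \le m$ by bootstrapping through the equation, then extracts limits $\tilde h_k = \lim_{t \to T_{\rm cont}} \partial_t^k y(t)$ (strong in $H^{m-k-1}$, weak in $H^{m-k}$), verifies the compatibility conditions of order $m$ at $T_{\rm cont}$ via a careful weak-to-strong continuity argument (Lemma \ref{lem:prodofsobolev}), and applies Theorem \ref{thm:localcontwave} at $T_{\rm cont}$ itself. The price of this is that the glued solution is a priori only $W^{k,\infty}$ in time at the joint, and the paper then spends most of the proof upgrading this to $C^k$ continuity via Strauss-type arguments, Theorem \ref{thm:additionalregularity}, and the optimal elliptic regularity Theorem \ref{thm:optimalsobolevregularity}. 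Your scheme of restarting strictly before $T_{\rm cont}$ sidesteps all of that regularity work entirely: the new local solution and the old one are both fully $C^k$ where they overlap, so the patch is automatically smooth.

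What you buy, you pay for elsewhere. Theorem \ref{thm:localcontwave} as stated gives only ``for all sufficiently small $T$''; it is not a quantitative statement. To get a lifespan $\tau$ uniform in $t_0 \uparrow T_{\rm cont}$ you must re-open the Dafermos--Hrusa proof and verify that their fixed-point interval is controlled by the quantities you list. This is plausible but is not something you can cite; the paper deliberately treats \cite[Thm.~5.1]{dafermoshrusa85} as a black box to avoid this. Moreover, your list of controlling quantities ($M$, $\delta$, $W_1,W_2$, norms of $f,g$) is incomplete: the Dafermos--Hrusa lifespan also depends on the higher compatibility data $u_k(t_0) = \partial_t^k y(t_0)$ in $H^{m-k}$ for $k$ up to $m-1$, and bounding these uniformly in $t_0$ requires exactly the bootstrap $\partial_t^k y \in L^\infty(0,T_{\rm cont}; H^{m-k})$ that the paper carries out via the arguments of Proposition \ref{prop:compatibilitycond1}. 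So the two routes share the bootstrap step; after that, yours trades the paper's regularity-at-the-joint argument for a quantitative reading of Dafermos--Hrusa. If you supply both the bootstrap and the uniform-lifespan extraction, your argument closes; as written, those are stated as obstacles rather than resolved.
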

\begin{proof}
Let $T_{\rm cont}$ be the supremum of all $0<T \leq T_0$ such that there is a
\[ y \in \bigcap_{k=0}^m C^k\big([0,T]; H^{m-k}(\Omega;\R^d)\big)\]
with
\[\overline{\{\nabla y(x,t) \colon  x \in \Omega, t \in [0,T]\}} \subset V_{\rm LH}\]
that solves the problem on $[0,T]$. Theorem \ref{thm:localcontwave} ensures that there is at least one such $T$. If we take any of these solutions and $t_0 \in (0,T)$ then $h_0=y(t_0)$, $h_1= \partial_t y(t_0)$ as well as the translated $f$ and $g$ can be used in Theorem \ref{thm:localcontwave} to show existence and uniqueness in some $[t_0, t_0+\delta]$, $\delta>0$. This is possible since the new $u_k$ satisfies $u_k =  \frac{\partial^k}{\partial t^k} (y-g) |_{t=t_0} \in H^1_0$ by Proposition \ref{prop:compatibilitycond2}.

The uniqueness ensures in particular, that all these $y$ are equal pairwise on the intersection of their existence intervals. Therefore, we have a
\[ y \in \bigcap_{k=0}^m C^k\big([0,T_{\rm cont}); H^{m-k}(\Omega;\R^d)\big)\]
with
\[\{\nabla y(x,t) \colon  x \in \overline{\Omega}, t \in [0,T_{\rm cont})\} \subset V_{\rm LH}\]
that solves the problem on $[0,T_{\rm cont})$.

Now assume $T_{\rm cont} < T_0$,
\[\liminf_{t \to T_{\rm cont}}\dist \big(V_{\rm LH}^c, \{\nabla y(x,t) \colon  x \in \overline{\Omega}\}\big)>0\]
and
\[\limsup_{t \to T_{\rm cont}} \lVert y(t) \rVert_{H^m(\Omega;\R^d)} + \lVert \partial_t y(t) \rVert_{H^{m-1}(\Omega;\R^d)} < \infty.\]
Then
\[y \in L^\infty(0,T_{\rm cont}; H^m(\Omega;\R^d)) \cap W^{1,\infty}(0,T_{\rm cont}; H^{m-1}(\Omega;\R^d)).\]
We claim that
\[\partial_t^k y \in L^\infty(0,T_{\rm cont}; H^{m-k}(\Omega;\R^d))\]
for $0 \leq k \leq m$. We already know this $k=0,1$. For $2 \leq k \leq m$ we represent the derivatives of $y$ as we did in Proposition \ref{prop:compatibilitycond2} and then argue inductively as in the proof of Proposition \ref{prop:compatibilitycond1}.

In particular, the limit $\tilde{h}_k := \lim_{t \to T_{\rm cont}} \partial_t^k y (t)$ exists strongly in $H^{m-k-1}(\Omega;\R^d)$ and weakly in $H^{m-k}(\Omega;\R^d)$ for $0 \leq k \leq m-1$. Since $H^{m-1}(\Omega;\R^d) \hookrightarrow C^1(\overline{\Omega};\R^d)$ we also have the convergence $y(t) \to \tilde{h}_0 $ in $C^1(\overline{\Omega};\R^d)$. In particular,
\[\overline{\{\nabla \tilde{h}_0(x) \colon  x \in \Omega\}} \subset V_{\rm LH}.\]
Now we want to use the local existence result, Theorem \ref{thm:localcontwave}, with shifted $f,g$ and initial conditions $\tilde{h}_0,\tilde{h}_1$. All we have to do, is to check that the compatibility conditions of order $m$ are satisfied. For $k=0$ or $k=1$, we clearly have
\[u_k = \tilde{h}_k - \partial_t^k g(\cdot,T_{\rm cont}) = \lim_{t \to T_{\rm cont}} \partial_t^k( y(\cdot,t) - g(\cdot,t)) \in H^1_0.\]
For $2 \leq k \leq m-1$, we know that $\partial_t^k(y-g)(t)$ converges to $\tilde{h}_k - \partial_t^k g(\cdot,T_{\rm cont})$ strongly in $H^{m-k-1}(\Omega;\R^d)$ and weakly in $H^{m-k}(\Omega;\R^d)$. Therefore, $\tilde{h}_k - \partial_t^k g(\cdot,T_{\rm cont})\in H^1_0(\Omega;\R^d)$. Now we just have to argue inductively that $u_k = \tilde{h}_k - \partial_t^k g(\cdot,T_{\rm cont})$. If this is already true for all $l<k$, we know in particular that $\partial_t^l(y-g)(t) \wto u_l$ in $H^{m-l}(\Omega;\R^d)$. Expressing $\partial_t^k(y-g)(t)$ with the equation in terms of $\partial_t^l (y-g)$, $0 \leq l \leq k-2$ as in Proposition \ref{prop:compatibilitycond2}, we can thus conclude that $\partial_t^k(y-g)(t) \to u_k$ at least in some weaker sense, e.g. in $H^{m-k-1}(\Omega;\R^d)$. To see this one needs to combine the arguments in Proposition \ref{prop:compatibilitycond1} with the statement on weak-to-strong continuity in Lemma \ref{lem:prodofsobolev} with $M=m-1$, $N=k-1$, $L=m-k-1$. Therefore, $f(\cdot, T_{\rm cont} + \cdot)$, $g(\cdot, T_{\rm cont} + \cdot)$, $\tilde{h}_0$, and $\tilde{h}_1$ satisfy the compatibility conditions of order $m$.

Hence, we can use Theorem \ref{thm:localcontwave} to find a $\delta>0$ and an extension of $y$ to $[0, T_{\rm cont}+\delta]$, such that
\[ y \in \bigcap_{k=0}^m C^k\big([T_{\rm cont},T_{\rm cont}+\delta]; H^{m-k}(\Omega;\R^d)\big),\]
$y$ is a solution of the equation on $(T_{\rm cont},T_{\rm cont}+\delta)$ with $y(T_{\rm cont})= \tilde{h}_0$ and $\dot{y}(T_{\rm cont})= \tilde{h}_1$. Here, $\dot{y}(T_{\rm cont})$ is to be understood in terms of the values on $[T_{\rm cont},T_{\rm cont}+\delta]$ alone. Furthermore, we have
\[\overline{\{\nabla y(x,t) \colon  x \in \Omega, t \in [T_{\rm cont},T_{\rm cont}+\delta]\}} \subset V_{\rm LH}.\]
We have to take a closer look at what happens in $T_{\rm cont}$. We clearly have 
\[u_k = \lim_{t \to T_{\rm cont}^+} \partial_t^k( y(\cdot,t) - g(\cdot,t))\]
strongly in $H^{m-k}(\Omega;\R^d)$. But we already saw that $u_k = \tilde{h}_k - \partial_t^k g(\cdot,T_{\rm cont})$ for $0 \leq k \leq m-1$. So the weak derivatives are continuous, which directly implies the strong differentiability
\[ y \in \bigcap_{k=0}^{m-1} C^k\big([0,T_{\rm cont}+\delta]; H^{m-k-1}(\Omega;\R^d)\big).\]
Furthermore, we have one more strong derivative outside of $T_{\rm cont}$ which extends to the entire interval including $T_{\rm cont}$ as a weak derivative. By continuity it is bounded on $[T_{\rm cont},T_{\rm cont} + \delta]$ and we have already shown the boundedness on $[0,T_{\rm cont})$. Therefore,
\[ y \in \bigcap_{k=0}^{m} W^{k,\infty}\big([0,T_{\rm cont}+\delta]; H^{m-k}(\Omega;\R^d)\big).\]
Additionally, by compactness and identification $\partial_t^k y$ is continuous in $H^{m-k}(\Omega;\R^d)$ with respect to the weak topology for all $0 \leq k \leq m-1$.

Now, we want to use the ideas of \cite{strauss66} to get the missing additional regularity. The key is to use that $y$ solves an equation.

Clearly $v:= \partial_t^{m-1} (y-g)$ satisfies $v \in L^\infty(0,T_{\rm cont}+\delta; H^1_0(\Omega;\R^d))$ with weak derivative $\partial_t v \in L^\infty(0,T_{\rm cont}+\delta; L^2(\Omega;\R^d))$. We claim that it also has a weak second derivative in $L^\infty(0,T_{\rm cont}+\delta; H^{-1}(\Omega;\R^d))$. To that end, we calculate
\begin{align*}
\partial_t^{m-1} (DW_{\rm CB} (\nabla y)_{ij}) &= D^2 W_{\rm CB} (\nabla y)[\nabla v, E_{ij}] + D^2 W_{\rm CB} (\nabla y)[\partial_t^{m-1} \nabla g, E_{ij}]\\
&+ \sum_{\substack{
            \beta \in \N_0^{d \times d}\\
            2 \leq \vert \beta \rvert \leq m-1}}
            \sum_{s=1}^{m-1} \sum_{p_s(m-1,\beta)} (m-1)! D^{\beta+E_{ij}}W_{\rm CB}(\nabla y) \prod_{j=1}^s \frac{(\partial_t^{\gamma_j}\nabla y)^{\lambda_j}}{\lambda_j! \gamma_j!^{\lvert \lambda_j \rvert}}
            \\
            &=: D^2 W_{\rm CB} (\nabla y)[\nabla v, E_{ij}]+ R_{ij}.
\end{align*}
We can now use Lemma \ref{lem:prodofsobolev} with $M=m-2$ and $N = \sum \lvert \lambda_j \rvert (\gamma_j -1) = m-1-\lvert \beta \rvert \leq m-3$ to see that
\[\prod_{j=1}^s (\partial_t^{\gamma_j}\nabla y)^{\lambda_j} \in L^\infty (0,T_{\rm cont}+\delta; H^1(\Omega;\R^d)).\]
Since
\[D^{\beta+E_{ij}}W_{\rm CB}(\nabla y) \in L^\infty (0,T_{\rm cont}+\delta; W^{1,\infty}(\Omega;\R^d)),\]
we obtain
\[R \in L^\infty (0,T_{\rm cont}+\delta; H^1(\Omega;\R^{d \times d}))\]
and
\[F:= \partial_t^{m-1}f - \partial_t^{m+1}g + \divo R \in L^\infty (0,T_{\rm cont}+\delta; L^2(\Omega;\R^d)).\]
Defining $A(t) : H^1_0(\Omega;\R^d) \to H^{-1}(\Omega;\R^d)$ by
\[A(t)u = -\divo(D^2 W_{\rm CB} (\nabla y(\cdot,t))[\nabla u]),\]
we can use a weak formulation (in time and space) of the equation to see that indeed $\partial_t^2 v$ exists as a weak derivative in $L^\infty (0,T_{\rm cont}+\delta; H^{-1}(\Omega;\R^d))$ and satisfies
\[\partial_t^2 v(t) + A(t)v(t) = F(t).\]
Let us look more precisely at $A$. Since $\nabla y \in C([0,T_{\rm cont}+\delta] \times \overline{\Omega}; \R^{d \times d})$, the coefficients $D^2 W_{\rm CB} (\nabla y(x,t))$ are uniformly bounded, uniformly continuous and have a positive, uniform Legendre-Hadamard constant. Therefore, it is well known that $A(t)$ satisfies a Gårding-inequality uniformly in time, see Theorem \ref{thm:contGårding}. I.e., there are $\lambda_1 >0$, $\lambda_2 \in \R$ such that
\[\langle A(t)v,v\rangle_{H^{-1},H^1_0} \geq \lambda_1 \lVert v \rVert_{H^1_0} - \lambda_2 \lVert v \rVert_{L^2}\]
for all $t$ and all $v \in H^1_0(\Omega;\R^d)$. Given $v_1,v_2 \in H^1_0(\Omega;\R^d)$, $\langle A(t)v_1,v_2\rangle_{H^{-1},H^1_0}$ has weak derivative $\langle A'(t)v_1,v_2\rangle_{H^{-1},H^1_0}$, where
\[A'(t)u = -\divo(D^3 W(\nabla y (\cdot,t))[\partial_t \nabla y(\cdot,t),\nabla u]).\]
Since $D^3 W(\nabla y)[\partial_t \nabla y] \in L^\infty([0,T_{\rm cont}+\delta] \times \Omega; \R^{d \times d \times d \times d})$, we see that $A'$ is bounded with values in $L(H^1_0(\Omega;\R^d), H^{-1}(\Omega;\R^d))$. Therefore, we can use Theorem \ref{thm:additionalregularity} to conclude that
\[\partial_t^m y \in C([0,T_{\rm cont}+\delta];L^2(\Omega;\R^d))\]
and
\[\partial_t^{m-1} y \in C([0,T_{\rm cont}+\delta];H^1(\Omega;\R^d)).\]
For $1 \leq k \leq m-2$, taking $k$ time derivatives in the equation we find
\[\partial_t^k y  = (A(t) + \lambda \Id)^{-1} (-\partial_t^{k+2} y + \lambda \partial_t^k y  + \partial_t^k f + \divo S).\]
Here
\begin{align*}
S &= \partial_t^k (DW_{\rm CB}(\nabla y)) - D^2 W_{\rm CB}(\nabla y)[\partial_t^k \nabla y]\\
  &= \sum_{\substack{
            \beta \in \N_0^{d \times d}\\
            2 \leq \vert \beta \rvert \leq k}}
            \sum_{s=1}^{k} \sum_{p_s(k,\beta)} k! D^{\beta+E_{ij}}W_{\rm CB}(\nabla y) \prod_{j=1}^s \frac{(\partial_t^{\gamma_j}\nabla y)^{\lambda_j}}{\lambda_j! \gamma_j!^{\lvert \lambda_j \rvert}}
\end{align*}
and $A(t) + \lambda \Id \colon H^{m-k} \cap H^1_0 \to H^{m-k-2}$ is invertible for $\lambda$ large enough because of Theorem \ref{thm:optimalsobolevregularity}, where we use that $D^2W_{\rm CB}(\nabla y) \in L^{\infty}([0,T_{\rm cont}+\delta],H^{m-1}(\Omega;\R^d))$ according to Lemma \ref{lem:composition}. Theorem \ref{thm:optimalsobolevregularity} also gives a time independent bound on $\lVert (A(t) + \lambda \Id)^{-1}  \rVert_{L(H^{m-k-2}, H^{m-k})}$.

According to Lemma \ref{lem:composition}, $B \mapsto D^3 W_{\rm CB} \circ B$ is a bounded map from $H^{m-2}$ to $H^{m-2}$. Therefore, we can use Lemma \ref{lem:prodofsobolev} with $M=m-2$ to see that $A'(t) \colon H^{m-k} \to H^{m-k-2}$ is well defined with
\[\lVert A'(t) \rVert_{L(H^{m-k}, H^{m-k-2})} \leq C\]
uniform in $t$. Since
\[(A(t) + \lambda \Id)^{-1} - (A(s) + \lambda \Id)^{-1} = -(A(t) + \lambda \Id)^{-1}(A(t)-A(s))(A(s) + \lambda \Id)^{-1},\]
we obtain
\[\lVert (A(t) + \lambda \Id)^{-1} - (A(s) + \lambda \Id)^{-1} \rVert_{L(H^{m-k-2},H^{m-k}\cap H^1_0)} \leq C \lvert t-s \rvert.\]
Using that $\partial_t^\gamma \nabla y$ is weakly continuous in $H^{m-1-\gamma}$, we can use Lemma \ref{lem:prodofsobolev} and its additional statement with $M=m-2$, $N=k-\lvert \beta \rvert$, $L=m-k-1 < M-N$ and $\lambda_k = \gamma_j - 1$ to find that $S$ is (strongly) continuous with values in $H^{m-k-1}$.

Putting all of this together we find inductively, starting at $k=m$ and $k=m-1$, that
\[\partial_t^k y \in C([0,T_{\rm cont}+\delta]; H^{m-k}(\Omega;\R^d))\]
for $1 \leq k \leq m$.

For $k=0$ we can no longer use the theory for linear systems in divergence form. Instead, we look at the operator
\[(A(t)u)_i = - \sum_{k,j,l} (D^2 W_{\rm CB} (\nabla y(\cdot,t)))_{ijkl} \frac{\partial^2 u_k}{\partial x_j \partial x_l} .\]
Now we have
\[y  = (A(t) + \lambda \Id)^{-1} (\partial_t^{2} y + \lambda y  - f).\]
But Theorem \ref{thm:optimalsobolevregularity} also holds in non-divergence form, hence
\[(A(t) + \lambda \Id)^{-1} \colon H^{m-2}(\Omega;\R^d) \to H^{m}(\Omega;\R^d) \cap H^1_0(\Omega;\R^d)\]
is well defined and bounded independently of $t$ since $m-2>\frac{d}{2}$ gives a bound on $D^2W_{\rm CB}(\nabla y(\cdot;t))$ in $W^{1, \infty}$. The continuity then follows along the same lines as for $k \geq 1$.

Having established the additional regularity, we have a contradiction to the definition of $T_{\rm cont}$. This proves the existence of a $T_{\rm cont}$ with the desired properties. But due to the local uniqueness of solutions, any smaller $T$ cannot satisfy either one of (i), (ii) or (iii). Therefore, $T_{\rm cont}$ is unique.
\end{proof}

\section{Atomistic Elastodynamics} \label{sec:atom}

The main theorem of this paper is the existence of a solution to the atomistic equations (for $\eps$ small enough), for as long as the corresponding solution to the Cauchy-Born continuum equations exists and is atomistically stable. But before we state and prove the theorem, let us prove two auxiliary theorems that are already interesting on their own. In both cases we will prove more general versions than what we will actually need for the main theorem.

We start with a theorem on local existence and uniqueness.
\begin{thm} \label{thm:discretelocalexistence}
Let $d \in \{1,2,3,4\}$,  $V \subset \R^{d \times \mathcal{R}}$, $W_{\rm atom} \in C^2_b(V)$ and set $V_{\rm atom} = \{A \in V \colon \lambda_{\rm atom}(A)>0\}$.
Let $\eps_0 >0$, $C_1>0$, $r_0>0$ and $\gamma \in \big[\frac{d}{2},2 \big]$, such that $4C_1 \eps_0^{\gamma-\frac{d}{2}} \leq r_0$. Furthermore, let $0<\eps \leq \eps_0$, $T_0>0$ and fix $f_{\rm atom}$, $g_{\rm atom}$, $y_{\rm ref}$ with
\begin{align*}
f_{\rm atom}(x,\cdot) &\in L^2((0,T_0); \R^d) \quad \text{for all } x \in \into_\eps \Omega,\\
g_{\rm atom}(x,\cdot) &\in H^2((0,T_0); \R^d) \quad \text{for all } x \in \partial_\eps \Omega,\\
y_{\rm ref}(x,\cdot) &\in H^2((0,T_0); \R^d) \quad \text{for all } x \in \Omega \cap \eps \Z^d,
\end{align*}
such that
\[\dist (D_{\mathcal{R},\eps} y_{\rm ref} (x,t), V_{\rm atom}^c) > r_0\]
in $\sinto_\eps \Omega \times [0,T_0]$ and
\[\sup_t \lVert y_{\rm ref}(t) -g_{\rm atom}(t) \rVert_{\partial_\eps \Omega, 0} \leq C_1 \eps^{\gamma}.\]
Then there exists a time $T>0$ which may depend on all the previous quantities, including $\eps$, such that the following holds:\\
Given any $t_0 \in [0,T_0)$ and $h_{{\rm atom},0} \in \mathcal{A}_\eps (\Omega, g_{\rm atom}(\cdot,t_0))$, $h_{{\rm atom},1} \in \mathcal{A}_\eps (\Omega, \dot{g}_{\rm atom}(\cdot,t_0))$, such that
\begin{align*}
\lVert h_{{\rm atom},1}-\dot{y}_{\rm ref}(\cdot,t_0) \rVert_{\ell^2_\eps(\into_\eps \Omega)}^2 &+ \lVert h_{{\rm atom},0} - y_{\rm ref}(\cdot,t_0) \rVert_{h^1_\eps(\sinto_\eps \Omega)}^2 \leq C_1^2 \eps^{2\gamma},
\end{align*}
there is a unique solution $y \in H^2((t_0,\min\{t_0+T,T_0\}); \R^d)^{\Omega \cap \eps \Z^d}$ to the discrete initial-boundary-value problem on $[t_0,\min\{t_0+T,T_0\}]$.
\end{thm}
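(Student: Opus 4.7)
The first observation is that for fixed $\eps>0$ and bounded $\Omega$, the set $\Omega \cap \eps\Z^d$ is finite. Since the boundary values on $\partial_\eps \Omega$ are prescribed by $g_{\rm atom}$, the equation reduces to a finite-dimensional second-order system of ODEs for the interior unknowns $u(t) := (y(x,t))_{x \in \into_\eps \Omega}$. My plan is therefore to recast the problem as a first-order Carath\'eodory ODE system in $\R^N$ for some $N = N(\eps,\Omega)$ and invoke Picard--Lindel\"of-type existence and uniqueness, the only subtlety being to ensure that $D_{\mathcal{R},\eps}y$ stays inside $V$ on the interval of existence.

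Writing $\dot u = v$, $\dot v = F(t,u)$ with
\[F(t,u)(x) = f_{\rm atom}(x,t) + \divo_{\mathcal{R},\eps}\bigl(DW_{\rm atom}(D_{\mathcal{R},\eps}y(x,t))\bigr),\qquad x \in \into_\eps\Omega,\]
where $y$ extends $u$ by $g_{\rm atom}(\cdot,t)$ on $\partial_\eps \Omega$, I note that $F$ is measurable in $t$ with norm dominated by an $L^2$ function on $(0,T_0)$ (using $f_{\rm atom}(x,\cdot)\in L^2$ and $g_{\rm atom}(x,\cdot)\in H^2 \hookrightarrow C^0$), and that on any bounded set of configurations for which $D_{\mathcal{R},\eps}y$ stays in a compact $K \subset V$, $F(t,\cdot)$ is Lipschitz (using $W_{\rm atom} \in C^2_b(V)$). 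The crucial initial estimate comes from the norm equivalence recalled in Section~\ref{sec:models}:
\[\sup_{x \in \sinto_\eps \Omega} \lvert D_{\mathcal{R},\eps}(h_{{\rm atom},0}-y_{\rm ref}(\cdot,t_0))(x)\rvert \leq \eps^{-\frac{d}{2}} C_1 \eps^\gamma \leq C_1 \eps_0^{\gamma-\frac{d}{2}} \leq \tfrac{r_0}{4},\]
which together with $\dist(D_{\mathcal{R},\eps}y_{\rm ref}(\cdot,t_0),V^c) > r_0$ places $D_{\mathcal{R},\eps}h_{{\rm atom},0}(x)$ at distance at least $3r_0/4$ from $V^c$, uniformly in $x$.

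I can thus fix a compact neighbourhood $K \subset V_{\rm atom}$ of $\{D_{\mathcal{R},\eps}h_{{\rm atom},0}(x) : x \in \sinto_\eps\Omega\}$ on which $DW_{\rm atom}$ is Lipschitz, and apply the standard Carath\'eodory--Picard existence theorem on the open set of configurations whose discrete gradient lies in $K$. This yields a unique local solution with $\dot u \in C^0$ and $\ddot u \in L^2$; equivalently $y \in H^2$ in time. By continuity of $y$ (which holds since $H^2 \hookrightarrow C^1$ in one variable), after possibly shrinking $T$ one may guarantee that $D_{\mathcal{R},\eps}y(\cdot,t) \in K$ for all $t \in [t_0, t_0+T]$, making the construction self-consistent. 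Uniqueness within this admissible class follows from Gr\"onwall applied to the difference of two candidate solutions in $K$.

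The only obstacle is ensuring compatibility of the low time regularity of the data ($L^2$ for $f_{\rm atom}$, $H^2$ for $g_{\rm atom}$) with classical ODE theory; once the problem is framed as a finite-dimensional Carath\'eodory ODE, the analysis is entirely standard. The discrete nature of the problem makes spatial regularity a non-issue, which is why no uniformity in $\eps$ is needed and indeed $T$ may depend on $\eps$.
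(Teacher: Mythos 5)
Your underlying strategy is essentially the same as the paper's: recast the finite-dimensional second-order system as a first-order Carath\'eodory ODE and apply a Picard-type fixed point argument. However, the proposal glosses over the single point that is the actual content of the theorem, namely that the existence time $T$ may be chosen \emph{uniformly} over all starting times $t_0 \in [0,T_0)$ and all admissible initial data satisfying the given $\eps^\gamma$-bounds. This uniformity is what the continuation argument in Theorem~\ref{thm:atomisticwavethm} relies on, and a generic local existence theorem does not deliver it for free.

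Concretely, three steps in your write-up are where the uniformity is lost. First, you fix a compact $K \subset V_{\rm atom}$ around $\{D_{\mathcal{R},\eps}h_{{\rm atom},0}(x)\}$, so $K$ depends on $h_{{\rm atom},0}$ and hence implicitly on $t_0$; one should instead work with a tube of radius comparable to $r_0$ around $\{D_{\mathcal{R},\eps}y_{\rm ref}(x,t) : x,t\}$, or simply exploit that $W_{\rm atom} \in C^2_b(V)$ makes $DW_{\rm atom}$ globally Lipschitz on $V$ so that no compactness is needed at all. Second, your statement ``after possibly shrinking $T$ one may guarantee $D_{\mathcal{R},\eps}y(\cdot,t) \in K$'' is the place where a quantitative estimate is required: you need a Gr\"onwall-type bound (as in the last displayed estimate of the paper's proof) showing that $\|z(t)-z^0\|_{\ell^\infty}$ grows at a rate controlled by $C_1$, $\|DW_{\rm atom}\|_\infty$, $\|f_{\rm atom}\|_{L^2(0,T_0)}$ and $\|\ddot y_{\rm ref}\|_{L^2(0,T_0)}$ but not by $t_0$ --- only then is the required shrinking of $T$ independent of $t_0$. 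Third, you never use the boundary-norm hypothesis $\sup_t\|y_{\rm ref}(t)-g_{\rm atom}(t)\|_{\partial_\eps\Omega,0}\le C_1\eps^\gamma$. Because the Dirichlet data $g_{\rm atom}(\cdot,t)$ moves in time, the extension $y$ of $u$ by $g_{\rm atom}(\cdot,t)$ has a discrete gradient that drifts even if $u$ is frozen, and one needs precisely this bound (via the norm equivalence $\sup_x|D_{\mathcal{R},\eps}T_\eps(g_{\rm atom}-y_{\rm ref})| \le \eps^{-d/2}\|g_{\rm atom}-y_{\rm ref}\|_{\partial_\eps\Omega,0}$) to control that drift. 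The paper sidesteps all three issues at once by substituting $z = y - y_{\rm ref} - T_\eps(g_{\rm atom}-y_{\rm ref})$, which has homogeneous boundary data, and running the Banach fixed point argument for $z$; this makes the constants manifestly uniform in $t_0$. Finally, your uniqueness claim is stated only ``within this admissible class,'' i.e.\ within $K$; the theorem asserts uniqueness among all $H^2$ solutions, which requires a separate Gr\"onwall argument showing that any $H^2$ solution with $D_{\mathcal{R},\eps}y \in V$ is automatically trapped in the ball used for the fixed point (the paper does this at the end of its proof).
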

\begin{proof}
This is basically the Picard-Lindelöf Theorem. But we want to quantify the dependence on the initial conditions. We look at the set
\begin{align*}
K_{T,b,z_0} = \Big\{ (z_1,z_2) &\colon z_1,z_2 \in C([t_0,\min\{ t_0+T,T_0\}];\ell^\infty(\Omega \cap \eps \Z^d))\\
& z_1(t) \in \mathcal{A}_\eps(\Omega;0), z_2(t) \in \mathcal{A}_\eps(\Omega;0)\\
& \sup_t \lVert z(t) - z^0 \rVert_{\ell^\infty(\Omega \cap \eps \Z^d)}\leq b \Big\},
\end{align*}
with the metric induced by $\lVert z \rVert =  \sup_t \lVert z(t) \rVert_{\ell^\infty(\Omega \cap \eps \Z^d)}$.
Here we substituted
\[z(t)=\begin{pmatrix} y(t) - y_{\rm ref}(t) - T_\eps (g_{\rm atom}(t) - y_{\rm ref}(t)) \\ \dot{y}(t) - \dot{y}_{\rm ref}(t) - T_\eps (\dot{g}_{\rm atom}(t) - \dot{y}_{\rm ref}(t)) \end{pmatrix}\]
 and
\[z^0=\begin{pmatrix} h_{{\rm atom},0} - y_{\rm ref}(t_0) - T_\eps (g_{\rm atom}(t_0) - y_{\rm ref}(t_0)) \\ h_{{\rm atom},1} - \dot{y}_{\rm ref}(t_0) - T_\eps (\dot{g}_{\rm atom}(t_0) - \dot{y}_{\rm ref}(t_0)) \end{pmatrix}.\]
The equation can be written as $\dot{z}(t) = F(t,z(t))$, where $F_1(t,z_1,z_2) = z_2$ and
\begin{align*}
F_2&(t,z_1,z_2)(x) = f_{\rm atom}(x,t) - \ddot{y}_{\rm ref}(x, t) - T_\eps (\ddot{g}_{\rm atom}(t) - \ddot{y}_{\rm ref}(t)) \\
&\quad +\divo_{\mathcal{R},\eps} \big( DW_{\rm atom}(D_{\mathcal{R},\eps} y_{\rm ref} (x,t) + D_{\mathcal{R},\eps} T_\eps (g_{\rm atom}(t) - y_{\rm ref}(t))(x) + D_{\mathcal{R},\eps} z_1(x))\big)
\end{align*}
for $x \in \into_\eps \Omega$, but $F_2(t,z_1,z_2)(x)=0$ for $x \in \partial_\eps \Omega$.
Since we do not even claim strong differentiability, it is best to look at the fixed point equation of
\[ G(z)(t) = z^0 + \int_{t_0}^t F(s,z(s))\,ds.\]
Clearly,
\[\sup_t \lVert y_{\rm ref}(t) -g_{\rm atom}(t) \rVert_{\partial_\eps \Omega, 0} \leq C_1 \eps^{\gamma}\]
implies
\begin{align*}
\lvert D_{\mathcal{R},\eps} T_\eps(g_{\rm atom} - y_{\rm ref}) (x,t) \rvert &\leq \eps^{- \frac{d}{2}} \lVert D_{\mathcal{R},\eps} T_\eps(g_{\rm atom} - y_{\rm ref}) (t) \rVert_{h^1_\eps}\\
&=\eps^{- \frac{d}{2}} \lVert g_{\rm atom} - y_{\rm ref} (t) \rVert_{\partial_\eps \Omega, 0}\\
&\leq C_1 \eps^{\gamma - \frac{d}{2}}
\end{align*}
uniformly in $x$ and $t$. Now, if $0<b\leq \frac{\eps r_0}{8 \lvert \mathcal{R} \rvert^{\frac{1}{2}}}$
then for any $z \in K_{T,b,z_0}$
\begin{align*}
&\lvert D_{\mathcal{R},\eps} T_\eps(g_{\rm atom} - y_{\rm ref}) (x,t)+ D_{\mathcal{R},\eps}z_1(x,t) \rvert\\
&\quad \leq \lvert D_{\mathcal{R},\eps} T_\eps(g_{\rm atom} - y_{\rm ref}) (x,t) \rvert + \lvert D_{\mathcal{R},\eps} (z_1(x,t)-z_1^0(x)) \rvert + \lvert D_{\mathcal{R},\eps}z_1^0(x) \rvert \\
&\quad \leq C_1 \eps^{\gamma - \frac{d}{2}} + \frac{2 b \lvert \mathcal{R}\rvert^{\frac{1}{2}}}{\eps}+ \lvert D_{\mathcal{R},\eps}z_1^0(x) \rvert \\
&\quad \leq 3 C_1 \eps^{\gamma - \frac{d}{2}} +  \frac{2 b \lvert \mathcal{R}\rvert^{\frac{1}{2}}}{\eps}\\
&\quad \leq r_0.
\end{align*}
Therefore $F(s,z(s))$ is well defined. Furthermore,
\begin{align*}
\sup\limits_t \sup\limits_{x \in \Omega \cap \eps \Z^d} &\lvert G(z)(x,t)-z^0(x) \rvert\\ &\leq \sup\limits_{t,x} \int_{t_0}^t \lvert F_1(s,z(s)) \rvert + \lvert F_2(s,z(s)) \rvert \,ds\\
&\leq bT + T C_1 \eps^{\gamma - \frac{d}{2}} + T \lVert T_\eps ( \dot{g}_{\rm atom} - \dot{y}_{\rm ref})\rVert_{L^\infty(0,T_0;\ell^\infty)} + T\frac{2 \lvert \mathcal{R}\rvert}{\eps} \lVert DW_{\rm atom}\rVert_\infty\\
&\quad + \sqrt{T} \big( \lVert f \rVert_{L^2(0, T_0; \ell^\infty)} + \lVert \ddot{y}_{\rm ref} \rVert_{L^2(0, T_0; \ell^\infty)} + \big\lVert T_\eps (\ddot{g}_{\rm atom} - \ddot{y}_{\rm ref}) \big\lVert_{L^2(0,T_0; \ell^\infty)}\big).
\end{align*}
In particular, for $T$ small enough
\[\sup\limits_t \sup\limits_{x \in \Omega \cap \eps \Z^d} \lvert G(z)(x,t)-z^0(x) \rvert \leq b.\]
Since $G(z)$ also has the correct boundary values, $G \colon K_{T,b,z_0} \to K_{T,b,z_0}$ is well defined. Given $z,\tilde{z} \in K_{T,b,z_0}$ we calculate
\begin{align*}
\sup\limits_t &\sup\limits_{x \in \Omega \cap \eps \Z^d} \lvert G(z)(x,t) - G(\tilde{z})(x,t) \rvert
\leq T \sup\limits_t \lVert F(t,z(t)) - F(t,\tilde{z}(t)) \rVert_{\ell^\infty(\Omega \cap \eps \Z^d)}\\
&\leq T\big( \sup_t \lVert z_2(t) - \tilde{z}_2(t) \rVert_{\ell^\infty(\Omega \cap \eps \Z^d)} + \lvert \mathcal{R}\rvert^{\frac{3}{2}} \frac{4}{\eps^2} \lVert D^2W_{\rm atom} \rVert_\infty \sup_t \lVert z_1(t) - \tilde{z}_1(t) \rVert_{\ell^\infty(\Omega \cap \eps \Z^d)}\big)\\
&\leq T\big(1+\lvert \mathcal{R}\rvert^{\frac{3}{2}} \frac{4}{\eps^2} \lVert D^2W_{\rm atom} \rVert_\infty \big) \sup_t \lVert z(t) - \tilde{z}(t) \rVert_{\ell^\infty(\Omega \cap \eps \Z^d)}\\
&\leq \frac{1}{2}\sup_t \lVert z(t) - \tilde{z}(t) \rVert_{\ell^\infty(\Omega \cap \eps \Z^d)},
\end{align*}
if we also require
\[T \leq \frac{1}{2+2\lvert \mathcal{R}\rvert^{\frac{3}{2}} \frac{4}{\eps^2} \lVert D^2W_{\rm atom} \rVert_\infty}.\]
Now we can use the Banach fixed point theorem. If $b$ and $T$ satisfy the constraints above, then $G$ has a unique fixed point $z \in K_{T,b,z_0}$. Setting $y = z_1 + y_{\rm ref} + T_\eps (g_{\rm atom}(t) - y_{\rm ref}(t))$, we have
\[y \in H^2((t_0,\min\{ t_0+T,T_0\});\ell^\infty(\Omega \cap \eps \Z^d))\]
and $y$ solves the discrete initial-boundary-value problem in the absolutely continuous sense on $[t_0,\min\{t_0+T,T_0\}]$.
Now conversely, if $y$ is any solution in $H^2((t_0,\min\{ t_0+T,T_0\});\ell^\infty(\Omega \cap \eps \Z^d))$ that satisfies
\[D_{\mathcal{R},\eps} y(x,t) \in V\]
for all $t$ and $x \in \sinto_\eps \Omega$, we can substitute back to $z$ and calculate
\begin{align*}
&\lVert z(t) - z^0 \rVert_{\ell^\infty(\Omega \cap \eps \Z^d)} \leq \int_{t_0}^t \lVert z(s) - z^0 \rVert_{\ell^\infty(\Omega \cap \eps \Z^d)}\,ds + \frac{2 \mathcal{R} \lVert DW_{\rm atom} \rVert_\infty}{\eps} (t-t_0)\\
&\qquad+ \sqrt{t-t_0} \big( \lVert f \rVert_{L^2(0, T_0; \ell^\infty)} + \lVert \ddot{y}_{\rm ref} \rVert_{L^2(0, T_0; \ell^\infty)} + \big\lVert T_\eps (\ddot{g}_{\rm atom} - \ddot{y}_{\rm ref}) \big\lVert_{L^2(0,T_0; \ell^\infty)} \big)\\
&\leq \int_{t_0}^t \lVert z(s) - z^0 \rVert_{\ell^\infty(\Omega \cap \eps \Z^d)}\,ds +C_1\big( (t-t_0)+ \sqrt{t-t_0} \big)
\end{align*}

Using Grönwall's inequality we thus get
\begin{align*}
\lVert z(t) - z^0 \rVert_{\ell^\infty(\Omega \cap \eps \Z^d)} &\leq C_1(t-t_0 + \sqrt{t-t_0})e^{t-t_0}\\
&\leq 2C_1\sqrt{T}e^T\\
&\leq b
\end{align*}
if we additionally assume $T \leq 1$ and 
$T \leq \frac{b}{2 C_1 e}$.
Therefore, $z \in K_{T,b,z_0}$, and the uniqueness of the solution follows.
\end{proof}

Although this lemma already gives us a local solution, the time $T$ depends heavily on $\eps$ and is not necessarily bounded from below as $\eps$ goes to $0$. One of our main goals is to show existence on an $\eps$-independent time interval. Actually, we even want to go one step further. We will show that the atomistic solution exists as long as the solution to the continuous problem exists and is atomistically stable.

As mentioned in the introduction establishing an atomistic Gårding inequality is key to provide control of the stability of solutions for long times and large deformations. There are some differences to the continuous Gårding inequality (Theorem \ref{thm:contGårding}). Unsurprisingly, we need to require atomistic stability. Due to the discreteness of the problem we also need to track the variation of the coefficients and the dependence on $\eps$ more explicitly.

\begin{thm} \label{thm:discreteGårding}
Let $d \in \N$, $\Omega \subset \R^d$ open and bounded, and $\lambda_1, \Lambda, \eps_0>0$. Consider a family $A_\eps \colon \sinto_\eps \Omega \to \R^{d \times \mathcal{R} \times d \times \mathcal{R}}$, for $0< \eps \leq \eps_0$, with $\lambda_{\rm atom} (A_\eps(x)) \geq \lambda_1$ for all $x \in \sinto_\eps \Omega$ and $0< \eps \leq \eps_0$. Assume also that $ \sup_\eps \lVert A_\eps \rVert_\infty \leq \Lambda$ and that there are $r_\eps \geq \eps$ such that
\[\sup_{0<\eps \leq \eps_0} \sup_{\substack{x,x' \in \sinto_\eps \Omega \\ \lvert x-x' \rvert \leq 2 r_\eps+ 2 \eps R_{\rm max}}} \lvert A_\eps(x) - A_\eps(x') \rvert \leq \frac{\lambda_1}{4},\]
then there is a $\lambda_2=\lambda_2(\lambda_1, \Lambda, d, \mathcal{R})$, such that
\[ \eps^d \sum\limits_{x \in \sinto_\eps \Omega} A_\eps(x)[D_{\mathcal{R},\eps} u (x),D_{\mathcal{R},\eps} u (x)] \geq \frac{\lambda_1}{2} \lVert u \rVert^2_{h^1_\eps(\sinto_\eps \Omega)} - \frac{\lambda_2}{r_\eps^2} \lVert u \rVert^2_{\ell_\eps^2(\into_\eps \Omega)}\]
for all $u \in \mathcal{A}_\eps(\Omega,0)$ and $0< \eps \leq \eps_0$.
\end{thm}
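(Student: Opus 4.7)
My plan is to adapt the classical IMS-type localization argument for the Gårding inequality to the discrete setting. Concretely, I would freeze the coefficient $A_\eps(x)$ locally, use the pointwise stability $\lambda_{\rm atom}(A_\eps(x_l)) \geq \lambda_1$ on each piece, and re-sum, with the departure from constant coefficients and the cost of the localization generating the $\lambda_2/r_\eps^2$ correction.

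The first step is to build a smooth partition of unity $\{\chi_l\}_l$ adapted to the scale $r_\eps$: $\sum_l \chi_l^2 \equiv 1$ on a neighborhood of $\overline{\Omega}$, each $\chi_l$ supported in $B(x_l, r_\eps)$ with $x_l \in \sinto_\eps\Omega$, bounded overlap, and $\lVert\nabla\chi_l\rVert_\infty \leq C/r_\eps$. Because $u \in \mathcal{A}_\eps(\Omega,0)$ each $\chi_l u$ belongs to $\mathcal{A}_\eps(\Omega,0)$ as well, so the definition of the atomistic stability constant applied to the frozen tensor $A_\eps(x_l)$ gives
\[
\eps^d \sum_x A_\eps(x_l)\bigl[D_{\mathcal{R},\eps}(\chi_l u), D_{\mathcal{R},\eps}(\chi_l u)\bigr] \geq \lambda_1 \lVert\chi_l u\rVert_{h^1_\eps(\sinto_\eps\Omega)}^2
\]
for every $l$. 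Summing in $l$ yields the working inequality that must be rewritten in terms of $u$ alone.

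To that end I expand via the discrete Leibniz identity $D_{\rho,\eps}(\chi_l u)(x) = \chi_l(x+\eps\rho) D_{\rho,\eps}u(x) + u(x) D_{\rho,\eps}\chi_l(x)$. On the left hand side this yields the main piece $T_1 := \sum_l A_\eps(x_l)[\chi_l^+ D_{\mathcal{R},\eps}u, \chi_l^+ D_{\mathcal{R},\eps}u]$ (with $\chi_l^+$ denoting the $\rho$-shifted evaluation) plus two commutators $T_2, T_3$ which, together with $\lvert D_{\rho,\eps}\chi_l\rvert \leq CR_{\rm max}/r_\eps$, Cauchy–Schwarz and Young's inequality, are bounded by $\eta\lVert u\rVert_{h^1_\eps}^2 + C(\eta)/r_\eps^2 \lVert u\rVert_{\ell^2_\eps}^2$. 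I would compare $T_1$ with $\eps^d\sum_x A_\eps(x)[D_{\mathcal{R},\eps}u, D_{\mathcal{R},\eps}u]$ by splitting the discrepancy into a coefficient-variation part, controlled by $(\lambda_1/4)\lvert D_{\mathcal{R},\eps}u\rvert^2$ via the hypothesis and the identity $\sum_l \chi_l(x+\eps\rho)^2 = 1$ (so that $\sum_l \lvert\chi_l^+ D_{\mathcal{R},\eps}u\rvert^2 = \lvert D_{\mathcal{R},\eps}u\rvert^2$), and a discrete shift part involving $Q_{\rho\sigma}(x) := \sum_l \chi_l(x+\eps\rho)\chi_l(x+\eps\sigma)$, where $1 - Q_{\rho\sigma} = \frac{1}{2}\sum_l (\chi_l(x+\eps\rho) - \chi_l(x+\eps\sigma))^2 = O((\eps R_{\rm max}/r_\eps)^2)$ by Cauchy–Schwarz. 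A parallel expansion of the right hand side produces $\sum_l\lVert\chi_l u\rVert_{h^1_\eps}^2 \geq (1-\eta)\lVert u\rVert_{h^1_\eps}^2 - C(\eta)/r_\eps^2 \lVert u\rVert_{\ell^2_\eps}^2$, after bounding the analogous cross term $\sum_l\chi_l^+ D_{\rho,\eps}\chi_l = \frac{1}{2\eps}(1 - Q_{\rho,0}) = O(\eps R_{\rm max}^2/r_\eps^2)$ by the same Cauchy–Schwarz estimate.

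The principal obstacle is ensuring, uniformly in $r_\eps \geq \eps$, that the collected coefficient of $\lVert u\rVert_{h^1_\eps}^2$ on the right side is at least $\lambda_1/2$: the shift contribution $C\Lambda\lvert\mathcal{R}\rvert^2(\eps R_{\rm max}/r_\eps)^2 \lVert u\rVert_{h^1_\eps}^2$ is truly small only when $r_\eps$ is substantially larger than $\eps R_{\rm max}$. I would handle this by a dichotomy. For $r_\eps \geq C_0 \eps$ with $C_0 = C_0(\lambda_1,\Lambda,d,\mathcal{R})$ chosen so that the shift contribution is at most $\lambda_1/8$, the localization argument with a small $\eta$ produces the desired bound. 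For $r_\eps < C_0 \eps$, the inequality follows already from the trivial estimate $\eps^d\sum A_\eps[D_{\mathcal{R},\eps}u,D_{\mathcal{R},\eps}u] \geq -\Lambda\lVert u\rVert_{h^1_\eps}^2$ combined with the discrete inverse inequality $\lVert u\rVert_{h^1_\eps}^2 \leq C_{\mathcal{R}}\eps^{-2}\lVert u\rVert_{\ell^2_\eps}^2$, since $\eps^{-2}$ and $r_\eps^{-2}$ differ only by the factor $C_0^2$; this merely enlarges $\lambda_2$.
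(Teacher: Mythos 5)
Your plan is sound and would produce the inequality, but it organizes the argument differently from the paper and thereby creates a difficulty the paper sidesteps. You start from the frozen side $\sum_l A_\eps(x_l)[D_{\mathcal{R},\eps}(\chi_l u)]^2 \geq \lambda_1\sum_l\lVert\chi_l u\rVert_{h^1_\eps}^2$ and compare upward to $\eps^d\sum_x A_\eps(x)[D_{\mathcal{R},\eps}u]^2$; because you choose the Leibniz split $D_{\rho,\eps}(\chi_l u)(x) = \chi_l(x+\eps\rho)D_{\rho,\eps}u(x) + u(x)D_{\rho,\eps}\chi_l(x)$, the main term carries the cutoff at shifted points, and on resumming in $l$ you meet the quantity $Q_{\rho\sigma}(x) = \sum_l\chi_l(x+\eps\rho)\chi_l(x+\eps\sigma)$, which equals $1$ only up to $O((\eps R_{\rm max}/r_\eps)^2)$. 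That is a genuine extra error term, and your dichotomy ($r_\eps \gtrsim \eps$ with a large constant versus $r_\eps \lesssim \eps$ handled by the inverse inequality) is a legitimate way to absorb it, at the cost of enlarging $\lambda_2$. The paper avoids this entirely by going the other direction: it writes $\eps^d\sum_x A_\eps(x)[D_{\mathcal{R},\eps}u]^2 = \eps^d\sum_j\sum_x A_\eps(x)[\eta_j(x)D_{\mathcal{R},\eps}u]^2$ using $\sum_j\eta_j^2 = 1$ (so $\eta_j$ appears \emph{unshifted} and factors cleanly out of the bilinear form), then uses the complementary Leibniz split $\eta_j(x)D_{\rho,\eps}u(x) = D_{\rho,\eps}(\eta_j u)(x) - u(x+\eps\rho)D_{\rho,\eps}\eta_j(x)$ together with two applications of Young's inequality, and only then freezes at $x_{j,\eps}$ and applies atomistic stability; the oscillation hypothesis is paid once, as $\frac{\lambda_1}{4}\lvert D_{\mathcal{R},\eps}(\eta_j u)\rvert^2$, and no $Q_{\rho\sigma}$ correction ever arises. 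If you rewrite your argument with the other Leibniz form (cutoff unshifted, $u$ shifted) you would reproduce the paper's cleaner route without the dichotomy; as written, your proof is correct but longer.
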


\begin{rem}
In this paper we will only use the theorem in the case where $r_\eps$ is independent of $\eps$. This corresponds to $A_\eps$ only changing on the macroscopic scale. We will still prove the more general version since the theorem has some interest itself.
\end{rem}

\begin{proof}
By the definition of atomistic stability we have
\[ \eps^d \sum\limits_{x \in \sinto_\eps \Omega} A_\eps(z) [D_{\mathcal{R},\eps} u (x),D_{\mathcal{R},\eps} u (x)] \geq \lambda_1 \lVert u \rVert^2_{h^1_\eps(\sinto_\eps \Omega)}\]
for every $z \in \sinto_\eps \Omega$, every $\eps>0$, and every $u \in \mathcal{A}_\eps(\Omega,0)$.

Now, choose countable many $z_j \in \R^d$ and $\eta_j \in C_c^\infty(\R^d;[0,1])$ such that $\sum_j \eta_j^2(x) = 1$ for every $x \in \R^d$, $\supp \eta_j \subset B_{r_\eps}(z_j)$, $\lvert \nabla \eta_j \rvert \leq \frac{C(d)}{r_\eps}$, and the decomposition is locally finite in the sense that 
\[  \lvert \{j \colon B_{r_\eps}(z_j) \cap B_R(x) \neq \emptyset \} \rvert \leq C(d) \big(1+\frac{R}{r_\eps}\big)^d\]
for all $x \in \R^d$ and $R>0$. Whenever $B_{r_\eps + \eps R_{\rm max}}(z_j)\cap \sinto_\eps \Omega \neq \emptyset$ fix a point $x_{j,\eps} \in B_{r_\eps + \eps R_{\rm max}}(z_j)\cap \sinto_\eps \Omega$. By assumption we then have $\lvert A_\eps (x_{j, \eps}) - A_\eps(x) \rvert \leq \frac{\lambda_1}{4}$ for every $x \in B_{r_\eps + \eps R_{\rm max}}(z_j)\cap \sinto_\eps \Omega$.
Now, since
\[(D_{\mathcal{R},\eps} (\eta_j u) (x))_\rho = \eta_j(x)(D_{\mathcal{R},\eps} u(x))_\rho + u(x+\eps \rho) (D_{\mathcal{R},\eps} \eta_j (x))_\rho \]
for any $\delta>0$ we can calculate with Young's inequality
\begin{align*}
\eps^d \sum\limits_{x \in \sinto_\eps \Omega} &A_\eps(x) [D_{\mathcal{R},\eps} u (x),D_{\mathcal{R},\eps} u (x)] \\
&= \eps^d \sum_j \sum\limits_{x \in \sinto_\eps \Omega} A_\eps(x) [\eta_j(x) D_{\mathcal{R},\eps} u (x),\eta_j(x) D_{\mathcal{R},\eps} u (x)] \\
&\geq \eps^d \sum_j \sum\limits_{x \in \sinto_\eps \Omega} A_\eps(x) [D_{\mathcal{R},\eps} (\eta_j u) (x),D_{\mathcal{R},\eps} (\eta_j u)  (x)]\\
&\qquad - \delta \eps^d \sum_j \sum\limits_{x \in \sinto_\eps \Omega} n_j^2(x) \lvert D_{\mathcal{R},\eps} u (x) \rvert^2\\
&\qquad - \Lambda (1+ \frac{\Lambda}{\delta})\eps^d \sum_j \sum\limits_{x \in \sinto_\eps \Omega} \sum_\rho \lvert u(x+\eps \rho) \rvert^2 \Big\lvert \frac{\eta_j(x+\eps \rho) - \eta_j(x)}{\eps}\Big\rvert^2\\
&\geq \eps^d \sum_j \sum\limits_{x \in \sinto_\eps \Omega} A_\eps(x_{j,\eps}) [D_{\mathcal{R},\eps} (\eta_j u) (x),D_{\mathcal{R},\eps} (\eta_j u)  (x)] - \frac{\lambda_1}{4} \lvert D_{\mathcal{R},\eps} (\eta_j u) (x) \rvert^2 \\
&\qquad - \delta \eps^d \sum\limits_{x \in \sinto_\eps \Omega} \lvert D_{\mathcal{R},\eps} u (x) \rvert^2\\
&\qquad - \Lambda (1+ \frac{\Lambda}{\delta})\eps^d \sum_j \sum\limits_{x \in \sinto_\eps \Omega} \sum_\rho \lvert u(x+\eps \rho) \rvert^2 \Big\lvert \frac{\eta_j(x+\eps \rho) - \eta_j(x)}{\eps}\Big\rvert^2.
\end{align*}
Using the atomistic stability at $x_{j,\eps}$, we can continue in the spirit to find
\begin{align*}
\eps^d \sum\limits_{x \in \sinto_\eps \Omega} &A_\eps(x) [D_{\mathcal{R},\eps} u (x),D_{\mathcal{R},\eps} u (x)] \\
&\geq \eps^d \frac{3}{4} \lambda_1 \sum_j \sum\limits_{x \in \sinto_\eps \Omega}  \lvert D_{\mathcal{R},\eps} (\eta_j u) (x) \rvert^2\\
&\qquad - \delta \eps^d \sum\limits_{x \in \sinto_\eps \Omega} \lvert D_{\mathcal{R},\eps} u (x) \rvert^2\\
&\qquad - \Lambda (1+ \frac{\Lambda}{\delta})\eps^d \sum_j \sum\limits_{x \in \sinto_\eps \Omega} \sum_\rho \lvert u(x+\eps \rho) \rvert^2 \Big\lvert \frac{\eta_j(x+\eps \rho) - \eta_j(x)}{\eps}\Big\rvert^2\\
&\geq \eps^d (\frac{3}{4} \lambda_1-2 \delta) \sum\limits_{x \in \sinto_\eps \Omega}  \lvert D_{\mathcal{R},\eps} u (x) \rvert^2 \\
&\qquad - \Big(\Lambda(1 + \frac{\Lambda}{\delta}) + \frac{3}{4} \lambda_1 (1+ \frac{3 \lambda_1}{4\delta}) \Big)  \lVert u \rVert_{\ell^2_\eps(\into_\eps \Omega)}^2 C(d, \mathcal{R}) \frac{1}{r_\eps^2} \Big(1+ \frac{\eps R_{\rm max}}{r_\eps}\Big)^d
\end{align*}
Now, choosing $\delta = \frac{\lambda_1}{8}$ and using $r_\eps \geq \eps$, we indeed get
\begin{align*}
\eps^d \sum\limits_{x \in \sinto_\eps \Omega} &A_\eps(x) [D_{\mathcal{R},\eps} u (x),D_{\mathcal{R},\eps} u (x)] \\
&\geq \frac{\lambda_1}{2}\eps^d \sum\limits_{x \in \sinto_\eps \Omega}  \lvert D_{\mathcal{R},\eps} u (x) \rvert^2 - C(\lambda_1, \Lambda, d, \mathcal{R}) \frac{1}{r_\eps^2} \lVert u \rVert_{\ell^2_\eps(\into_\eps \Omega)}^2.
\end{align*}
\end{proof}

Let us make some last preparations for our main theorem. We will show that there are atomistic solutions close to the extended and regularized reference configuration
\[y_{\rm ref} = \eta_\eps \ast (E y_{\rm cont}) \]
where $y_{\rm cont}$ is a solution of the continuous problem, $\eta_\eps(x)$ denotes the standard scaled mollifying kernel, and $E$ denotes the Stein extension which is an extension operator for all Sobolev spaces requiring only very little regularity of the boundary, cf. \cite[Chapter VI]{steinsingint}.

The conditions that we will pose on the time-dependent atomistic boundary conditions can be formulated much easier with the following norm. Given $g \colon \partial_\eps \Omega \times [0,T_0]$, such that $g(x, \cdot) \in H^2(0,T_0)$ for all $x \in \partial_\eps \Omega$, we look at the (quadratic) functional
\begin{align*}
\mathcal{F}(z) &= \lVert z(0) \rVert_{\ell^2_\eps(\into_\eps \Omega)}^2 + \lVert z(0) \rVert_{h^1_\eps(\sinto_\eps\Omega)}^2 + \lVert \dot{z}(0) \rVert_{\ell^2_\eps(\into_\eps \Omega)}^2\\
&\quad + \int_0^{T_0} \lVert z(\tau) \rVert_{h^1_\eps(\sinto_\eps\Omega)}^2 + \lVert \dot{z}(\tau) \rVert_{h^1_\eps(\sinto_\eps\Omega)}^2 + \lVert \ddot{z}(\tau) \rVert_{\ell^2_\eps(\into_\eps \Omega)}^2 \,d\tau
\end{align*}
for $z \colon \Omega \cap \eps \Z^d \times [0,T_0]$, such that $z(x, \cdot) \in H^2(0,T_0)$ for all $x \in \Omega \cap \eps \Z^d$ and $z |_{\partial_\eps \Omega \times [0, T_0]} = g$. Clearly the functional is lower semi-continuous and coercive in $H^2$ and thus has a minimizer. By strict convexity this minimizer is unique and it is also given as the unique solution to 
\begin{align*}
0&=(z(0), w(0))_{\ell^2_\eps}^2 + (z(0), w(0))_{h^1_\eps}^2 + (\dot{z}(0), \dot{w}(0))_{\ell^2_\eps}^2\\
&\quad + \int_0^{T_0} (z(\tau), w(\tau))_{h^1_\eps}^2 + (\dot{z}(\tau), \dot{w}(\tau))_{h^1_\eps}^2 + (\ddot{z}(\tau), \ddot{w}(\tau))_{\ell^2_\eps}^2 \,d\tau
\end{align*}
for all $w \in H^2$ with $w |_{\partial_\eps \Omega} = 0$. In particular, the mapping $K_\eps$ that maps $g$ to this minimizer is linear. Furthermore, $\lVert g \rVert_{\partial_\eps \Omega,dyn} := \big(\mathcal{F}(K_\eps g) \big)^{\frac{1}{2}}$ is a norm. Besides dominating the norms used in its definition, we will also use that
\[\lVert K_\eps g \rVert_{L^\infty(0,T_0; h^1_\eps)} \leq \lVert g \rVert_{\partial_\eps \Omega,dyn}\]
and
\[\lVert K_\eps g \rVert_{W^{1,\infty}(0,T_0; \ell^2_\eps)} \leq C(T) \lVert g \rVert_{\partial_\eps \Omega,dyn}.\]
We will then require
\[\lVert y_{\rm ref}-g_{\rm atom} \rVert_{\partial_\eps \Omega,dyn} \leq C_g \eps^\gamma, \]
in our main theorem below for some convergence rate $\gamma \in (\frac{d}{2},2]$.

While this specific norm is mainly chosen to satisfy certain inequalities in the proof, it is not at all surprising. The terms at the starting time are obviously required by the convergence estimate we want to prove uniformly in time (see below). The terms controlling the $h^1_\eps$-norm are crucial. Among other things, they ensure the uniform convergence of the gradients. Therefore, at the boundary, the atomistic boundary conditions enforce not only the correct asymptotic boundary values but also the correct asymptotic (normal) derivative and thus suppress surface relaxation effects. This is important for the Cauchy-Born rule to hold near the boundary. At last, a difference in the second time derivatives has a similar effect as a difference in the body forces and thus, unsurprisingly, we want both terms to be small in the same norm.

\begin{thm} \label{thm:atomisticwavethm}
Let $d \in \{2,3\}$ and $m \in \N$, $m \geq 4$. Let $T_0>0$ and let $\Omega \subset \R^d$ be an open, bounded set with $\partial \Omega$ of class $C^m$. Let $V \subset \R^{d \times \mathcal{R}}$ be open and $W_{\rm atom} \in C_b^{m+1}(V)$. Let $f$ be a continuous body force, $h_0,h_1$ initial data and $g$ boundary values such that
\begin{align*}
&f \in C^{m-1}(\overline{\Omega}\times [0,T_0]; \R^d)\\
&g \in C^{m+1}(\overline{\Omega}\times [0,T_0]; \R^d)\\
&h_0 \in H^m (\Omega; \R^d)\\
&\{(\nabla h_0(x)\rho)_{\rho \in \mathcal{R}} \colon  x \in \overline{\Omega}\} \subset V \cap \{A \colon \lambda_{\rm atom}(A)>0\} \\
&h_1 \in H^{m-1} (\Omega; \R^d)
\end{align*}
and such that the compatibility conditions of order $m$ are satisfied. Furthermore, assume that the unique solution of the Cauchy-Born problem $y_{\rm cont}$ from Theorem \ref{thm:localcontwave} exists until $T_0$ and satisfies
\[ y_{\rm cont} \in \bigcap_{k=0}^m C^k\big([0,T_0]; H^{m-k}(\Omega;\R^d)\big),\]
as well as
\[\{(\nabla y_{\rm cont}(x,t) \rho)_{\rho \in \mathcal{R}} \colon  x \in \overline{\Omega}, t \in [0,T_0]\} \subset V \cap \{A \colon \lambda_{\rm atom}(A)>0\}.\]
Now let $C_g, C_f, C_h>0$ and $\gamma \in (\frac{d}{2}+ \frac{1}{m-1},2]$. Then there is an $\eps_0>0$ such that the following holds for every $0< \eps \leq \eps_0$.

Given atomistic data $f_{\rm atom} \colon \into_\eps \Omega \times [0,T_0] \to \R^d$, $g_{\rm atom} \in H^2((0,T_0); \R^d)^{\partial_\eps \Omega}$, $h_{{\rm atom},0} \in \mathcal{A}_\eps (\Omega, g_{\rm atom}(\cdot,0))$, and $h_{{\rm atom},1} \in \mathcal{A}_\eps (\Omega, \dot{g}_{\rm atom}(\cdot,0))$ with

\[\lVert y_{\rm ref}-g_{\rm atom} \rVert_{\partial_\eps \Omega,dyn} \leq C_g \eps^\gamma, \]
%
\[\lVert h_{\rm atom, 1}- \dot{y}_{\rm ref}(0) \rVert^2_{\ell^2_\eps(\into_\eps \Omega)} + \lVert h_{\rm atom, 0}- y_{\rm ref}(0) \rVert^2_{\ell^2_\eps(\into_\eps \Omega)} + \lVert h_{\rm atom, 0}- y_{\rm ref}(0) \rVert^2_{h^1_\eps(\into_\eps \Omega)} \leq C_h^2 \eps^{2\gamma},\]
\[\lVert f_{\rm ref}-f_{\rm atom}  \rVert_{L^2(0,T_0; \ell^2_\eps(\into \Omega))} \leq C_f \eps^\gamma,\]
where
\[f_{\rm ref} = \tilde{f} + \ddot{y}_{\rm ref} - \ddot{\tilde{y}}_{\rm cont}.\]

Then there is a unique $y \in H^2((0,T_0); \R^d)^{\Omega \cap \eps \Z^d}$ that solves the atomistic equations with body force $f_{\rm atom}$ boundary values $g_{\rm atom} $ and initial conditions $h_{{\rm atom},0}, h_{{\rm atom},1}$. Furthermore, we have the convergence estimate
\begin{align*}
&\lVert \dot{y} - \dot{y}_{\rm ref} \rVert_{\ell^2_\eps(\into_\eps \Omega)} +\lVert y - y_{\rm ref} \rVert_{h^1_\eps(\sinto_\eps \Omega)}+ \lVert y- y_{\rm ref} \rVert_{\ell^2_\eps(\into_\eps \Omega)}\\
&\qquad \leq C e^{Ct} (C_g + C_h + C_f + \eps^{2-\gamma}) \eps^{\gamma}
\end{align*}
for some $C=C(\mathcal{R}, V, W_{\rm atom}, y_{\rm cont}, \Omega, m, \gamma)>0$.
\end{thm}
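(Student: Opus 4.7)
The plan is to reduce the atomistic equation to a linear second-order equation for the correction $u := y - y_{\rm ref} - w_\eps$, control it via an energy estimate based on the atomistic Gårding inequality, and then close a continuation argument. First I would construct the reference $y_{\rm ref} = \eta_\eps \ast (E y_{\rm cont})$ and the boundary corrector $w_\eps := K_\eps(g_{\rm atom} - y_{\rm ref})$. Proposition \ref{prop:ell2residuum} combined with Propositions \ref{prop:approximation1}, \ref{prop:approximation2} and the specific form of $f_{\rm ref}$ yields
\[\sup_{t \in [0,T_0]} \lVert \ddot{y}_{\rm ref}(t) - \divo_{\mathcal{R},\eps}\bigl(DW_{\rm atom}(D_{\mathcal{R},\eps} y_{\rm ref}(t))\bigr) - f_{\rm ref}(t)\rVert_{\ell^2_\eps(\into_\eps \Omega)} \leq C \eps^2,\]
while the definition of $\lVert\cdot\rVert_{\partial_\eps\Omega,{\rm dyn}}$ bounds $w_\eps,\dot w_\eps,\ddot w_\eps$ in the appropriate norms by $C C_g \eps^\gamma$.

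Feeding a local atomistic solution $y$ from Theorem \ref{thm:discretelocalexistence} into this setup, $u$ has zero boundary values and solves
\[\ddot u - \divo_{\mathcal{R},\eps}\bigl(A_\eps[D_{\mathcal{R},\eps} u]\bigr) = F,\]
where $A_\eps(x,t) := \int_0^1 D^2 W_{\rm atom}\bigl(D_{\mathcal{R},\eps} y_{\rm ref}(x,t) + s\,D_{\mathcal{R},\eps}(u+w_\eps)(x,t)\bigr)\,ds$ and $F$ collects $f_{\rm atom}-f_{\rm ref}$, the residual, $-\ddot w_\eps$, and the cross term $\divo_{\mathcal{R},\eps}(A_\eps[D_{\mathcal{R},\eps} w_\eps])$. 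I would then set up the bootstrap
\[T^* := \sup\bigl\{\,t \in [0,T_0] : \text{solution exists on }[0,t]\text{ and }\lVert u\rVert_{L^\infty(0,t;h^1_\eps)} + \lVert w_\eps\rVert_{L^\infty(0,t;h^1_\eps)} \leq \eta \bigr\}\]
with $\eta$ fixed small enough that, on this regime, $D_{\mathcal{R},\eps} y$ stays in a prescribed $\ell^\infty$-neighbourhood of $(\nabla y_{\rm cont}\rho)_\rho$. Continuity of $\lambda_{\rm atom}$ then gives $\lambda_{\rm atom}(A_\eps(x,t)) \geq \lambda_1/2$ uniformly, and uniform continuity of $\nabla y_{\rm cont}$ on $\overline\Omega\times[0,T_0]$ supplies a macroscopic $r>0$ (independent of $\eps$) so that Theorem \ref{thm:discreteGårding} applies with time- and $\eps$-uniform constants $(\lambda_1/2,\lambda_2/r^2)$.

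I would now run a classical energy estimate: test the equation against $\dot u$, sum by parts using $\dot u|_{\partial_\eps\Omega}=0$, and apply Gårding. The modified energy
\[\mathcal{E}(t) := \tfrac12 \lVert \dot u(t)\rVert_{\ell^2_\eps}^2 + \tfrac12 \eps^d \!\!\sum_{x\in\sinto_\eps\Omega}\!\!A_\eps(x,t)[D_{\mathcal{R},\eps} u,D_{\mathcal{R},\eps} u] + C_*\lVert u(t)\rVert_{\ell^2_\eps}^2\]
is coercive in $\lVert \dot u\rVert_{\ell^2_\eps}^2 + \lVert u\rVert_{h^1_\eps}^2$ once $C_*$ exceeds the Gårding constant. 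Its time derivative splits into: (i) $\langle \dot u, F\rangle$, bounded by $\lVert F\rVert^2 \leq C(C_f^2 + \eps^{4-2\gamma} + C_g^2)\eps^{2\gamma}$; (ii) $\tfrac12\langle \dot A_\eps\,Du,Du\rangle$, bounded by $C\mathcal{E}$ since $\partial_t A_\eps$ is pointwise bounded; and (iii) the boundary-layer term $\langle A_\eps D w_\eps, D\dot u\rangle$, which I integrate by parts in time so that only $\lVert D w_\eps\rVert_{h^1_\eps}$ and $\lVert D \dot w_\eps\rVert_{h^1_\eps}$ appear under a time integral, with the endpoint absorbed by Cauchy-Schwarz into $\mathcal{E}(t)$. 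Gronwall yields
\[\mathcal{E}(t) \leq C e^{Ct}\bigl(C_h^2 + C_g^2 + C_f^2 + \eps^{4-2\gamma}\bigr) \eps^{2\gamma}.\]

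The continuation closes via the norm equivalency $\sup_x \lvert D_{\mathcal{R},\eps} u\rvert \leq \eps^{-d/2}\lVert u\rVert_{h^1_\eps} \leq C \eps^{\gamma - d/2}$: since $\gamma > d/2$, for $\eps_0$ small the bootstrap threshold is never attained, so $T^* = T_0$. Uniqueness transports from Theorem \ref{thm:discretelocalexistence}, and adding back $w_\eps$ gives the claimed convergence rate. The hard part is threading two competing constraints through the bootstrap: $A_\eps$ must remain in the atomistically stable regime (which is known only after the energy estimate) and must simultaneously satisfy the uniform modulus-of-continuity hypothesis of Theorem \ref{thm:discreteGårding}. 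Controlling the mixed term involving $D w_\eps$ without losing more than a factor $\eps^\gamma$, while also absorbing the $\eps^{-d/2}$ loss in the bootstrap, is exactly what forces the lower bound $\gamma > d/2 + 1/(m-1)$ on the admissible convergence rate.
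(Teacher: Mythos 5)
Your overall architecture matches the paper's — regularized reference $y_{\rm ref}=\eta_\eps\ast(E y_{\rm cont})$, boundary corrector $K_\eps(g_{\rm atom}-y_{\rm ref})$, a bootstrap on an energy quantity, Gårding via Theorem \ref{thm:discreteGårding}, the residual estimates, and Gronwall. But there is a genuine gap at the heart of the argument, in your point (ii).

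You claim that $\tfrac12\langle \dot A_\eps\,D_{\mathcal{R},\eps} u, D_{\mathcal{R},\eps} u\rangle$ is ``bounded by $C\mathcal{E}$ since $\partial_t A_\eps$ is pointwise bounded.'' This is false. With
\[
A_\eps(x,t)=\int_0^1 D^2W_{\rm atom}\bigl(D_{\mathcal{R},\eps}y_{\rm ref}+s\,D_{\mathcal{R},\eps}(y-y_{\rm ref})\bigr)\,ds,
\]
the chain rule puts a factor $D_{\mathcal{R},\eps}\dot u$ (and $D_{\mathcal{R},\eps}\dot w_\eps$) inside $\dot A_\eps$. The energy $\mathcal{E}$ controls $\lVert\dot u\rVert_{\ell^2_\eps}$ but \emph{not} $\lVert\dot u\rVert_{h^1_\eps}$, and hence gives no $\ell^\infty$ bound on $D_{\mathcal{R},\eps}\dot u$. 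The only generic route is the inverse inequality $\lVert\dot u\rVert_{h^1_\eps}\le C\eps^{-1}\lVert\dot u\rVert_{\ell^2_\eps}$, which, together with $\lVert D_{\mathcal{R},\eps}u\rVert_{\ell^\infty}\le\eps^{-d/2}\lVert u\rVert_{h^1_\eps}$, produces a Gronwall coefficient of size $\eps^{\gamma-d/2-1}$. That is bounded only when $\gamma\ge\frac d2+1$, and this fails for the range $\gamma\in(\frac d2+\frac1{m-1},2]$ that the theorem actually asserts (take $m$ large and $\gamma$ near $\frac d2$). So the naive estimate cannot close.

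The paper's cure for exactly this term (called $I_1$ there, with a twin $I_3$ for the $w_\eps$ cross term) is a non-obvious iterated integration by parts in time. It introduces the tensor families $A_{\eps,k}=\int_0^1 D^k W_{\rm atom}(\cdots)\,ds$ and quantities $B_k,C_k,D_k,E_k,F_k$, and uses the identities $\int_0^t B_k+(k-1)D_{k-1}\,d\tau=F_{k-1}(t)-F_{k-1}(0)$ and $B_k=C_k+D_k+E_k$ to telescope from $k=3$ up to $k=m+1$. Each step trades a factor of $D_{\mathcal{R},\eps}\dot u$ for an extra copy of $D_{\mathcal{R},\eps}u$, which costs only $B\eps^{\gamma-d/2}$ in $\ell^\infty$ under the bootstrap hypothesis. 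After $m-1$ steps the remainder $D_{m+1}$ carries the factor $\eps^{-1}(B\eps^{\gamma-d/2})^{m-1}$, which can be made $\le1$ precisely when $(m-1)(\gamma-\frac d2)>1$, i.e.\ $\gamma>\frac d2+\frac1{m-1}$. The iteration length is capped by $W_{\rm atom}\in C^{m+1}$, since each step differentiates $W_{\rm atom}$ once more. So the admissible range of $\gamma$ comes from \emph{this} bootstrapping scheme, not (as you speculate in your closing paragraph) from ``controlling the mixed term involving $Dw_\eps$ without losing more than $\eps^\gamma$.'' Your proposal as written cannot reach the stated $\gamma$-range.

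A second, smaller inconsistency: you state that $F$ ``collects \dots the cross term $\divo_{\mathcal{R},\eps}(A_\eps[D_{\mathcal{R},\eps}w_\eps])$'' and then bound $\lVert F\rVert^2\le C(C_f^2+\eps^{4-2\gamma}+C_g^2)\eps^{2\gamma}$, but $\divo_{\mathcal{R},\eps}$ loses a factor $\eps^{-1}$ in $\ell^2_\eps$, so that bound cannot hold if the cross term lives in $F$. You later handle this term in bilinear form as (iii), which is correct — but then it must be removed from $F$. The paper keeps it as $I_2,I_3$ bilinear terms throughout, and $I_3$ again requires the telescoping scheme.
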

\begin{rem}
Remember that
\[\tilde{f}(x)= \fint_{Q_\eps(x)} f(z)\,dz.\]
If $y_{\rm cont} \in H^2(0,T;C^{1,\gamma-1}(\Omega; \R^d))$, the more natural choice $f_{\rm ref} = \tilde{f}$ suffices since then
\[\lVert \ddot{y}_{\rm ref} - \ddot{\tilde{y}}_{\rm cont} \rVert_{L^2(0,T_0; \ell^2_\eps(\into \Omega))} \leq C(y_{\rm cont}) \eps^\gamma.\]
This condition is automatically satisfied if $m \geq 6$.
\end{rem}
\begin{proof}
First let us prove that $Ey_{\rm cont}$ and $y_{\rm ref}$ inherit the atomistic stability from $y_{\rm cont}$ as long as we stay in or close to $\Omega$.
Given $R>0$ and $x \in \Omega + B_{\eps R}(0)$, take $x' \in \Omega$ with $\lvert x-x'\rvert \leq R \eps$. Then we directly see
\begin{align*}
\lvert \nabla Ey_{\rm cont}(x) - \nabla y_{\rm cont}(x') \rvert &\leq \lVert \nabla^2 Ey_{\rm cont} \rVert_{L^\infty} R \eps \\
&\leq C(\Omega) R \eps \lVert \nabla^2 y_{\rm cont} \rVert_{L^\infty}
\end{align*}
since $y_{\rm cont} \in H^4(\Omega)$, which embeds into $W^{2, \infty}$ and even $C^2$ for $d \leq 3$. It immediately follows that
\[\lvert \nabla y_{\rm ref}(x) - \nabla y _{\rm cont}(x')\rvert \leq C(\Omega) (R+1) \eps \lVert \nabla^2 y_{\rm cont} \rVert_{L^\infty}\]
and
\begin{align*}
\lvert D_{\mathcal{R},\eps} y_{\rm ref}(x) - (\nabla y _{\rm cont}(x')\rho)_{\rho \in \mathcal{R}}\rvert &= \Big(\sum_\rho \Big\lvert \int_0^1 \nabla y_{\rm ref}(x+s\eps \rho)\rho - \nabla y_{\rm cont}(x')\rho \,ds \Big\rvert^2\Big)^{\frac{1}{2}}\\
&\leq C(\Omega, \mathcal{R}, R) \eps \lVert \nabla^2 y_{\rm cont} \rVert_{L^\infty}\\
&= C(\Omega, \mathcal{R}, R, y_{\rm cont}) \eps 
\end{align*}
Since the stability constant is continuous, the set $\{A \in V \colon \lambda_{\rm atom}(A)>0\}$ is open. On the other hand, $\{(\nabla y_{\rm cont}(x,t) \rho)_{\rho \in \mathcal{R}} \colon  x \in \overline{\Omega}, t \in [0,T]\}$ is compact. Therefore,
\[\{(\nabla y_{\rm cont}(x,t) \rho)_{\rho \in \mathcal{R}} \colon  x \in \overline{\Omega}, t \in [0,T]\} + \overline{B_{\eps C}(0)} \subset \{A \in V \colon \lambda_{\rm atom}(A)>0\} \]
for all $\eps \leq \eps_0$ if $\eps_0 = \eps_0(\mathcal{R}, \Omega, y_{\rm cont}, R, V, W_{\rm atom})$ is chosen small enough.

For a time dependent atomistic deformation we define the norm-energy
\begin{align*}
\mathcal{E}(t) =\lVert \dot{u} \rVert^2_{\ell^2_\eps(\into_\eps \Omega)} +\lVert u \rVert^2_{h^1_\eps(\sinto_\eps \Omega)}+ \lVert u \rVert^2_{\ell^2_\eps(\into_\eps \Omega)},
\end{align*}
where $u = y- y_{\rm ref} - K_\eps (g_{\rm atom}- y_{\rm ref})$. Note that this energy is well-defined and continuous on $[a,b]$ if $u \in H^2((a,b); \R^d)^{\Omega \cap \eps \Z^d}$.

For $B>0$ to be defined later, let $T_\eps$ be the supremum of all times $T \leq T_0$ such that a solution $y$ exists on $[0,T)$ and
\[\mathcal{E}(t) \leq B^2 \eps^{2\gamma}\]
for $t \in [0,T]$.

Note that
\[\sup_t \lVert y_{\rm ref}(t) - g_{\rm atom}(t)\rVert_{\partial_\eps\Omega,0} \leq \lVert y_{\rm ref} - g_{\rm atom}\rVert_{\partial_\eps \Omega,dyn} \leq C_g \eps^\gamma.\]
Choosing $\eps_0$ so small that
\[4(\max\{C_g,C_h\} +1) \eps_0^{\gamma - \frac{d}{2}} \leq \inf_{x,t} \dist(D_{\mathcal{R},\eps} y_{\rm ref}(x,t), V_{\rm atom}^c)\]
we can apply the local result, Theorem \ref{thm:discretelocalexistence}. If furthermore $B> \sqrt{2C_g^2 + 2C_h^2}$, which will be the case in our choice of $B$, then we indeed see that $T_\eps > 0$. The uniqueness part of Theorem \ref{thm:discretelocalexistence} implies that all such solutions agree on the intersection of their domains of definition. Putting these solutions together we thus have a $y$ on $(0,T_\eps)$ such that for every $0<T<T_\eps$ it holds that $y \in H^2(0,T)$ and $y$ is a solution of the problem. If we choose $\eps_0$ even smaller, such that
\[4(\sqrt{2B^2+2C_g^2} +1) \eps_0^{\gamma - \frac{d}{2}} \leq \inf_{x,t} \dist(D_{\mathcal{R},\eps} y_{\rm ref}(x,t), V_{\rm atom}^c)\]
we can again apply Theorem \ref{thm:discretelocalexistence} with $t_0 \in (0,T_\eps)$ and initial conditions $y(t_0), \dot{y}(t_0)$, since
\begin{align*}
\lVert \dot{y}(t_0) - \dot{y}_{\rm ref}(t_0)  \rVert^2_{\ell^2_\eps(\into_\eps \Omega)} +\lVert  y(t_0) - y_{\rm ref}(t_0)  \rVert^2_{h^1_\eps(\sinto_\eps \Omega)} &\leq 2 \mathcal{E}(t) + 2 C_g^2 \eps^{2 \gamma}\\
&\leq (2 B^2 + 2 C_g^2) \eps^{2 \gamma}.
\end{align*}
We thus get a solution on $(t_0, \max\{t_0 + T_{\rm loc}, T_0\})$ for some $T_{\rm loc}$ independent of $t_0$. Again by uniqueness all solutions fit together. Therefore, $y \in H^2(0,T_\eps)$ and $y$ is a solution of the problem on $(0,T_\eps)$. Additionally, we know that $T_\eps = T_0$ or the solution exists on a larger intervall than $(0,T_\eps)$. In the second case we must have $\mathcal{E}(T_\eps) = B^2 \eps^{2\gamma}$. To ensure that we are in the first case it thus suffices to estimate the energy on $[0,T_\eps]$. This is what we will do in the rest of the proof.

The energy bound implies
\[\lVert D_{\mathcal{R},\eps} y- D_{\mathcal{R},\eps} y_{\rm ref} \rVert_\infty \leq (C_g +B)\eps^{\gamma - \frac{d}{2}}.\]
Choosing $\eps_0$ even smaller, now also depending on $C_g$, $B$ and $\gamma$, by continuity of the stability constant, we can find a $\lambda_0=\lambda_0(y_{\rm cont}, V, W_{\rm atom})>0$ such that
\[\lambda_{\rm atom}(M)\geq \lambda_0 \quad \text{and} \quad M \in V\]
for all $M$ with $\lvert M - D_{\mathcal{R},\eps} y_{\rm ref} \rvert \leq (C_g + B) \eps_0^{\gamma - \frac{d}{2}}$ for any $x,t$. In particular, we see that this is true for $M = D_{\mathcal{R},\eps} y$ or $M=s D_{\mathcal{R},\eps} y + (1-s)D_{\mathcal{R},\eps} y_{\rm ref}$, $s\in [0,1]$ as long as $t < T_\eps$.

Setting 
\[A_\eps = \int_0^1 D^2 W_{\rm atom}\big(D_{\mathcal{R},\eps}y_{\rm ref} + s(D_{\mathcal{R},\eps}y -D_{\mathcal{R},\eps}y_{\rm ref} )\big) \,ds,\]
we see that for $\lvert x-x'\rvert \leq 2r + 2 \eps R_{\rm max}$
\begin{align*}
\lvert A_\eps(x) - A_\eps(x') \rvert &\leq \lVert D^3 W_{\rm atom} \rVert_\infty \big( \lVert D^2y_{\rm ref} \rVert_\infty \lvert x-x' \rvert + 2 (B+C_g) \eps^{\gamma - \frac{d}{2}}\big) \\
&\leq C ( r + \eps + (B + C_g) \eps^{\gamma - \frac{d}{2}}).
\end{align*}
If again $\eps_0$ is small enough we can therefore use the atomistic Gårding inequality from Theorem  \ref{thm:discreteGårding} with $r=r(y_{\rm cont},W_{\rm atom}, \lambda_0)$ small enough and independent of $\eps$ to get
\begin{align*}
\mathcal{E}(t)&\leq \lVert u \rVert^2_{\ell^2_\eps(\into_\eps \Omega)} +  \lVert \dot{u} \rVert^2_{\ell^2_\eps(\into_\eps \Omega)} + \max\{2 , \frac{\lambda_0}{2}\} \lVert u \rVert^2_{h^1_\eps(\sinto_\eps \Omega)}\\
&\leq C \lVert u \rVert^2_{\ell^2_\eps(\into_\eps \Omega)} +  \max\{\frac{4}{\lambda_0} , 1\} \lVert \dot{u} \rVert^2_{\ell^2_\eps(\into_\eps \Omega)}  \\
&\quad + \max\{\frac{4}{\lambda_0} , 1\} \eps^d \sum_{x \in \sinto_\eps \Omega} A_\eps(x,t)[D_{\mathcal{R},\eps}u]^2\\
&\leq C \lVert u \rVert^2_{\ell^2_\eps(\into_\eps \Omega)} + \max\{\frac{8}{\lambda_0} , 2\} \Big( \frac{1}{2}\lVert \dot{u} \rVert^2_{\ell^2_\eps(\into_\eps \Omega)} +\frac{1}{2} \eps^d \sum_{x \in \sinto_\eps \Omega} A_\eps(x,t)[D_{\mathcal{R},\eps}u ]^2 \Big)
\end{align*}
for some $C=C(y_{\rm cont}, W_{\rm atom}, \lambda_0, \mathcal{R})$. If we rewrite this in terms of the initial conditions and take absolute values, we get 
\begin{align*}
\mathcal{E}(t) &\leq C \Big( \lVert h_{\rm atom, 1}- \dot{y}_{\rm ref}(0) - \frac{\partial}{\partial t}K_\eps(y- y_{\rm ref})(0) \rVert^2_{\ell^2_\eps(\into_\eps \Omega)}\\
&\quad + \lVert h_{\rm atom, 0}- y_{\rm ref}(0) - K_\eps(y- y_{\rm ref})(0) \rVert^2_{\ell^2_\eps(\into_\eps \Omega)}\\
&\quad + \lVert h_{\rm atom, 0}- y_{\rm ref}(0) - K_\eps(y- y_{\rm ref})(0)  \rVert^2_{h^1_\eps(\sinto_\eps \Omega)}\\
&\quad+ \Big\lvert \int_0^t (u, \dot{u})_{\ell^2_\eps} \,d\tau \Big\rvert \\
&\quad+ \Big\lvert \int_0^t (\dot{u}, \ddot{u})_{\ell^2_\eps} + \eps^d \sum_{x \in \sinto_\eps \Omega} A_\eps(x,\tau)[D_{\mathcal{R},\eps}u, D_{\mathcal{R},\eps}\dot{u} ]\\
&\quad+ \frac{1}{2}\eps^d \sum_{x \in \sinto_\eps \Omega} \dot{A}_\eps(x,\tau)[D_{\mathcal{R},\eps}u]^2 \,d\tau \Big\rvert\Big).
\end{align*}

Using our assumptions at $t=0$ and for the boundary conditions we can continue by
\begin{align*}
\mathcal{E}(t) &\leq C \Big( (C_g^2+ C_h^2) \eps^{2 \gamma} + \int_0^t \mathcal{E}(\tau) \,d\tau + \Big\lvert \int_0^t  \eps^d \sum_{x \in \sinto_\eps \Omega} \dot{A}_\eps(x,\tau)[D_{\mathcal{R},\eps}u]^2 \,d\tau \Big\rvert \\
&\quad + \Big\lvert \int_0^t \big(\dot{u}, \frac{\partial^2}{\partial t^2}K_\eps (g_{\rm atom} - y_{\rm ref})\big)_{\ell^2_\eps} \,d\tau \Big\rvert\\
&\quad + \Big\lvert \int_0^t \eps^d \sum_{x \in \sinto_\eps \Omega} A_\eps(x,\tau)[D_{\mathcal{R},\eps}K_\eps (g_{\rm atom} - y_{\rm ref}), D_{\mathcal{R},\eps}\dot{u} ] \,d\tau \Big\rvert\\
&\quad + \Big\lvert \int_0^t (\dot{u}, \ddot{y} - \ddot{y}_{\rm ref})_{\ell^2_\eps} + \eps^d \sum_{x \in \sinto_\eps \Omega} A_\eps(x,\tau)[D_{\mathcal{R},\eps}(y - y_{\rm ref}), D_{\mathcal{R},\eps}\dot{u} ] \,d\tau \Big\rvert \Big)\\
&=: C \big( (C_g^2+ C_h^2) \eps^{2 \gamma} + \int_0^t \mathcal{E}(\tau) \,d\tau \big) + I_1 + I_2 + I_3 + I_4
\end{align*}
Clearly,
\[ I_2 \leq C\big(\int_0^t \mathcal{E}(\tau) \,d\tau + C_g^2 \eps^{2 \gamma}\big).\]
For $I_4$ we can use the estimates from the static case. Indeed, partial summation gives
\begin{align*}
I_4 &\leq C\Big( \int_0^t \mathcal{E}(t) \,d\tau + \int_0^t \lVert \ddot{y} - \ddot{y}_{\rm ref} - \divo_{\mathcal{R},\eps} (A_\eps (x, \tau) D_{\mathcal{R},\eps}(y-y_{\rm ref})) \rVert_{\ell^2_\eps(\into_\eps \Omega)}^2 \,d\tau \Big) \\
&= C\Big( \int_0^t \mathcal{E}(t) \,d\tau\\
&\quad + \int_0^t \lVert \ddot{y} - \ddot{y}_{\rm ref} - \divo_{\mathcal{R},\eps} (DW_{\rm atom} (D_{\mathcal{R},\eps}y) -DW_{\rm atom} (D_{\mathcal{R},\eps} y_{\rm ref})) \rVert_{\ell^2_\eps(\into_\eps \Omega)}^2 \,d\tau \Big)
\end{align*}
As we showed at the beginning of this proof, we have
\begin{align*}
\co \{D_{\mathcal{R},\eps} y_{\rm ref} (\hat{x}+ \eps \sigma), (\nabla y_{\rm ref} (x) \rho)_{\rho \in \mathcal{R}}\} \subset V
\end{align*}
for all $x\in \Omega_\eps$ and $\sigma \in \mathcal{R} \cup \{0\}$. We are therefore in a position to apply Proposition \ref{prop:ell2residuum}. 
\begin{align*}
\lVert \ddot{y} - &\ddot{y}_{\rm ref} - \divo_{\mathcal{R},\eps}(DW_{\rm atom}(D_{\mathcal{R},\eps} y)- DW_{\rm atom}(D_{\mathcal{R},\eps} y_{\rm ref})) \rVert_{\ell^2_\eps(\into_\eps \Omega)}\\
&= \lVert f_{\rm atom} - \ddot{y}_{\rm ref} + \divo_{\mathcal{R},\eps} DW_{\rm atom}(D_{\mathcal{R},\eps} y_{\rm ref}) \rVert_{\ell^2_\eps(\into_\eps \Omega)}\\
&\leq  \lVert f_{\rm atom} - \ddot{y}_{\rm ref} + \ddot{\tilde{y}}_{\rm cont} -\tilde{f} \rVert_{\ell^2_\eps(\into_\eps \Omega)}\\
&\quad + \lVert -\ddot{\tilde{y}}_{\rm cont} +\tilde{f}+ \divo_{\mathcal{R},\eps} DW_{\rm atom}(D_{\mathcal{R},\eps} y_{\rm ref}) \rVert_{\ell^2_\eps(\into_\eps \Omega)}\\
&\leq  \lVert f_{\rm atom} - f_{\rm ref} \rVert_{\ell^2_\eps(\into_\eps \Omega)} + \lVert -\ddot{y}_{\rm cont} +f+ \divo DW_{\rm CB}(\nabla y_{\rm ref}) \rVert_{L^2(\Omega_\eps; \R^d)}\\
&\quad +  C \eps^2 \Big\lVert \lVert \nabla^4 y_{\rm ref} \rVert_{L^\infty(B_{\eps R}(x))} + \lVert \nabla^3 y_{\rm ref} \rVert_{L^\infty(B_{\eps R}(x))}^\frac{3}{2} + \lVert \nabla^2 y_{\rm ref} \rVert_{L^\infty(B_{\eps R}(x))}^3 \\
&\quad +\eps\lVert \nabla^3 y_{\rm ref} \rVert_{L^\infty(B_{\eps R}(x))}^2 \Big\rVert_{L^2(\Omega_\eps)},
\end{align*}
where $C$ and $R$ just depend on $d, \mathcal{R}$ and $\lVert D^2 W_{\rm atom} \rVert_{C^2(V)}$. Now, remember that $y_{\rm ref} = \eta_\eps \ast (E y_{\rm cont})$. Hence, we can apply Proposition \ref{prop:approximation1} and Proposition \ref{prop:approximation2} and get 
\begin{align*}
\lVert \ddot{y} - &\ddot{y}_{\rm ref} - \divo_{\mathcal{R},\eps}(DW_{\rm atom}(D_{\mathcal{R},\eps} y)- DW_{\rm atom}(D_{\mathcal{R},\eps} y_{\rm ref})) \rVert_{\ell^2_\eps(\into_\eps \Omega)}\\
&\leq  \lVert f_{\rm atom} - f_{\rm ref} \rVert_{\ell^2_\eps(\into_\eps \Omega)}\\
&\quad + C \eps^2 \big(  \lVert \nabla^4 Ey_{\rm cont}\rVert_{L^2(\Omega_\eps + B_{(R+1)\eps}(0))} + \lVert \nabla^3 Ey_{\rm cont}\rVert_{L^3(\Omega_\eps + B_{(R+1)\eps}(0))}^{\frac{3}{2}}\\
&\quad + \lVert \nabla^2 Ey_{\rm cont}\rVert_{L^6(\Omega_\eps + B_{(R+1)\eps}(0))}^3 + \eps \lVert \nabla^3 Ey_{\rm cont}\rVert_{L^4(\Omega_\eps + B_{(R+1)\eps}(0))}^2 \\
&\quad + \lVert \nabla^2 Ey_{\rm cont} \rVert_{L^4(\Omega+B_{\eps}(0))} \lVert \nabla^3 Ey_{\rm cont} \rVert_{L^4(\Omega+B_{\eps}(0))} + \lVert \nabla^4 Ey_{\rm cont} \rVert_{L^2(\Omega+B_{\eps}(0))} \big)\\
&\leq \lVert f_{\rm atom} - f_{\rm ref} \rVert_{\ell^2_\eps(\into_\eps \Omega)} +C\eps^2 \lVert y_{\rm cont} \rVert_{H^4(\Omega; \R^d)} (1 + \lVert y_{\rm cont} \rVert_{H^4(\Omega; \R^d)}^2),
\end{align*}
where in the last step we used standard embedding theorems with $d \in \{2,3\}$, as well as the fact that $E$ is a continuous extension operator on all Sobolev spaces. Hence, we find
\begin{align*}
I_4 \leq C\big(\int_0^t \mathcal{E}(t) \,d\tau +   C_f^2 \eps^{2\gamma} + \eps^4 \big).
\end{align*}

Now let us look at the nonlinearity $I_1$. Evaluating the time derivative, we see that we can control it in terms of (some power of) the energy. But the resulting estimates are not good enough in $\eps$. The idea is to improve the estimates with a specific scheme of partial integrations in time. Indeed, it turns out that the estimates improve by $\eps^{\gamma-\frac{d}{2}}$ with each step. For this let us extend the definition of $A_\eps = A_{\eps,2}$ to
\[A_{\eps,k} = \int_0^1 D^k W_{\rm atom}\big(D_{\mathcal{R},\eps}y_{\rm ref} + s(D_{\mathcal{R},\eps}y -D_{\mathcal{R},\eps}y_{\rm ref} )\big)\,ds.
\]
Furthermore, let us write for $k \geq 2$
\begin{align*}
B_k(t) &= \int_0^1 \eps^d \sum_{x \in \sinto_\eps \Omega} \dot{A}_{\eps,k-1}[D_{\mathcal{R},\eps}u]^{k-1} s^{k-3} \,ds,\\
C_k(t) &= \int_0^1\eps^d \sum_{x \in \sinto_\eps \Omega} A_{\eps,k}[D_{\mathcal{R},\eps}u]^{k-1}[D_{\mathcal{R},\eps} \dot{y}_{\rm ref}] s^{k-3} \,ds,\\
D_k(t) &= \int_0^1\eps^d \sum_{x \in \sinto_\eps \Omega} A_{\eps,k}[D_{\mathcal{R},\eps}u]^{k-1}[D_{\mathcal{R},\eps} \dot{u}] s^{k-2} \,ds,\\
E_k(t) &= \int_0^1\eps^d \sum_{x \in \sinto_\eps \Omega} A_{\eps,k}[D_{\mathcal{R},\eps}u]^{k-1}[D_{\mathcal{R},\eps} (K_\eps (g_{\rm atom}-y_{\rm ref}))^\cdot] s^{k-2} \,ds,\\
F_k(t) &= \int_0^1\eps^d \sum_{x \in \sinto_\eps \Omega} A_{\eps,k}[D_{\mathcal{R},\eps}u]^{k} s^{k-2} \,ds.
\end{align*}
In this notation, we have
\[I_1 = C \Big\lvert \int_0^t B_3(\tau) \,d\tau \Big\rvert\] 
and, for $3 \leq k \leq m+1$, by partial integration in time,
\[\int_0^t B_{k}(\tau) + (k-1) D_{k-1}(\tau) \,d\tau =  F_{k-1}(t) - F_{k-1}(0), \]
as well as
\[B_k(t) = C_k(t) + D_k(t) + E_k(t)\]
by evaluating the time derivative. We claim to have relatively good estimates on the $C_k$, $E_k$, and $F_k$. At the same time we will prove estimates on the $D_k$ that get better with increasing $k$. Due to the two equations above this is sufficient. We just need to control all the $C_k$, $E_k$, and $F_k$, as well as $D_{m+1}$. Since
\[\lvert D_{\mathcal{R},\eps} u \rvert \leq \eps^{-\frac{d}{2}} \lVert u \rVert_{h^1_\eps(\sinto_\eps \Omega)} \leq B \eps^{\gamma-\frac{d}{2}},\]
we have the following estimates:
\begin{align*}
\Big\lvert \int_0^t C_k(\tau)\,d\tau \Big\rvert &\leq C (B \eps^{\gamma - \frac{d}{2}})^{k-3} \lVert y_{\rm cont} \rVert_{C^1(0,t; H^3(\Omega))}\int_0^t \mathcal{E}(\tau) \,d\tau, \\
\Big\lvert \int_0^t E_k(\tau)\,d\tau \Big\rvert &\leq C (B \eps^{\gamma - \frac{d}{2}})^{k-2} \Big(\int_0^t \mathcal{E}(\tau) \,d\tau + C_g^2 \eps^{2\gamma} \Big),  \\
\lvert F_k(0) \rvert &\leq C (B \eps^{\gamma - \frac{d}{2}})^{k-2} (C_g^2 + C_h^2) \eps^{2 \gamma},\\
\lvert F_k(t) \rvert &\leq C (B \eps^{\gamma - \frac{d}{2}})^{k-2} \mathcal{E}(t).
\end{align*}
Furthermore,
\begin{align*}
\Big\lvert \int_0^t D_{m+1}(\tau)\,d\tau \Big\rvert &\leq C (B \eps^{\gamma - \frac{d}{2}})^{m-1} \int_0^t \lVert u \rVert_{h^1_\eps(\sinto_\eps \Omega)} \lVert \dot{u} \rVert_{h^1_\eps(\sinto_\eps \Omega)} \,d\tau \\
&\leq C \eps^{-1} (B \eps^{\gamma - \frac{d}{2}})^{m-1} \int_0^t \lVert u \rVert_{h^1_\eps(\sinto_\eps \Omega)} \lVert \dot{u} \rVert_{\ell^2_\eps(\into_\eps \Omega)} \,d\tau \\
&\leq C \eps^{-1} (B \eps^{\gamma - \frac{d}{2}})^{m-1} \int_0^t \lVert u \rVert_{h^1_\eps(\sinto_\eps \Omega)}^2 + \lVert \dot{u} \rVert_{\ell^2_\eps(\into_\eps \Omega)}^2 \,d\tau \\
&\leq C \eps^{-1} (B \eps^{\gamma - \frac{d}{2}})^{m-1} \int_0^t \mathcal{E}(\tau) \,d\tau.
\end{align*}
Choosing $\eps_0$ small enough, such that $B \eps^{\gamma-\frac{d}{2}} \leq 1$, we can combine these estimates from $k=3$ up to $k=m+1$ to get
\[I_1 \leq C \Big( \int_0^t \mathcal{E}(\tau) \,d\tau + (C_g^2 + C_h^2)\eps^{2 \gamma} + B \eps^{\gamma-\frac{d}{2}} \mathcal{E}(t) + B^{m-1} \eps^{(m-1)(\gamma - \frac{d}{2})-1}\int_0^t \mathcal{E}(\tau) \,d\tau\Big)\]
for some $C=C(y_{\rm cont}, W_{\rm atom}, V, \mathcal{R}, \Omega, m)$. Choosing $\eps_0$ even smaller, we can ensure that $C B \eps^{\gamma-\frac{d}{2}} \leq \frac{1}{3}$ and $B^{m-1} \eps^{(m-1)(\gamma - \frac{d}{2})-1} \leq 1$, since $\gamma > \frac{d}{2} + \frac{1}{m-1}$ by assumption. Therefore,
\[I_1 \leq \frac{1}{3} \mathcal{E}(t) + C \Big( \int_0^t \mathcal{E}(\tau) \,d\tau + (C_g^2 + C_h^2)\eps^{2 \gamma}\Big).\]
The additional error term $I_3$ coming from the boundary conditions can be handled in a similar way. We now set
\begin{align*}
B_k(t) &= \int_0^1 \eps^d \sum_{x \in \sinto_\eps \Omega} \dot{A}_{\eps,k-1}[D_{\mathcal{R},\eps} K_\eps (g_{\rm atom}-y_{\rm ref})][D_{\mathcal{R},\eps}u]^{k-2} s^{k-3} \,ds,\\
C_k(t) &= \int_0^1\eps^d \sum_{x \in \sinto_\eps \Omega} A_{\eps,k}[D_{\mathcal{R},\eps} K_\eps (g_{\rm atom}-y_{\rm ref})][D_{\mathcal{R},\eps}u]^{k-2}[D_{\mathcal{R},\eps} \dot{y}_{\rm ref}] s^{k-3} \,ds,\\
D_k(t) &= \int_0^1\eps^d \sum_{x \in \sinto_\eps \Omega} A_{\eps,k}[D_{\mathcal{R},\eps} K_\eps (g_{\rm atom}-y_{\rm ref})][D_{\mathcal{R},\eps} u]^{k-2}[D_{\mathcal{R},\eps} \dot{u}] s^{k-2} \,ds,\\
E_k(t) &= \int_0^1\eps^d \sum_{x \in \sinto_\eps \Omega} A_{\eps,k}[D_{\mathcal{R},\eps} K_\eps (g_{\rm atom}-y_{\rm ref})][D_{\mathcal{R},\eps}u]^{k-2}\\
&\qquad [D_{\mathcal{R},\eps} (K_\eps (g_{\rm atom}-y_{\rm ref}))^\cdot] s^{k-2} \,ds,\\
F_k(t) &= \int_0^1\eps^d \sum_{x \in \sinto_\eps \Omega} A_{\eps,k}[D_{\mathcal{R},\eps} K_\eps (g_{\rm atom}-y_{\rm ref})][D_{\mathcal{R},\eps}u]^{k-1} s^{k-2} \,ds\\
G_k(t) &= \int_0^1\eps^d \sum_{x \in \sinto_\eps \Omega} A_{\eps,k}[D_{\mathcal{R},\eps} (K_\eps (g_{\rm atom}-y_{\rm ref}))^\cdot][D_{\mathcal{R},\eps}u]^{k-1} s^{k-2} \,ds.
\end{align*}
In analogy to before, we have
\[I_3 = C \Big\lvert \int_0^t D_2(\tau) \,d\tau \Big\rvert\] 
and, for $3 \leq k \leq m+1$,
\[\int_0^t B_{k}(\tau) + (k-2) D_{k-1}(\tau) + G_{k-1}(\tau) \,d\tau =  F_{k-1}(t) - F_{k-1}(0), \]
as well as
\[B_k(t) = C_k(t) + D_k(t) + E_k(t).\]
Again, we have the estimates
\begin{align*}
\Big\lvert \int_0^t C_k(\tau)\,d\tau \Big\rvert &\leq C (B \eps^{\gamma - \frac{d}{2}})^{k-3} \lVert y_{\rm cont} \rVert_{C^1(0,t; H^3(\Omega))}\Big(\int_0^t \mathcal{E}(\tau) \,d\tau + C_g^2 \eps^{2 \gamma} \Big), \\
\Big\lvert \int_0^t E_k(\tau)\,d\tau \Big\rvert &\leq C (B \eps^{\gamma - \frac{d}{2}})^{k-2} C_g^2 \eps^{2\gamma},  \\
\Big\lvert \int_0^t G_k(\tau)\,d\tau \Big\rvert &\leq C (B \eps^{\gamma - \frac{d}{2}})^{k-2} \Big(\int_0^t \mathcal{E}(\tau) \,d\tau + C_g^2 \eps^{2 \gamma} \Big),  \\
\lvert F_k(0) \rvert &\leq C (B \eps^{\gamma - \frac{d}{2}})^{k-2} (C_g^2 + C_h^2) \eps^{2 \gamma},\\
\lvert F_k(t) \rvert &\leq C (B \eps^{\gamma - \frac{d}{2}})^{k-2} (\mathcal{E}(t) +C_g^2 \eps^{2 \gamma}).
\end{align*}
Furthermore,
\begin{align*}
\Big\lvert \int_0^t D_{m+1}(\tau)\,d\tau \Big\rvert &\leq C \eps^{-1} (B \eps^{\gamma - \frac{d}{2}})^{m-1} \Big( \int_0^t \mathcal{E}(\tau) \,d\tau +C_g^2 \eps^{2 \gamma} \Big).
\end{align*}
As before this implies
\[I_3 \leq \frac{1}{3} \mathcal{E}(t) + C \Big( \int_0^t \mathcal{E}(\tau) \,d\tau + (C_g^2 + C_h^2)\eps^{2 \gamma}\Big)\]

Overall we proved
\begin{align*}
\mathcal{E}(t) &= 3 (\mathcal{E}(t) -  \frac{2}{3} \mathcal{E}(t))\\
&\leq  C \Big( (C_f^2 + C_g^2+ C_h^2 + \eps^{4-2\gamma}) \eps^{2 \gamma} + \int_0^t \mathcal{E}(\tau) \,d\tau\Big)
\end{align*}
for some $C=C(y_{\rm cont}, W_{\rm atom}, V, \mathcal{R}, \Omega, m, \gamma)$, all $\eps \leq \eps_0(y_{\rm cont}, W_{\rm atom}, V, \mathcal{R}, \Omega, m, \gamma, B)$ and $t \in [0, T_\eps)$.

Grönwall's inequality then yields
\begin{align*}
\mathcal{E}(t) &\leq C (C_f^2 + C_g^2+ C_h^2 + \eps^{4-2\gamma}) \eps^{2 \gamma} e^{Ct}\\
&\leq\frac{B^2}{2} \eps^{2 \gamma},
\end{align*}
where we have finally chosen $B:=\Big(2C (C_f^2 + C_g^2+ C_h^2 + 1) e^{CT_0}\Big)^{\frac{1}{2}}$. In particular, with $C\geq 1$ we satisfy the condition $B > \sqrt{2 C_g^2 + 2C_h^2}$, that we required at the beginning.

In particular, $\mathcal{E}(T_\eps) \leq \frac{B^2}{2} \eps^{2 \gamma}$ and therefore $T_\eps = T_0$ for $\eps \leq \eps_0$.

The convergence estimate immediately follows from the energy estimate and the estimate we assumed for the boundary conditions. Uniqueness follows directly from the local uniqueness in Theorem \ref{thm:discretelocalexistence}.
\end{proof}

\section*{Acknowledgments}
This work is part of my PhD thesis \cite{braunphdthesis}. I would like to express my gratitude to my advisor Bernd Schmidt for his guidance and support. Also, I am grateful that I had the opportunity to discuss different aspects of this work with Dirk Blömker, Christoph Ortner, and Florian Theil.

\appendix
\section{Additional Regularity of Weak Solutions to Second Order Hyperbolic Equations.}

\begin{lem} \label{lem:abscontofAuu}
Let $V$ be a real reflexive Banach space. Let $T>0$ and $A, A' \colon [0,T] \to L(V;V')$ be bounded such that for every $v_1, v_2 \in V$ the map $t \mapsto \langle A(t)v_1,v_2\rangle_{V',V}$ is absolutely continuous with derivative $\langle A'(t)v_1,v_2\rangle$. Furthermore, assume that $A(t)$ is symmetric. Let $u \in W^{1,1}(0,T;V)$. Then
\[ g \colon t \mapsto \langle A(t)u(t),u(t) \rangle_{V',V}\]
is absolutely continuous with derivative
\[g'(t) = \langle A'(t)u(t),u(t) \rangle_{V',V} + 2\langle A(t)u(t),u'(t) \rangle_{V',V}.\]
\end{lem}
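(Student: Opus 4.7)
The plan is to reduce to a finite, polynomial-like situation via a density argument, apply the scalar product rule for absolutely continuous functions there, and then pass to the limit using the uniform boundedness of $A$ and $A'$.

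First I would handle the simple case where $u(t) = \sum_{k=1}^N \phi_k(t) v_k$ with finitely many $v_k \in V$ and $\phi_k \in \mathrm{AC}([0,T])$. In this case
\[g(t) = \sum_{k,l} \phi_k(t)\phi_l(t) \langle A(t) v_k, v_l\rangle_{V',V}\]
is a finite sum of products of three bounded real absolutely continuous functions (the two $\phi$'s and, by hypothesis, $t \mapsto \langle A(t)v_k, v_l\rangle$). Hence each summand, and so $g$, is absolutely continuous and the usual Leibniz rule yields the a.e.\ derivative. Using the symmetry $\langle A(t)v_k, v_l\rangle = \langle A(t)v_l, v_k\rangle$ to collapse the two mixed terms, this a.e.\ derivative is precisely $\langle A'(t)u(t), u(t)\rangle + 2\langle A(t)u(t), u'(t)\rangle$.

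Next I would approximate any $u \in W^{1,1}(0,T;V)$ by such simple $u_n$ in the $W^{1,1}$-norm. Since $u \in C([0,T];V)$ has separable range, we may work in a separable closed subspace $V_0 \subset V$. Mollifying $u$ in $t$ (after a standard extension) gives a $C^\infty([0,T];V_0)$-approximation in $W^{1,1}$, and each such smooth $V_0$-valued curve is in turn uniformly $C^1$-approximated by finite linear combinations $\sum \phi_{n,k}(t) v_{n,k}$, e.g.\ Bernstein polynomials with coefficients from a countable dense set of $V_0$. This produces $u_n$ of the simple form above with $u_n \to u$ in $C([0,T];V)$ and $u_n' \to u'$ in $L^1(0,T;V)$.

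Finally I would pass to the limit in the identity $g_n(t) - g_n(0) = \int_0^t F_n(\tau)\,d\tau$, where $F_n = \langle A'(\cdot)u_n, u_n\rangle + 2\langle A(\cdot)u_n, u_n'\rangle$. The uniform bounds $\sup_t \|A(t)\|, \sup_t \|A'(t)\| \leq M$ together with the previous convergences give $g_n \to g$ uniformly and $F_n \to F := \langle A'u, u\rangle + 2\langle Au, u'\rangle$ in $L^1(0,T)$ via the standard split into $\langle A(\cdot)(u_n-u), u_n'\rangle$ and $\langle A(\cdot)u, u_n'-u'\rangle$ terms (and analogously for $A'$). Measurability of $F$ follows from the strong measurability of $u, u'$ and the measurability of $t \mapsto \langle A(t)v_1, v_2\rangle$, $\langle A'(t)v_1, v_2\rangle$ for fixed arguments. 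The limiting identity $g(t)-g(0) = \int_0^t F(\tau)\,d\tau$ is exactly the claim.

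The main obstacle I anticipate is the density step, since $V$ is only assumed reflexive and not separable. As indicated above this is bypassed by restricting to the separable subspace in which the continuous curve $u$ actually lives; all subsequent estimates only use the operator bounds, which are preserved under this restriction.
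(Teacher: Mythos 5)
Your proof is correct, and it takes a genuinely different route from the paper's. The paper never approximates $u$; instead it discretizes the time interval $[t_0,t_1]$ into $k$ pieces, writes a discrete Abel-summation (telescoping) identity for the step-function interpolants of $u$ and $A$, and passes to the limit $k\to\infty$ using Lebesgue's theorem, the continuity of $u$ in $V$, and the hypothesis that $t\mapsto\langle A(t)v_1,v_2\rangle$ is absolutely continuous for fixed $v_1,v_2$. Your argument instead approximates $u$ itself in $W^{1,1}(0,T;V)$ by finite sums $\sum_k \phi_k(t)v_k$, reduces the Leibniz rule to the scalar case for products of real absolutely continuous functions, and then passes to the limit in the integrated identity $g_n(t)-g_n(0)=\int_0^t F_n$; the key technical points you correctly flag are the restriction to a separable closed subspace (needed since $V$ is not assumed separable) and that $F_n\to F$ in $L^1$ follows from $u_n\to u$ uniformly, $u_n'\to u'$ in $L^1$, and the uniform operator bounds. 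Both routes are sound; yours buys a very elementary core computation (scalar product rule) at the price of the density/mollification/Bernstein machinery, while the paper's discretization is more self-contained but involves a somewhat delicate bookkeeping of shifted step functions. One small remark: you do not actually use the reflexivity of $V$ anywhere, and neither does the paper in this lemma --- that hypothesis is carried along for the subsequent theorem.
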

\begin{proof}
Let $0\leq t_0 < t_1 \leq T$. For $k\in\N$ we write $\eps = \frac{t_1-t_0}{k}$ and for any map $f$ defined on $[t_0,t_1]$ we set
\begin{align*}
\tilde{f}_k(t) &= f\big(t_0 +(l+\frac{1}{2})\eps \big) &\text{ if } t \in t_0 + [ l \eps,(l+1) \eps) \\
f_k^+(t) &= f\big(t_0 + (l+\frac{1}{2})\eps\big) &\text{ if } t \in t_0 + [(l-\frac{1}{2}\eps,(l+\frac{1}{2}) \eps)  \\
f_k^\circ(t) &= f\big(t_0 + l\eps\big) &\text{ if } t \in t_0 + [(l-\frac{1}{2}\eps,(l+\frac{1}{2}) \eps)  \\
f_k^-(t) &= f\big(t_0 + (l-\frac{1}{2}) \eps \big) &\text{ if } t \in t_0 + [(l-\frac{1}{2}\eps,(l+\frac{1}{2}) \eps) 
\end{align*}
whenever $t \in t_0 + [l\eps,(l+1) \eps)$.
With this notation one easily calculates
\begin{align*}
&\int_{t_0}^{t_1} \langle A'(t)\tilde{u}_k(t),\tilde{u}_k(t) \rangle_{V',V} \,dt = -2 \int_{t_0 + \frac{\eps}{2}}^{t_1-\frac{\eps}{2}} \langle A^\circ_k(t)u(t),u'(t) \rangle_{V',V} \,dt\\
&\qquad\langle A(t_1)u^-_k(t_1),u^-_k(t_1) \rangle_{V',V} - \langle A(t_0)u^+_k(t_0),u^+_k(t_0) \rangle_{V',V} \\
&\qquad -2 \int_{t_0 + \frac{\eps}{2}}^{t_1-\frac{\eps}{2}} \langle A^\circ_k(t)\big(\frac{u_k^+(t)+ u_k^-(t)}{2} -u(t)\big),u'(t) \rangle_{V',V} \,dt.
\end{align*}
Since $\lVert u' \rVert \in L^1(0,T)$, standard theory of absolutely continuous functions gives
\[\big\lVert \frac{u_k^+ + u_k^-}{2} -v\big\rVert_{L^\infty(t_0+\frac{\eps}{2},t_1-\frac{\eps}{2};V)} \to 0 \]
as $k \to \infty$. Since also $A^\circ_k$ is uniformly bounded, the last integral goes to $0$ as $k \to \infty$. The middle terms converge to $\langle A(t_1)u(t_1),u(t_1) \rangle_{V',V} - \langle A(t_0)u(t_0),u(t_0) \rangle_{V',V}$ due to the continuity of $u$. The integrand in the first term of the right hand side (extended by $0$ if necessary) converges pointwise, since $s \mapsto \langle A(s)u(t),u'(t)\rangle_{V',V}$ is continuous. Therefore the limit function is measurable and the integral converges by Lebesgue's theorem to
\[\int_{t_0}^{t_1} \langle A(t)u(t),u'(t) \rangle_{V',V} \,dt.\]
The left hand side, also converges by Lebesgue's theorem. In particular, the map $t \mapsto \langle A(t)u(t),u'(t)\rangle_{V',V}$ is measurable and
\begin{align*}
&\int_{t_0}^{t_1} \langle A(t)u(t),u'(t)\rangle_{V',V} + 2 \langle A(t)u(t),u'(t) \rangle_{V',V} \,dt \\
&\quad= \langle A(t_1)u(t_1),u(t_1) \rangle_{V',V} - \langle A(t_0)u(t_0),u(t_0) \rangle_{V',V}
\end{align*} 
\end{proof}

\begin{thm} \label{thm:additionalregularity}
Let $V \hookrightarrow H \hookrightarrow V'$ be a real Gelfand triple with a reflexive Banach space $V$. Let $T>0$ and $A, A' \colon [0,T] \to L(V;V')$ be bounded such that for every $v_1, v_2 \in V$ the map $t \mapsto \langle A(t)v_1,v_2\rangle_{V',V}$ is absolutely continuous with derivative $\langle A'(t)v_1,v_2\rangle$. Furthermore, assume that $A(t)$ is symmetric and satisfies the uniform Gårding inequality
\[\langle A(t)v,v\rangle_{V',V} \geq c_1 \lVert v \rVert^2_V -c_2 \lVert v \rVert^2_H \]
for some constants $c_1>0$, $c_2 \in \R$ and all $t,v$. Let $u \in L^\infty(0,T;V)$ such that the weak derivatives $u' \in L^\infty(0,T;H)$ and $u'' \in L^1(0,T;V')$ exist and we have $u''+Au =:F \in L^1(0,T;H)$.

Then $u \in C([0,T];V) \cap C^1([0,T];H)$.
\end{thm}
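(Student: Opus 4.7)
The plan is to establish the energy identity
\[
E(t) - E(s) = \int_s^t \Big( (F(\tau), u'(\tau))_H + \tfrac{1}{2}\langle A'(\tau) u(\tau), u(\tau)\rangle_{V',V} \Big)\, d\tau
\]
for $E(t) := \tfrac{1}{2}\|u'(t)\|_H^2 + \tfrac{1}{2}\langle A(t)u(t), u(t)\rangle_{V',V}$, and then combine it with the symmetry of $A$, the Gårding inequality, and weak continuity to upgrade to strong continuity of $u$ in $V$ and of $u'$ in $H$.

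First I would verify weak continuity. Since $u' \in L^\infty(0,T;H)$, the function $u \colon [0,T]\to H$ is Lipschitz, hence continuous; together with $u \in L^\infty(0,T;V)$ and the density of $H$ in $V'$ (from the Gelfand triple and reflexivity of $V$), a standard approximation argument yields $u \in C_w([0,T];V)$. From $u'' \in L^1(0,T;V')$ one gets $u' \in C([0,T];V')$, which combined with the $L^\infty(H)$-bound upgrades to $u' \in C_w([0,T];H)$.

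Next, the energy identity. Because $u'$ is not itself an admissible test in $V$, I would regularize in time: extend $u$ beyond $[0,T]$ (e.g.\ by reflection) and set $u_\eps := u * \rho_\eps$ with a standard mollifier $\rho_\eps$. Then $u_\eps$ takes values smoothly in $V$, and $u_\eps'' + Au_\eps = F_\eps + r_\eps$ with commutator $r_\eps := Au_\eps - (Au)_\eps$. Testing against $u_\eps'$ (now valid in $V$) and applying Lemma \ref{lem:abscontofAuu} to $u_\eps$ gives
\[
E_\eps(t) - E_\eps(s) = \int_s^t \Big( (F_\eps, u_\eps')_H + \tfrac{1}{2}\langle A' u_\eps, u_\eps\rangle \Big)\, d\tau + \int_s^t \langle r_\eps, u_\eps'\rangle\, d\tau,
\]
and one passes to the limit $\eps \to 0$ along Lebesgue points of $E$. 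The main obstacle is controlling the commutator: pointwise
\[
r_\eps(\tau) = \int \rho_\eps(\tau-\sigma)\big(A(\tau) - A(\sigma)\big) u(\sigma)\, d\sigma,
\]
which has size $O(\eps)$ in $V'$ thanks to $\|A(\tau) - A(\sigma)\|_{L(V,V')} \leq \|A'\|_\infty |\tau-\sigma|$. The subtlety is that $u_\eps'$ is only bounded in $H \subsetneq V$, so the pairing $\langle r_\eps, u_\eps'\rangle$ must be handled carefully; writing $A(\tau)-A(\sigma) = \int_\sigma^\tau A'(\theta)\, d\theta$ and swapping an integration back onto $u_\eps'$ (using that $\int \rho_\eps'(\tau-\sigma)\,u(\sigma)\,d\sigma = u_\eps'(\tau)$) turns this term into a vanishing $L^1$-contribution.

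With the energy identity in hand, $E$ is absolutely continuous on $[0,T]$. For $t_0 \in [0,T]$ set
\[
\phi(t) := \|u'(t) - u'(t_0)\|_H^2 + \langle A(t)(u(t) - u(t_0)), u(t) - u(t_0)\rangle.
\]
Expanding using the symmetry of $A$,
\[
\phi(t) = 2E(t) - 2(u'(t), u'(t_0))_H - 2\langle A(t) u(t), u(t_0)\rangle + \|u'(t_0)\|_H^2 + \langle A(t) u(t_0), u(t_0)\rangle.
\]
Each term has a clean limit as $t \to t_0$: $E(t) \to E(t_0)$ by continuity of the energy; $(u'(t), u'(t_0))_H \to \|u'(t_0)\|_H^2$ by weak continuity of $u'$ in $H$; $\langle A(t) u(t), u(t_0)\rangle = \langle u(t), A(t) u(t_0)\rangle \to \langle u(t_0), A(t_0) u(t_0)\rangle$, combining weak continuity of $u$ in $V$ with the fact that $A(t)u(t_0) \to A(t_0) u(t_0)$ strongly in $V'$ (absolute continuity of $t \mapsto A(t)v$ for fixed $v \in V$); and the last term behaves analogously. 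All contributions cancel, giving $\phi(t) \to 0$. The Gårding inequality then yields
\[
c_1 \|u(t) - u(t_0)\|_V^2 \leq \phi(t) + c_2 \|u(t) - u(t_0)\|_H^2 \longrightarrow 0,
\]
since $u$ is already continuous in $H$, proving $u \in C([0,T];V)$. The first summand of $\phi$ alone gives $u' \in C([0,T];H)$.
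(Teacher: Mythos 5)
Your overall strategy is the same as the paper's (which itself follows Strauss): first prove that the energy $E(t)=\|u'(t)\|_H^2+\langle A(t)u(t),u(t)\rangle$ is absolutely continuous with $E'=2(F,u')_H+\langle A'u,u\rangle$ via temporal mollification and Lemma \ref{lem:abscontofAuu}, then combine the energy continuity with weak continuity of $u$ in $V$ and of $u'$ in $H$ and the G\aa rding inequality to upgrade to strong continuity. The central analytic difficulty is also the same in both, namely that the commutator $Au_\eps-(Au)_\eps$ is only $O(\eps)$ in $V'$ while $u_\eps'$ is only $O(1/\eps)$ in $V$; the cancellation relies on the stronger fact that $\eps\,\eta_\eps'*u\to 0$ in $L^2(V)$, which you gesture at but the paper proves by density. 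So far, same approach.

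However, there is a genuine gap in how you treat the endpoints of $[0,T]$. Extending $u$ past $[0,T]$ ``by reflection'' does not preserve the hypothesis $u''+Au=F\in L^1(0,T;H)$: even reflection around $t=0$ forces $u'$ to be odd, so $u''$ acquires a singular part $2u'(0)\delta_0$ (not in $L^1(V')$); odd reflection around $u(0)$ keeps $u'$ continuous but turns the forcing near $t<0$ into $-F(-t)+A(-t)u(-t)+A(t)(2u(0)-u(-t))$, which lies in $L^1(V')$ but not in $L^1(H)$ since $Au$ need not take values in $H$. Consequently your argument gives the energy identity only for interior Lebesgue points $s,t\in(0,T)$, and thus only an absolutely continuous representative $\tilde E$ on $[0,T]$; it does not show $\tilde E(0)=E(0)$ for the actual value $E(0)=\|u'(0)\|_H^2+\langle A(0)u(0),u(0)\rangle$. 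Weak lower semicontinuity of $v\mapsto\langle A(0)v,v\rangle+c_2\|v\|_H^2$ gives only the inequality $E(0)\le\tilde E(0)$, and if it is strict, your $\phi$-argument yields $\lim_{t\to 0^+}\phi(t)=\tilde E(0)-E(0)>0$, so strong continuity at the endpoints is not established. The paper avoids this by never extending $u$ beyond $[0,T]$: it inserts a cutoff $\theta_\delta$ supported in $[t_0,t_1]\subset[0,T]$, sends $\delta\to 0$ to reach the sharp characteristic function, and then sends $\eps\to 0$, in which limit the boundary terms $(\eta_\eps*\eta_\eps*(\chi_{[t_0,t_1]}u')(t_0),u'(t_0))_H\to\tfrac12\|u'(t_0)\|_H^2$ are recovered from the \emph{weak} continuity of $u'$ in $H$. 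This two-parameter construction is precisely what keeps the energy identity valid up to and including $t=0$ and $t=T$, and it is what is missing from your sketch.
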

\begin{proof}
The basic result goes back to \cite{strauss66} and we follow their ideas. Note though, that we have weaker assumptions on $A$ and $A'$. We also want to fix some small flaws in their proof.

Our main claim is that the energy
\[E(t)= \lVert u'(t) \rVert_H^2 + \langle A(t)u(t),u(t)\rangle_{V',V} \]
is continuous in $[0,T]$ even though $u'$ does not necessarily take values in $V$ as in Lemma \ref{lem:abscontofAuu}. For the moment, let us assume this claim is true and show that it proves the theorem.

Let $t_n \to t$. Clearly, $u \in C([0,T];H)\cap C^1([0,T];V')$. Therefore, $u(t_n) \to u(t)$ in $H$ and $u'(t_n) \to u'(t)$ in $V'$. Since we also have $u \in L^\infty(0,T;V)$ and $u' \in L^\infty(0,T;H)$ and since $V$ is reflexive, we get $u(t_n) \wto u(t)$ in $V$ and $u'(t_n) \wto u'(t)$ in $H$. The continuity of the energy now ensures that this weak convergence is actually strong convergence. In more detail,
\begin{align*}
&\lVert u'(t)-u'(t_n) \rVert_H^2 + \langle A(t) (u(t)-u(t_n)),u(t)-u(t_n)\rangle_{V',V}\\
&\qquad = E(t) + E(t_n) - 2 (u'(t),u'(t_n))_H - 2 \langle A(t)u(t),u(t_n)\rangle_{V',V}\\
&\qquad \to 0.
\end{align*}
Therefore, $u'(t_n) \to u'(t)$ in $H$ and, using the Gårding inequality, $u(t_n) \to u(t)$ in $V$.

Now we come to the main part of the proof. We have to show that the energy is continuous in $[0,T]$. Actually, we will even show that $E$ is absolutely continuous with
\[E'(t) = \langle A'(t)u(t),u(t) \rangle_{V',V} +2(F(t),u'(t))_H.\]
Fix $0\leq t_0<t_1\leq T$, let $\delta,\eps >0$ with $2\delta < t_1-t_0$ and define a continuous cutoff $\theta_\delta$ by setting $\theta_\delta(t)=1$ for $t \in [t_0 + \delta, t_1 - \delta]$, $\theta_\delta(t)=0$ for $t \notin [t_0 , t_1]$ and affine on each of the two remaining intervals. Let $\eta$ be the standard smoothing kernel and as always $\eta_\eps(t) = \eps^{-1} \eta(\frac{t}{\eps})$. Set $w_1:=\eta_\eps \ast (\theta_\delta u') \in C_c^\infty (\R;H)$ and $w_2:=\eta_\eps \ast (\theta_\delta u) \in C_c^\infty (\R;V)$. If we extend $A$ by $A(0)$ to the left and $A(T)$ to the right with $A'(t)=0$ outside of $[0,T]$, we can use Lemma \ref{lem:abscontofAuu} on some larger interval and get
\[0= \int_\R 2 (w_1(t),w_1'(t))_H + \langle A'(t) w_2(t),w_2(t)\rangle + 2 \langle A(t) w_2(t),w_2'(t)\rangle \,dt.\]
That is,
\begin{align*}
0&= \int_\R 2 (\eta_\eps \ast (\theta_\delta u'),\eta_\eps' \ast (\theta_\delta u'))_H + \langle A' \eta_\eps \ast (\theta_\delta u),\eta_\eps \ast (\theta_\delta u)\rangle_{V',V}\\
&\quad +2 \langle A \eta_\eps \ast (\theta_\delta u),\eta_\eps' \ast (\theta_\delta u))\rangle_{V',V} \,dt.
\end{align*}
Since
\[\eta_\eps' \ast (\theta_\delta u') = \eta_\eps \ast (\theta_\delta' u') + \eta_\eps \ast (\theta_\delta u'') \]
pointwise in $V'$, we see that $\eta_\eps \ast (\theta_\delta u'')$ actually takes values in $H$ and is even bounded in $L^\infty(\R;H)$ for fixed $\eps$ and varying $\delta$, since $\theta_\delta$ and $\theta_\delta'$ are bounded in $L^1(\R)$. Similarly,
\[\eta_\eps' \ast (\theta_\delta u) = \eta_\eps \ast (\theta_\delta' u) + \eta_\eps \ast (\theta_\delta u') \]
and $\eta_\eps \ast (\theta_\delta u')$ is bounded in $L^\infty(\R;V)$ for fixed $\eps$ and varying $\delta$.

Thus, we can rewrite the above equality and get
\begin{align*}
0&= \int_\R 2 (\eta_\eps \ast (\theta_\delta u'),\eta_\eps \ast (\theta_\delta' u'))_H + 2 \langle \eta_\eps \ast (\theta_\delta u''), \eta_\eps \ast (\theta_\delta u')\rangle_{V',V}\\
&\quad + \langle A' (\eta_\eps \ast (\theta_\delta u)),\eta_\eps \ast (\theta_\delta u)\rangle_{V',V} + 2 \langle A (\eta_\eps \ast (\theta_\delta u)) -\eta_\eps \ast (\theta_\delta Au),\eta_\eps' \ast (\theta_\delta u))\rangle_{V',V}\\
&\quad +2 \langle \eta_\eps \ast (\theta_\delta Au),\eta_\eps \ast (\theta_\delta' u))\rangle_{V',V} +2 \langle \eta_\eps \ast (\theta_\delta Au),\eta_\eps \ast (\theta_\delta u'))\rangle_{V',V} \,dt.
\end{align*}
Now we want to let $\delta \to 0$. Since $\theta_\delta \to \chi_{[t_0, t_1]}$ in the $(L^\infty,L^1)$-Mackey topology, we have
\begin{align*}
\eta_\eps \ast (\theta_\delta u) &\to \eta_\eps \ast (\chi_{[t_0,t_1]} u)&\text{in } L^\infty(\R;V)\\
\eta_\eps \ast (\theta_\delta u') &\to \eta_\eps \ast (\chi_{[t_0,t_1]} u')&\text{in } L^\infty(\R;H)\\
\eta_\eps \ast (\theta_\delta F) &\to \eta_\eps \ast (\chi_{[t_0,t_1]} F)&\text{in } L^\infty(\R;H)\\
\eta_\eps \ast (\theta_\delta Au) &\to \eta_\eps \ast (\chi_{[t_0,t_1]} Au)&\text{in } L^\infty(\R;V')\\
\eta_\eps' \ast (\theta_\delta u) &\to \eta_\eps' \ast (\chi_{[t_0,t_1]} u)&\text{in } L^\infty(\R;V).
\end{align*}
Therefore, we have
\begin{align*}
0&= \lim_{\delta \to 0}\int_\R 2 (\eta_\eps \ast \eta_\eps \ast (\chi_{[t_0,t_1]} u'),\theta_\delta' u')_H + 2 ( \eta_\eps \ast (\chi_{[t_0,t_1]} F), \eta_\eps \ast (\chi_{[t_0,t_1]} u'))_H\\
&\quad + \langle A' (\eta_\eps \ast (\chi_{[t_0,t_1]} u)),\eta_\eps \ast (\chi_{[t_0,t_1]} u)\rangle_{V',V}\\
&\quad + 2 \langle A (\eta_\eps \ast (\chi_{[t_0,t_1]} u)) -\eta_\eps \ast (\chi_{[t_0,t_1]} Au),\eta_\eps' \ast (\chi_{[t_0,t_1]} u))\rangle_{V',V}\\
&\quad +2 \langle \eta_\eps \ast \eta_\eps \ast (\chi_{[t_0,t_1]} Au),\theta_\delta' u)\rangle_{V',V}\,dt.
\end{align*}
The maps
\[t \mapsto (\eta_\eps \ast \eta_\eps \ast (\chi_{[t_0,t_1]} u')(t),u'(t))_H\]
and
\[t \mapsto \langle \eta_\eps \ast \eta_\eps \ast (\chi_{[t_0,t_1]} Au)(t),u(t)\rangle_{V',V}\]
are continuous in $[t_0,t_1]$ as a product of a continuous and a weakly continuous function and as a product of a continuous and a weak-$\ast$ continuous function, respectively. Plugging in the actual values of $\theta_\delta'$, we therefore get
\begin{align*}
0&= 2(\eta_\eps \ast \eta_\eps \ast (\chi_{[t_0,t_1]} u')(t_0),u'(t_0))_H - 2(\eta_\eps \ast \eta_\eps \ast (\chi_{[t_0,t_1]} u')(t_1),u'(t_1))_H\\
&\quad+ 2\langle \eta_\eps \ast \eta_\eps \ast (\chi_{[t_0,t_1]} Au)(t_0),u(t_0)\rangle_{V',V} - 2\langle \eta_\eps \ast \eta_\eps \ast (\chi_{[t_0,t_1]} Au)(t_1),u(t_1)\rangle_{V',V}\\
&\quad +\int_\R 2 ( \eta_\eps \ast (\chi_{[t_0,t_1]} F), \eta_\eps \ast (\chi_{[t_0,t_1]} u'))_H\\
&\quad + \langle A' (\eta_\eps \ast (\chi_{[t_0,t_1]} u)),\eta_\eps \ast (\chi_{[t_0,t_1]} u)\rangle_{V',V}\\
&\quad + 2 \langle A (\eta_\eps \ast (\chi_{[t_0,t_1]} u)) -\eta_\eps \ast (\chi_{[t_0,t_1]} Au),\eta_\eps' \ast (\chi_{[t_0,t_1]} u))\rangle_{V',V}\,dt.
\end{align*}
Next, we want to send $\eps \to 0$. For any $v \in C^1_c(\R;V)$, we have $\eps \eta_\eps' \ast v \to 0$ in $L^2(\R;V)$. Therefore, for any such $v$,
\[\limsup_{\eps \to 0} \lVert \eps \eta_\eps' \ast (\chi_{[t_0,t_1]} u) \rVert_{L^2(\R;V)} \leq \lVert \eta' \rVert_{L^1(\R)} \lVert \chi_{[t_0,t_1]} u - v\rVert_{L^2(\R;V)}.\]
Hence, $\eps \eta_\eps' \ast (\chi_{[t_0,t_1]} u) \to 0$ in $L^2(\R;V)$. At the same time
\[\lVert A (\eta_\eps \ast (\chi_{[t_0,t_1]} u)) -\eta_\eps \ast (\chi_{[t_0,t_1]} Au) \rVert_{L^2(\R; V')} \leq \eps \lVert A' \rVert_{L^\infty} \lVert \eta_\eps \ast (\chi_{[t_0,t_1]} u) \rVert_{L^2(\R; V')}\]
and so the last term in the integral goes to $0$.

Now $\eta_\eps \ast \eta_\eps \geq 0$ with $\int_0^\infty \eta_\eps \ast \eta_\eps \,dt = \frac{1}{2}$. Therefore, by Lebesgue's theorem and the weak continuity of $u'$ in $H$,
\begin{align*}
(\eta_\eps \ast \eta_\eps \ast (\chi_{[t_0,t_1]} u')(t_0),u'(t_0))_H &= \int_0^\infty (u'(t_0+\eps s),u'(t_0))_H\ \eta \ast \eta (s)\,ds\\
&\to \int_0^\infty (u'(t_0),u'(t_0))_H\ \eta \ast \eta (s)\,ds\\
&= \frac{1}{2} (u'(t_0),u'(t_0))_H .
\end{align*}
Similar results hold for the other terms and we conclude that
\begin{align*}
E(t_1) -E(t_0) = \int_{t_0}^{t_1} 2(F,u')_H + \langle A'u,u \rangle_{V',V} \,dt
\end{align*}
for any $0 \leq t_0<t_1 \leq T$.
\end{proof}

\section{Multiplication of Sobolev Functions}

The following Lemma is very useful to control products of Sobolev functions with the same integrability exponent $p$.

\begin{lem} \label{lem:prodofsobolev}
Let $1\leq p<\infty$ and $K,M \in \N$, such that $K \geq 1$ and $M  > \frac{d}{p}$and let $\lambda_k \in \N_0$ for $1 \leq k \leq K$ with $\sum_{k=1}^K \lambda_k =:N \leq M$. Let $\Omega \subset \R^d$ be open and bounded with $\partial \Omega$ Lipschitz. Then there is a $C>0$ such that for any $f_k \in W^{M-\lambda_k,p}$ we have $\prod_{k=1}^K f_k \in W^{M-N,p}(\Omega)$
with
\[\big\lVert \prod_{k=1}^K f_k \big\rVert_{W^{M-N,p}(\Omega)} \leq C \prod_{k=1}^K \lVert f_k \rVert_{W^{M-\lambda_k,p}(\Omega)}. \]
Additionally, the product mapping is continuous even from the weak topologies on the $W^{M-\lambda_k,p}(\Omega)$ to the strong topology on $W^{L,p}(\Omega)$ if either $L < M - N$ or $L=M - N$ and $\lambda_k < N$ for all $k$.

In particular, $W^{M,p}(\Omega)$ is a Banach algebra.
\end{lem}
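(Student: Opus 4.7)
The plan is to reduce the problem to an $L^p$-estimate on products of derivatives via the Leibniz rule, and then apply Hölder's inequality together with the Sobolev embedding theorem on the Lipschitz domain $\Omega$. First I would approximate each $f_k$ by a smooth sequence strongly in $W^{M-\lambda_k,p}(\Omega)$ (e.g., by Stein extension followed by mollification) so that for the approximants the Leibniz rule expands $D^\alpha(\prod_k f_k)$, for each multi-index $\alpha$ with $\lvert \alpha \rvert \leq M-N$, into a finite sum of terms $c_\beta \prod_k D^{\beta_k} f_k$ with $\sum_k \lvert \beta_k \rvert = \lvert \alpha \rvert$. If each such term is controlled in $L^p(\Omega)$ by $C \prod_k \lVert f_k \rVert_{W^{M-\lambda_k,p}(\Omega)}$ uniformly in the approximation, the estimate passes to the limit and identifies the weak derivative of the product.

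Fix one such term and set $s_k := M - \lambda_k - \lvert \beta_k \rvert \geq 0$, so that $D^{\beta_k} f_k \in W^{s_k,p}(\Omega)$. Because $\sum_k \lambda_k + \sum_k \lvert \beta_k \rvert \leq N + (M-N) = M$, one has
\[\sum_k s_k = KM - N - \lvert \alpha \rvert \geq (K-1) M.\]
By Sobolev embedding, $W^{s_k,p}(\Omega) \hookrightarrow L^{q_k}(\Omega)$ with $1/q_k = (1/p - s_k/d)_+$ in the subcritical case, into $L^\infty$ for $s_k > d/p$, and into every $L^{q_k}$ with $q_k < \infty$ at the critical exponent $s_k = d/p$. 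The goal is to choose the $q_k$ with $\sum_k 1/q_k \leq 1/p$ and then apply Hölder. Letting $J = \{k : s_k \leq d/p\}$, the uniform bound $s_k \leq M$ gives $\sum_{k\notin J} s_k \leq (K-\lvert J \rvert) M$, hence
\[\sum_{k \in J} s_k \geq (K-1)M - (K - \lvert J \rvert)M = (\lvert J \rvert - 1) M > (\lvert J \rvert - 1) \frac{d}{p},\]
where the strict inequality uses $M > d/p$. This yields $\sum_k 1/q_k \leq 1/p$ with room to spare for the borderline critical case, and Hölder then closes the estimate.

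For the weak-to-strong continuity statement, I would use Rellich--Kondrachov compactness to upgrade weak convergence $f_k^{(n)} \wto f_k$ in $W^{M-\lambda_k,p}(\Omega)$ to strong convergence in a slightly weaker Sobolev norm. Telescoping $\prod_k f_k^{(n)} - \prod_k f_k$ produces a sum of products in which exactly one factor is a difference $f_{k_0}^{(n)} - f_{k_0}$. For each summand and each $\alpha$ with $\lvert \alpha \rvert \leq L$, the Leibniz/Hölder argument above still applies, but now one may replace the $L^{q_{k_0}}$-norm of $D^{\beta_{k_0}}(f_{k_0}^{(n)} - f_{k_0})$ by a slightly subcritical $L^{\tilde q_{k_0}}$-norm, thanks to the strict inequality above; that subcritical embedding is compact, so weak convergence becomes strong and the entire summand tends to zero. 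The hypothesis that $L < M - N$, or $L = M - N$ with $\lambda_k < N$ for all $k$, is exactly what forces $\lvert \beta_{k_0} \rvert < M - \lambda_{k_0}$, i.e.\ $s_{k_0} \geq 1$, for the difference factor in every summand, which is the integer-order regularity margin needed to invoke Rellich.

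The main obstacle I expect is the careful bookkeeping of the borderline Sobolev cases $s_k = d/p$, together with the strict-vs-non-strict inequalities in the counting argument; in particular in the continuity part one has to verify that for every summand of the telescoping expansion and every Leibniz term, the difference factor genuinely lands in a compact Sobolev embedding and the remaining factors absorb the loss without destroying $\sum_k 1/q_k \leq 1/p$.
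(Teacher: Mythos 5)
Your proof is correct and takes essentially the same approach as the paper: approximate by smooth functions, reduce via the Leibniz rule to an $L^p$ estimate for products of derivatives, then combine Sobolev embeddings with Hölder's inequality via a counting argument hinging on $M>\tfrac{d}{p}$, and exploit compactness of the relevant (sub)critical Sobolev embeddings for the weak-to-strong continuity. The paper organizes the count as an explicit case analysis on the subcritical and critical index sets $A,B$ rather than your single inequality $\sum_{k\in J}s_k\ge(|J|-1)M$, and your chain $(|J|-1)M>(|J|-1)\tfrac{d}{p}$ is only literally strict for $|J|\ge 2$ (the cases $|J|\le 1$ are trivially fine), but these are cosmetic differences and the substance matches.
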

\begin{proof}
By density, it suffices to consider functions in $C^{\infty}(\overline{\Omega})$. Furthermore, by the product rule, it suffices to prove $\prod_{k=1}^K f_k \in L^p(\Omega)$
with
\[\big\lVert \prod_{k=1}^K f_k \big\rVert_{L^p(\Omega)} \leq C \prod_{k=1}^K \lVert f_k \rVert_{W^{M-\lambda_k,p}(\Omega)}. \]
Standard embedding theorems give $W^{M-l,p} \hookrightarrow L^\infty$ for $M-l > \frac{d}{p}$,  $W^{M-\lambda,p} \hookrightarrow L^q$ for any $1 \leq q < \infty$ and $M-l = \frac{d}{p}$ and $W^{M-l,p} \hookrightarrow L^q$ for $M-l < \frac{d}{p}$ and $q \leq \frac{d}{\frac{d}{p}-(M-l)}$. Let us write
\begin{align*}
A&= \{k \colon \lambda_k > M - \frac{d}{p}\} \\
B& = \{k \colon \lambda_k = M - \frac{d}{p}\}.
\end{align*}
Hölder's inequality now establishes the claim if either $B= \emptyset$ and $\sum_{k \in A} \frac{\frac{d}{p}-M+\lambda_k}{\frac{d}{p}} \leq 1$ or $B \neq \emptyset$ and $\sum_{k \in A} \frac{\frac{d}{p}-M+\lambda_k}{\frac{d}{p}} < 1$. This is trivially true if $A = \emptyset$ or if $\lvert A \rvert = 1$ and $B=\emptyset$. If now $\lvert A \rvert = 1$ but $B \neq \emptyset$, writing $A = \{k_0\}$, we find that $\lambda_{k_0} \leq N - (M-\frac{d}{2}) < N \leq M$ and the condition is satisfied. Finally, if $\lvert A \rvert \geq 2$ then
\[\sum_{k \in A} \frac{\frac{d}{2}-M+\lambda_k}{\frac{d}{2}} \leq  \frac{\lvert A \rvert (\frac{d}{2}-M) + N}{\frac{d}{2}} \leq \frac{2 (\frac{d}{2}-M) + M}{\frac{d}{2}} = 1 - \frac{M-\frac{d}{2}}{\frac{d}{2}} <1.\]
For the additional claim, note that, even after taking up to $L$ derivatives with the product rule, we always have $\lambda_k <M$, so that all the functions are embedded into some space of lower differentiability. The embeddings were already compact if $M- \lambda_k \geq \frac{d}{p}$. To control the other case we just have to make sure that we always have the strict inequality $\sum_{k \in A} \frac{\frac{d}{p}-M+\lambda_k}{\frac{d}{p}} < 1$. This was only unclear in the case $\lvert A \rvert = 1$, $B = \emptyset$. But since $\lambda_k < M$, we now have a strict inequality in this case too.
\end{proof}

We also need the following multivariate version of the Faà di Bruno formula:
\begin{lem} \label{lem:multivariateFaaDiBruno}
Let $n,d,k,l\in \N$. Let $\Omega \subset \R^d$ open, $g \in C^n(\Omega; \R^k)$ and $f \in C^n(\R^k;\R^l)$. Then $f \circ g \in C^n(\Omega; \R^l)$ with
\[D^\alpha (f \circ g)(x) = \sum\limits_{\substack{\beta \in \N_0^k \\ 1 \leq \lvert \beta \rvert \leq \lvert \alpha \rvert}} D^\beta f(g(x)) \sum_{s=1}^{\lvert \alpha \rvert}\sum\limits_{p_s(\alpha,\beta)} \alpha! \prod_{j=1}^s\frac{(D^{\gamma_j}g(x))^{\lambda_j}}{\lambda_j! (\gamma_j!)^{\lvert \lambda_j \rvert}} \]
for all $x \in \Omega$ and $\alpha \in \N_0^d$ with $1 \leq \lvert \alpha \rvert \leq n$, where
\begin{align*}
p_s(\alpha,\beta)&=\Big\{ (\lambda_1,\dots,\lambda_s;\gamma_1,\dots,\gamma_s) \colon \lambda_j \in \N_0^k, \gamma_j \in \N_0^d,\\
&\quad 0\prec \gamma_1 \prec \dots \prec \gamma_s, \lvert \lambda_j \rvert > 0, \sum_{j=1}^s \lambda_j = \beta, \sum_{j=1}^s \gamma_j \lvert \lambda_j \rvert = \alpha  \Big\}
\end{align*}
and $\gamma_1 \prec \gamma_2$ if and only if $\lvert \gamma_1 \rvert< \lvert \gamma_2 \rvert$ or $\lvert \gamma_1 \rvert = \lvert \gamma_2 \rvert$ and, for some $j$, 
\[(\gamma_1)_1 = (\gamma_2)_1, \dots, (\gamma_1)_{j-1} = (\gamma_2)_{j-1} \text{ and } (\gamma_1)_j < (\gamma_2)_j.\]
\end{lem}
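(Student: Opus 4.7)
The plan is to reduce to the scalar case $l=1$ and then proceed by induction on $\lvert \alpha \rvert$. Since partial derivatives act componentwise, it suffices to prove the formula for each component $(f\circ g)_q = f_q \circ g$. The fact that $f \circ g \in C^n(\Omega;\R^l)$ is standard from the ordinary chain rule applied iteratively; all the work goes into establishing the explicit formula on the right-hand side.

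For the base case $\lvert \alpha \rvert = 1$, write $\alpha = e_i$. The only admissible configuration has $s=1$, $\lvert \beta \rvert = 1$ with $\beta = e_m$ for some $m \in \{1,\dots,k\}$, and then necessarily $\lambda_1 = e_m$ and $\gamma_1 = e_i$, so $\alpha! = \lambda_1! = \gamma_1! = 1$. The right-hand side collapses to $\sum_{m=1}^k \partial_{y_m} f(g(x))\, \partial_i g_m(x)$, which is exactly $\partial_i (f \circ g)(x)$ by the classical chain rule. For the inductive step, I would assume the formula for all multi-indices of order $\lvert \alpha \rvert$, fix a coordinate direction $e_i$, and apply $\partial_i$ to both sides. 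The product rule produces two types of contributions: one where $\partial_i$ hits $D^\beta f(g(x))$, yielding $\sum_{m=1}^k D^{\beta + e_m} f(g(x))\, \partial_i g_m(x)$ and thus increasing $\lvert \beta \rvert$ by one while appending a factor $D^{e_i} g_m$; and one where $\partial_i$ differentiates one of the $(D^{\gamma_j} g)^{\lambda_j}$ factors, which replaces a single $D^{\gamma_j} g_m$ factor by $D^{\gamma_j + e_i} g_m$, possibly merging with an already present $\gamma_{j'} = \gamma_j + e_i$ or creating a new $\gamma$-value that must be reinserted into the ordering $\prec$.

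The main obstacle is the combinatorial bookkeeping: one must check that, after grouping identical $\gamma$-values and relabeling, each configuration in $p_s(\alpha + e_i, \beta)$ arises exactly with the correct coefficient $(\alpha + e_i)!/(\prod_j \tilde\lambda_j! (\tilde\gamma_j!)^{\lvert \tilde\lambda_j \rvert})$. The verification splits naturally into the case where $e_i$ enters as a brand-new $\gamma_1 = e_i$ (tracked by the contribution from differentiating $f$, which is why $\lvert \beta \rvert$ necessarily increases by one in that branch) and the case where an existing $\gamma_j$ is incremented to $\gamma_j + e_i$ (tracked by differentiating the product of $g$-derivatives). A careful count of the number of factors $(D^{\gamma_j} g_m)$ available inside $(D^{\gamma_j} g)^{\lambda_j}$, namely $(\lambda_j)_m$, together with the standard multinomial identity $\alpha_i + 1 = \sum_j (\text{contribution of }\gamma_j \text{ in direction } i)$ coming from $\sum_j \gamma_j \lvert \lambda_j \rvert = \alpha + e_i$, makes the coefficients match. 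This is the multivariate generalization of the one-dimensional Faà di Bruno identity; I would either carry out the bookkeeping by hand in the style of Constantine–Savits or invoke their result directly.
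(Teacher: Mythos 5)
The paper does not actually prove this lemma: it simply remarks that the result ``can be found in \cite{constantinesavits96}'' and leaves it there. Your proposal therefore goes further than the paper by sketching the inductive argument, and your closing option of ``invoking their result directly'' coincides with what the paper actually does.

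As for the sketch itself: the base case and the reduction to $l=1$ are fine, and the two branches you identify for the inductive step (hitting $D^\beta f\circ g$ versus hitting one of the $(D^{\gamma_j}g)^{\lambda_j}$ factors) are exactly the right decomposition. But you have correctly located where the real work is and then not done it --- ``a careful count \dots makes the coefficients match'' is precisely the combinatorial identity that constitutes the proof in Constantine--Savits, and it is not a triviality. In particular, when an existing $\gamma_j$ is incremented to $\gamma_j+e_i$ and this collides with some $\gamma_{j'}$ already present, the multiplicities $\lambda_j$ and $\lambda_{j'}$ get merged in a way that must be reconciled against the normalization $\alpha!/\bigl(\prod_j \lambda_j!\,(\gamma_j!)^{|\lambda_j|}\bigr)$, and verifying that every configuration in $p_s(\alpha+e_i,\beta)$ is produced with exactly the right weight is the heart of the matter. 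If you intend this as a standalone proof, that count must be written out; if you intend it as a citation to \cite{constantinesavits96} with an explanatory gloss, it matches what the paper does and is adequate.
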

\begin{proof}
Even though it is quite possible that the result itself is much older, it can be found in \cite{constantinesavits96}.
\end{proof}
As a corollary we get the following statements.
\begin{cor} \label{cor:FaadiBruno}
Let $n,d,k,l\in \N$. There are $C=C(n,d,k,l)>0$ such that the following holds. Let $\Omega \subset \R^d$ open, $g \in C^n(\Omega; \R^k)$ and $f \in C^n(\R^k;\R^l)$. Then
\[\lvert D^n(f \circ g) (x) \rvert \leq C \sum_{s=1}^n \lvert D^s f(g(x)) \rvert \sum_{\substack{l_1, \dots, l_s \geq 1 \\ l_1+ \dots+ l_s = n}} \prod_{j=1}^s \lvert D^{(l_j)}g(x)\rvert,\]
for all $x\in\Omega$. Furthermore, if $f$ and all its derivatives are uniformly continuous and $h \in C^n(\Omega; \R^k)$ then
\begin{align*}
&\lvert D^n(f \circ (g+h)) (x) - D^n(f \circ g) (x) \rvert \\
&\quad \leq C \sum_{s=1}^n \sum_{\substack{l_1, \dots, l_s \geq 1 \\ l_1+ \dots+ l_s = n}} \omega_{D^s f}(\lvert h(x) \rvert) \prod_{j=1}^s \lvert D^{l_j}g(x)\rvert \\
&\qquad+C \sum_{s=1}^n  \sum_{\substack{l_1, \dots, l_s \geq 1 \\ l_1+ \dots+ l_s = n}} \sum_{m=1}^s \lvert D^s f(g(x)+h(x)) \rvert \prod_{j=1}^m \lvert D^{l_j}h(x)\rvert \prod_{j=m+1}^s \lvert D^{l_j}g(x)\rvert
\end{align*} 
\end{cor}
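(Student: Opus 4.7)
My plan is to derive both estimates directly from the multivariate Faà di Bruno formula in Lemma \ref{lem:multivariateFaaDiBruno}, treating the first bound as essentially a triangle-inequality repackaging of that identity, and the second bound as a two-step telescoping that separates a change-of-basepoint error (handled by the modulus of continuity $\omega_{D^s f}$) from a change-of-argument error (handled by a multilinear expansion that isolates factors of $h$).

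For the first bound I would fix any multi-index $\alpha$ with $\lvert\alpha\rvert=n$ and apply Lemma \ref{lem:multivariateFaaDiBruno}. In each summand $D^\beta f(g(x))\,\prod_{j=1}^s (D^{\gamma_j}g(x))^{\lambda_j}$, I would estimate $\lvert(D^{\gamma_j}g)^{\lambda_j}\rvert \le \lvert D^{\lvert\gamma_j\rvert}g\rvert^{\lvert\lambda_j\rvert}$ and then re-index the product: list the orders $\lvert\gamma_j\rvert$ repeated $\lvert\lambda_j\rvert$ times as $l_1,\dots,l_{\lvert\beta\rvert}$, all $\geq 1$, satisfying $\sum l_k = \sum_j \lvert\gamma_j\rvert\lvert\lambda_j\rvert = \lvert\alpha\rvert = n$. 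Absorbing the combinatorial factors $\alpha!/(\lambda_j!(\gamma_j!)^{\lvert\lambda_j\rvert})$ and the number of admissible tuples $p_s(\alpha,\beta)$ into a constant $C(n,d,k,l)$ yields the claimed pointwise estimate with the outer sum indexed by $s=\lvert\beta\rvert\in\{1,\dots,n\}$.

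For the second bound I would write
\[
D^n(f\circ(g+h)) - D^n(f\circ g) = \big(\text{Fa\`a di Bruno at }g+h\big) - \big(\text{Fa\`a di Bruno at }g\big)
\]
and insert the hybrid expression $D^\beta f(g+h)\prod_j(D^{\gamma_j}g)^{\lambda_j}$. The first difference $(D^\beta f(g+h)-D^\beta f(g))\prod_j(D^{\gamma_j}g)^{\lambda_j}$ is bounded by $\omega_{D^\beta f}(\lvert h\rvert)\prod_j\lvert D^{\lvert\gamma_j\rvert}g\rvert^{\lvert\lambda_j\rvert}$, which after the same re-indexing gives the first sum in the claim. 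For the remaining piece $D^\beta f(g+h)\bigl[\prod_j(D^{\gamma_j}(g+h))^{\lambda_j}-\prod_j(D^{\gamma_j}g)^{\lambda_j}\bigr]$, I would expand each $D^{\gamma_j}(g+h)=D^{\gamma_j}g+D^{\gamma_j}h$ by the (multivariate) binomial theorem; every resulting monomial contains at least one factor of some $D^{\gamma_j}h$. Sorting the $\sum\lvert\lambda_j\rvert$ factors so that the first $m\ge 1$ are derivatives of $h$ and the remaining $s-m$ are derivatives of $g$, and again bounding $\lvert(D^{\gamma_j}\cdot)^{\mu}\rvert\leq \lvert D^{\lvert\gamma_j\rvert}\cdot\rvert^{\lvert\mu\rvert}$, produces exactly the second sum in the claim after absorbing binomial and Faà di Bruno constants into $C$.

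The only real difficulty is bookkeeping: checking that the repeated-index list $(l_1,\dots,l_s)$ obtained from $(\gamma_j,\lambda_j)_{j=1}^s$ is compatible with the stated constraints $l_j\ge 1$, $\sum l_j=n$, and that the number of distinct tuples $(\beta,\lambda,\gamma)$ contributing to a given $(s;l_1,\dots,l_s)$ is bounded independently of the derivatives, so that all combinatorial factors can be uniformly absorbed into the constant $C(n,d,k,l)$. Once this is done, both estimates follow mechanically from the Faà di Bruno identity without any further analytic input.
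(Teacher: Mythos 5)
The paper does not actually supply a proof of this corollary: it is stated immediately after Lemma~\ref{lem:multivariateFaaDiBruno} with the introductory remark ``As a corollary we get the following statements'' and no \texttt{proof} environment. Your proposal fills in exactly the derivation the paper leaves implicit, and it is correct: the first estimate is the Faà di Bruno identity with $\lvert D^{\gamma_j}g\rvert\le\lvert D^{\lvert\gamma_j\rvert}g\rvert$, the combinatorial factors absorbed into $C$, and the product re-indexed by repeating each $\lvert\gamma_j\rvert$ with multiplicity $\lvert\lambda_j\rvert$ to produce $(l_1,\dots,l_s)$ with $s=\lvert\beta\rvert$ and $\sum l_j=\sum_j\lvert\gamma_j\rvert\lvert\lambda_j\rvert=n$; the second estimate follows from the hybrid decomposition $D^\beta f(g+h)\prod(D^{\gamma_j}(g+h))^{\lambda_j}-D^\beta f(g)\prod(D^{\gamma_j}g)^{\lambda_j}=\bigl(D^\beta f(g+h)-D^\beta f(g)\bigr)\prod(D^{\gamma_j}g)^{\lambda_j}+D^\beta f(g+h)\bigl(\prod(D^{\gamma_j}(g+h))^{\lambda_j}-\prod(D^{\gamma_j}g)^{\lambda_j}\bigr)$, with the first piece controlled by $\omega_{D^s f}$ and the second expanded into monomials each containing at least one $h$-factor once the pure-$g$ monomial has cancelled, then reordered so the $h$-factors come first. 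Your phrasing ``every resulting monomial contains at least one factor of $D^{\gamma_j}h$'' is only true for the difference, not for the raw expansion of $\prod(D^{\gamma_j}(g+h))^{\lambda_j}$, but from the context you clearly mean the difference, so this is a cosmetic rather than substantive slip.
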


\begin{lem} \label{lem:composition}
Let $m \in \N_0$, $d<2m+2$ and let $\Omega \subset \R^d$ be an open, bounded set with Lipschitz boundary. Let $V \subset \R^{d \times \mathcal{R}}$ be open and $W_{\rm atom} \in C^{m+2}(V)$.

Now define the operator $F\colon B \mapsto DW_{\rm CB} \circ B$. Then
\[\{B\in H^{m+1}(\Omega;\R^{d \times d}) \colon \inf\limits_{x \in \Omega} \dist ( (B(x) \rho)_{\rho \in \mathcal{R}}, V^c)>0\}\]
is open in $H^{m+1}(\Omega;\R^{d \times d})$ and
\[F\colon \{B\in H^{m+1}(\Omega;\R^{d \times d}) \colon \inf\limits_{x \in \Omega} \dist ( (B(x) \rho)_{\rho \in \mathcal{R}}, V^c)>0\} \to H^{m+1}(\Omega;\R^{d \times d})\]
is well-defined, continuous and bounded. Furthermore, if $W_{\rm atom} \in C^{m+3}(V)$, then $F$ is $C^1$ with
\[DF(B)[H](x) = D^2 W_{\rm CB} (B(x))[H(x)].\]
\end{lem}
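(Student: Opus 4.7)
The plan is to leverage the two workhorses already cited in the lemma, namely the Faà di Bruno-type estimate of Corollary \ref{cor:FaadiBruno} together with the product estimate of Lemma \ref{lem:prodofsobolev}, and to exploit that $H^{m+1}(\Omega;\R^{d\times d})$ embeds into $C^0(\overline{\Omega};\R^{d\times d})$ because the standing assumption $d<2m+2$ is equivalent to $m+1>\frac{d}{2}$. This embedding immediately gives openness: if $B_0$ belongs to the set and $B \to B_0$ in $H^{m+1}$, then $B\to B_0$ uniformly, so $(B(x)\rho)_{\rho\in\mathcal{R}}$ stays uniformly close to $(B_0(x)\rho)_{\rho\in\mathcal{R}}$ and hence at positive distance from $V^c$ for $B$ close enough to $B_0$ in $H^{m+1}$.

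For the boundedness of $F$, fix $B$ in the set and choose a compact set $K\subset V$ containing $\{(B(x)\rho)_{\rho\in\mathcal{R}}: x\in\overline{\Omega}\}$ in its interior, and a neighborhood $\mathcal{U}$ of $B$ in $H^{m+1}$ on which the images $(B(x)\rho)_\rho$ still lie in $K$. Then $D^jW_{\rm CB}\circ B$ is in $L^\infty$, uniformly in $B\in\mathcal{U}$, for $1\leq j\leq m+2$. Apply Corollary \ref{cor:FaadiBruno} to $D^\alpha(DW_{\rm CB}\circ B)$ for $|\alpha|\leq m+1$ to get a pointwise estimate of the form
\[
|D^\alpha(DW_{\rm CB}\circ B)(x)|\leq C\sum_{s=1}^{|\alpha|}|D^{s+1}W_{\rm CB}(B(x))|\sum_{\substack{l_1,\ldots,l_s\geq 1\\ l_1+\cdots+l_s=|\alpha|}}\prod_{j=1}^s|D^{l_j}B(x)|.
\]
For each fixed choice of multi-indices, Lemma \ref{lem:prodofsobolev} with $p=2$, $M=m+1$, $K=s$, $\lambda_j=l_j$, and $N=|\alpha|\leq M$ bounds $\prod_j D^{l_j}B$ in $L^2(\Omega)$ by a product of the $H^{m+1-l_j}$-norms of $B$, hence by a power of $\|B\|_{H^{m+1}}$; combined with the $L^\infty$-bound on $D^{s+1}W_{\rm CB}\circ B$ this shows $F(B)\in H^{m+1}$ with the desired continuous estimate.

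For continuity, suppose $B_n\to B$ in $H^{m+1}$; dropping to a tail we may assume every $B_n$ lies in the neighborhood $\mathcal{U}$ above, so the scalar factors $D^{s+1}W_{\rm CB}\circ B_n$ converge to $D^{s+1}W_{\rm CB}\circ B$ uniformly (uniform convergence of $B_n$ plus uniform continuity of $D^{s+1}W_{\rm CB}$ on $K$). The product factors $\prod_j D^{l_j}B_n$ converge to $\prod_j D^{l_j}B$ strongly in $L^2$: when $s=1$ this is just $D^{|\alpha|}B_n\to D^{|\alpha|}B$ in $L^2$; when $s\geq 2$ every $l_j<|\alpha|$, so the additional statement of Lemma \ref{lem:prodofsobolev} applies with $L=0=M-N$ and all $\lambda_j<N$. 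Summing over the finitely many terms yields $D^\alpha F(B_n)\to D^\alpha F(B)$ in $L^2$, which is continuity in $H^{m+1}$.

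Finally, assuming $W_{\rm atom}\in C^{m+3}(V)$, the same continuity argument applied to $D^2W_{\rm CB}$ (which is then $C^{m+1}$) shows that $B\mapsto D^2W_{\rm CB}\circ B$ is a continuous map into $H^{m+1}$. Using that $H^{m+1}(\Omega)$ is a Banach algebra (Lemma \ref{lem:prodofsobolev} with $N=0$), define $L(B)\in L(H^{m+1},H^{m+1})$ by $L(B)H=(D^2W_{\rm CB}\circ B)\cdot H$; this depends continuously on $B$. Writing
\[
F(B+H)-F(B)-L(B)H=\int_0^1\bigl(D^2W_{\rm CB}\circ(B+sH)-D^2W_{\rm CB}\circ B\bigr)\,ds\ \cdot\ H,
\]
the Banach algebra bound gives
\[
\|F(B+H)-F(B)-L(B)H\|_{H^{m+1}}\leq C\,\|H\|_{H^{m+1}}\sup_{s\in[0,1]}\|D^2W_{\rm CB}\circ(B+sH)-D^2W_{\rm CB}\circ B\|_{H^{m+1}},
\]
and the supremum tends to $0$ as $\|H\|_{H^{m+1}}\to 0$ by the continuity of $B\mapsto D^2W_{\rm CB}\circ B$. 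Hence $F$ is Fréchet differentiable with $DF(B)=L(B)$, and this derivative is continuous in $B$, so $F\in C^1$. The only mildly delicate point is organizing the bookkeeping in Lemma \ref{lem:prodofsobolev} so that both the $L<M-N$ branch and the borderline $L=M-N$ branch are covered; everything else is a direct application of the tools listed above.
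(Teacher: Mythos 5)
Your proof is correct, and it is genuinely self-contained where the paper simply appeals to Valent's book (\cite[I.~Thm.3.1, II.~Thm.4.1]{valentbvps}) and remarks that boundedness follows ``along the same lines.'' Your route — Sobolev embedding $H^{m+1}\hookrightarrow C^0$ for openness, Corollary \ref{cor:FaadiBruno} plus Lemma \ref{lem:prodofsobolev} for boundedness and continuity, and a mean-value/Banach-algebra argument for the $C^1$ claim — recovers the statement entirely from tools already proved in the paper's own appendix, which is arguably preferable for a reader. The bookkeeping you flag (when $s=1$ use plain $L^2$-convergence of $D^{|\alpha|}B_n$, when $s\geq 2$ use the $L<M-N$ or borderline $L=M-N$ branch of Lemma \ref{lem:prodofsobolev}) is handled correctly; the one place to be careful is that for the continuity argument you implicitly rely on the \emph{exact} Faà di Bruno identity of Lemma \ref{lem:multivariateFaaDiBruno} (not just the pointwise \emph{bound} of Corollary \ref{cor:FaadiBruno}) so that $D^\alpha F(B_n)-D^\alpha F(B)$ really decomposes as a finite sum of ``scalar times product'' terms each of which you can send to zero. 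You also implicitly work first with $B\in C^\infty(\overline\Omega)$ and pass to $H^{m+1}$ by density, which is fine for a Lipschitz domain and compatible with the open constraint. Net assessment: correct proof, different (direct) route; what the student's approach buys is a self-contained argument with explicit constants, and what the paper's approach buys is brevity by delegation to the standard reference.
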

\begin{proof}
This is for the most part contained in \cite[I.~Thm.3.1]{valentbvps} and \cite[II.~Thm.4.1]{valentbvps}. Only the boundedness is not explicitly mentioned, but it follows along the same lines.
\end{proof}

\section{Elliptic Regularity with Sobolev Coefficients} \label{app:ellipticregularity}
We need a result on higher order regularity for linear systems that are elliptic in the Legendre-Hadamard sense. To be useful for quasilinear equations it is crucial that the regularity assumptions on the coefficients are not too strong. In the standard literature the typical assumption for $W^{k+2,p}$-regularity of the solution is $A \in C^{k,1} = W^{k+1,\infty}$ or, more rarely, $A \in W^{k+1,p}$ if $p > d$. We will reduce the last assumption to the much weaker condition $p(k+1)>d$. It is no coincidence that this assumption corresponds to what is needed for $A$ to be continuous. Actually, this is known to be the critical case. It seems reasonable that the case $p(k+1)=d$ can be included, as there are regularity results where the coefficients are not continuous but only have vanishing mean oscillation, but we will not investigate this question here.

For the sake of generality we will consider the general case $1<p<\infty$ but we are mostly interested in the case $p=2$. Even though it seems quite possible that this kind of result has been proven before, it does not seem to be available in the standard literature. It is largely, but not quite, contained in \cite{simpsonspector09} and, of course, builds heavily on the famous classical work \cite{agmondouglisnirenbergII}.

We consider a differential operator in divergence form
\[(Lu)_i = (- \divo(A \nabla u) + b \nabla u)_i = -\sum_{j,k,l} \frac{\partial}{\partial x_j} (A_{ijkl} \frac{\partial u_k}{\partial x_l}) + \sum_{k,l} b_{ikl} \frac{\partial u_k}{\partial x_l}. \]
In particular, we are interested in the cases $b=0$ and, if $A$ is Lipschitz, $b_{ikl} = \sum_j \frac{\partial A_{ijkl}}{\partial x_j}$. The second case corresponds to an operator in non-divergence form
\[(Lu)_i = \sum_{j,k,l} A_{ijkl} \frac{\partial^2 u_k}{\partial x_j \partial x_l}.\]
On some open set $\Omega \subset \R^d$ define the corresponding bilinear form
\[B(u,v) = \int_\Omega  \sum_{i, j,k,l} A_{ijkl} \frac{\partial v_i}{\partial x_j} \frac{\partial u_k}{\partial x_l} + \sum_{i,k,l} b_{ikl} v_i \frac{\partial u_k}{\partial x_l} \,dx.\]
whenever it is well-defined.

Let us first recall the classical Gårding inequality:
\begin{thm} \label{thm:contGårding}
Let $\Omega \subset \R^d$ be an open and bounded set and $\lambda_0 > 0$ such that
\[\sum_{i,j,k,l} A_{ijkl}(x) \xi_i \eta_j \xi_k \eta_l \geq \lambda_0 \lvert \xi \rvert^2 \lvert \eta \rvert^2\]
for all $x\in \Omega,\xi \in \R^N,\eta \in \R^d$. Furthermore, assume that $A$ is bounded and uniformly continuous with modulus $\omega$ and $b \in L^\infty(\Omega)$.

Then there exists a $\lambda_1=\lambda_1(\lVert A \rVert_\infty, \lambda_0, \Omega, \omega, \lVert b \rVert_\infty) \geq 0$, such that
\[ \frac{\lambda_0}{2} \int_\Omega \lvert \nabla u \rvert^2 \,dx \leq B(u,u) + \lambda_1 \int_\Omega \lvert u \rvert^2 \,dx \]
for all $u \in H^1_0(\Omega; \R^N)$.

If $A$ is constant and $b=0$, we can take $\lambda_1=0$ and can even achieve $\lambda_0$ instead of $\frac{\lambda_0}{2}$ as the constant on the left side.
\end{thm}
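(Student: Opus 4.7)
The approach is the classical two-step reduction: first prove the constant coefficient version via Fourier analysis, then treat variable coefficients by a freezing argument with a partition of unity (similar in spirit to the discrete Gårding proof given earlier for Theorem \ref{thm:discreteGårding}), and finally absorb the lower-order $b$-contribution using Young's inequality.

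For the constant coefficient case with $b=0$, I would extend $u\in H^1_0(\Omega;\R^N)$ by zero to $\R^d$ and apply Plancherel:
\[B(u,u) = \int_{\R^d} \sum_{i,j,k,l} A_{ijkl}\,\xi_j \xi_l\,\overline{\hat u_i(\xi)}\,\hat u_k(\xi)\,d\xi.\]
Decomposing $\hat u(\xi) = a(\xi) + i b(\xi)$ with $a,b\in\R^N$, the integrand equals $A[a\otimes\xi,a\otimes\xi] + A[b\otimes\xi,b\otimes\xi]$, which by Legendre-Hadamard is at least $\lambda_0 \lvert\xi\rvert^2 \lvert \hat u(\xi)\rvert^2$. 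Another application of Plancherel gives $B(u,u)\geq \lambda_0 \int_\Omega \lvert \nabla u\rvert^2\,dx$, which is the strong form stated in the theorem and also serves as the local inequality we will use below.

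For general $A$, pick $\delta=\delta(\omega,\lambda_0)>0$ with $\omega(\delta)\leq \lambda_0/16$, and choose a locally finite partition of unity $\{\eta_j^2\}$ with $\sum_j \eta_j^2\equiv 1$ on a neighborhood of $\overline\Omega$, $\supp\eta_j\subset B_\delta(z_j)$, $\lvert\nabla\eta_j\rvert\leq C(d)/\delta$. Choose $x_j\in\overline\Omega\cap\supp\eta_j$ whenever this set is nonempty. Using $\nabla(\eta_j u) = \eta_j\nabla u + u\otimes\nabla\eta_j$ and Young's inequality, for any $\mu>0$,
\[\eta_j^2 A(x)[\nabla u,\nabla u] \geq (1-\mu) A(x_j)[\nabla(\eta_j u),\nabla(\eta_j u)] - C(\lVert A\rVert_\infty,\mu)\lvert u\rvert^2 \lvert\nabla\eta_j\rvert^2 - \tfrac{\lambda_0}{16}\lvert\nabla(\eta_j u)\rvert^2,\]
where the last error term absorbs the coefficient-freezing discrepancy $\lvert A(x)-A(x_j)\rvert\leq \lambda_0/16$ on $\supp\eta_j$. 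Applying the constant-coefficient inequality from step one to $A(x_j)$ with test function $\eta_j u\in H^1_0$ (extended by zero), summing over $j$, and using $\sum_j \eta_j^2 =1$ together with $\sum_j\lvert\nabla(\eta_j u)\rvert^2 \leq 2\lvert\nabla u\rvert^2 + C(d,\delta)\lvert u\rvert^2$, yields
\[\int_\Omega A[\nabla u,\nabla u]\,dx \geq \tfrac{3\lambda_0}{4}\int_\Omega \lvert\nabla u\rvert^2\,dx - C(\lVert A\rVert_\infty,\lambda_0,\omega,\Omega) \int_\Omega \lvert u\rvert^2\,dx\]
after choosing $\mu$ suitably small. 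The $b$-term is then handled by
\[\Bigl\lvert \int_\Omega b_{ikl}u_i\partial_l u_k\,dx\Bigr\rvert \leq \tfrac{\lambda_0}{4}\int_\Omega\lvert\nabla u\rvert^2\,dx + C(\lVert b\rVert_\infty,\lambda_0)\int_\Omega\lvert u\rvert^2\,dx,\]
and combining everything delivers the asserted inequality with the correct constant $\lambda_0/2$ on the left. The main technical obstacle is purely bookkeeping, namely tracking the several small-parameter choices ($\mu$, $\delta$, the Young-inequality parameter for $b$) so that the gradient terms on the right-hand side can be absorbed without spoiling the factor $\lambda_0/2$; the quantitative dependence of $\lambda_1$ on $\omega$ enters precisely through the choice of $\delta$ and hence of the partition of unity.
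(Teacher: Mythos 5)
The paper states Theorem~\ref{thm:contGårding} as a classical result and does not supply a proof, so there is no internal argument to compare against; I can only assess your proposal on its own terms. Your three-step structure (Fourier/Plancherel for constant coefficients, freezing coefficients via a partition of unity with squared cutoffs, Young's inequality to absorb $b$) is precisely the standard textbook proof, and it does deliver the stated inequality with the stated dependence of $\lambda_1$ on $\omega$. One remark on step one: you should note that the imaginary cross term $A_{ijkl}\xi_j\xi_l(a_i b_k - b_i a_k)$ does not vanish pointwise but integrates to zero because $u$ is real, so $a$ is even and $b$ is odd in $\xi$; as written it reads as a pointwise identity.

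The one place where the write-up is genuinely imprecise is the displayed pointwise estimate
\[
\eta_j^2 A(x)[\nabla u,\nabla u] \geq (1-\mu) A(x_j)[\nabla(\eta_j u),\nabla(\eta_j u)] - C\lvert u\rvert^2 \lvert\nabla\eta_j\rvert^2 - \tfrac{\lambda_0}{16}\lvert\nabla(\eta_j u)\rvert^2.
\]
Legendre--Hadamard ellipticity gives no pointwise sign control of $A(x_j)[P,P]$ for a general matrix $P=\nabla(\eta_j u)$, so the factor $(1-\mu)$ cannot be extracted before integration. The correct pointwise inequality is
\[
\eta_j^2 A(x)[\nabla u,\nabla u] \geq A(x_j)[P,P] - \bigl(\mu + \tfrac{\lambda_0}{16}\bigr)\lvert P\rvert^2 - C(\lVert A\rVert_\infty,\mu)\lvert u\rvert^2\lvert\nabla\eta_j\rvert^2,
\]
and only after integrating and invoking the constant-coefficient inequality $\int A(x_j)[P,P]\,dx \geq \lambda_0\int\lvert P\rvert^2\,dx$ for $P=\nabla(\eta_j u)$ with $\eta_j u\in H^1_0$ can the error $(\mu+\lambda_0/16)\int\lvert P\rvert^2$ be absorbed into the main term. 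You may also want to require $\omega(2\delta)\le\lambda_0/16$ rather than $\omega(\delta)\le\lambda_0/16$, since both $x$ and $x_j$ lie in $B_\delta(z_j)$. Finally, the identity $\sum_j\eta_j\nabla\eta_j=0$ (from $\sum_j\eta_j^2\equiv 1$) kills the cross term in $\sum_j\lvert\nabla(\eta_j u)\rvert^2 = \lvert\nabla u\rvert^2 + \lvert u\rvert^2\sum_j\lvert\nabla\eta_j\rvert^2$, so you do not lose a factor of two there, which makes the final bookkeeping to reach $\lambda_0/2$ cleaner than the two-sided estimate $\sum_j\lvert\nabla(\eta_j u)\rvert^2\le 2\lvert\nabla u\rvert^2+C\lvert u\rvert^2$ you quoted. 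None of this affects the validity of the approach; they are presentation corrections.
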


We have the following a priori estimates:

\begin{thm} \label{thm:simpsonspectorapriori}
Let $k \in \N_0$ and let $\Omega \subset \R^d$ be open and bounded with $C^{k+2}$ boundary. Let $1<p<\infty$ and assume that $(k+1)p>d$. Let $\lambda, \Lambda >0$ and let $A \in W^{k+1,p}(\Omega;\R^{d\times d \times d \times d})$, such that $\lVert A \rVert_{W^{k+1,p}(\Omega)} \leq \Lambda$ and
\[A(x)[\xi \otimes \eta, \xi \otimes \eta] \geq \lambda \lvert \xi \rvert^2 \lvert \eta \rvert^2\]
for all $\xi, \eta \in \R^d$ and $x \in \Omega$. Then there is a $C=C(\Omega,\lambda, \Lambda, p, k) $ such that for all $r \in \{0, \dots, k\}$ and $u \in W^{r+2,p}(\Omega;\R^d)$ we have
\[ \lVert u \rVert_{W^{r+2,p}(\Omega)} \leq C (\lVert \divo(A \nabla u) \rVert_{W^{r,p}(\Omega)}+\lVert u \rVert_{L^p(\Omega)}).\]
We also have the estimate in non-divergence form
\[\lVert u \rVert_{W^{r+2,p}(\Omega)} \leq C \Big(\Big\lVert \Big( \sum_{j,k,l} A_{ijkl} \frac{\partial^2 u_k}{\partial x_j \partial x_l} \Big)_i \Big\rVert_{W^{r,p}(\Omega)}+\lVert u \rVert_{L^p(\Omega)}\Big).\]
\end{thm}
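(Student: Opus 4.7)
The plan is to prove the estimate by induction on $r$, following the Agmon–Douglis–Nirenberg framework for Legendre–Hadamard elliptic systems combined with the Simpson–Spector technique that permits coefficients with only Sobolev regularity. The key ingredients are the classical constant–coefficient $L^p$-estimate, a freezing/perturbation argument that relies on the continuity of $A$, and Lemma~\ref{lem:prodofsobolev} to control the commutator terms.

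For the base case $r=0$ the condition $(k+1)p>d$ yields by Sobolev embedding that $A \in C^0(\overline{\Omega})$, with a modulus of continuity $\omega$ depending only on $\Lambda,\Omega,k,p$. This is precisely the case treated in \cite{simpsonspector09}: on a small ball $B_\rho(x_0) \subset \Omega$, rewrite $\divo(A\nabla u) = \divo(A(x_0)\nabla u) + \divo((A-A(x_0))\nabla u)$ and apply the classical constant-coefficient ADN $L^p$-estimate to the first term. The top-order part $(A-A(x_0))\nabla^2 u$ of the perturbation is bounded in $L^p$ by $\omega(\rho)\lVert u\rVert_{W^{2,p}}$, absorbable for small $\rho$; the lower-order contribution $(\nabla A)(\nabla u)$ is bounded by Hölder and Sobolev embedding in terms of $\lVert A\rVert_{W^{k+1,p}}\lVert u\rVert_{W^{1,p}}$ and dealt with by standard interpolation. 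Boundary patches are reduced to the half-space by flattening $\partial\Omega$, which is of class $C^{k+2}\supset C^2$; a finite cover of $\overline{\Omega}$ then yields the global bound.

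For the inductive step $r-1 \to r$ the naive idea of differentiating the equation and applying the inductive hypothesis to $\partial_j u$ fails, since the commutator $\divo((\partial_j A)\nabla u)$ estimated via Lemma~\ref{lem:prodofsobolev} (with $M=k+1$, $\lambda_1=1$, $\lambda_2=k-r$) only gives $\lVert A\rVert_{W^{k+1,p}}\lVert u\rVert_{W^{r+2,p}}$, the very quantity I wish to control. Instead I would lift the freezing argument to the higher order: on $B_\rho(x_0)$, the constant-coefficient ADN estimate gives $\lVert u\rVert_{W^{r+2,p}(B_\rho(x_0))} \leq C(\lVert\divo(A(x_0)\nabla u)\rVert_{W^{r,p}} + \lVert u\rVert_{L^p})$. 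Expanding $\divo((A-A(x_0))\nabla u)$ in $W^{r,p}$ by the product rule produces one top-order term $(A-A(x_0))\nabla^{r+2} u$ with coefficient $\lVert A-A(x_0)\rVert_{L^\infty}\leq\omega(\rho)$ that is absorbable for $\rho$ small; every other piece $(\partial^\gamma A)(\nabla^{r+2-|\gamma|} u)$ with $|\gamma|\geq 1$ lies in $L^p$ by Lemma~\ref{lem:prodofsobolev} (the hypothesis $M=k+1>d/p$ being exactly our assumption) and involves strictly lower-order norms of $u$, which are handled by the inductive hypothesis together with standard interpolation. A finite cover with interior and boundary charts then produces the global $W^{r+2,p}$-estimate.

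The non-divergence form follows by the same localization scheme, since the classical constant-coefficient ADN estimate is available in non-divergence form as well; alternatively, the identity $\sum_{j,k,l}A_{ijkl}\partial_j\partial_l u_k = (\divo(A\nabla u))_i - \sum_{j,k,l}(\partial_j A_{ijkl})\partial_l u_k$ reduces matters to the divergence case together with a product estimate on $(\nabla A)(\nabla u)$ via Lemma~\ref{lem:prodofsobolev}. The main obstacle I foresee is the inductive step: without a Lipschitz bound on $A$ one cannot commute $\partial^\alpha$ through the operator in the classical way, so the freezing–absorption argument must be performed at every induction level. The hypothesis $(k+1)p>d$ is what makes the scheme work, simultaneously providing the continuity of $A$ needed for the freezing step and the threshold $M>d/p$ required by Lemma~\ref{lem:prodofsobolev}.
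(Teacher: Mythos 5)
Your overall strategy — localize, freeze coefficients, invoke the Agmon--Douglis--Nirenberg constant-coefficient estimate, absorb the perturbation using continuity of $A$, and control the remaining products via Sobolev multiplication — is exactly the route the paper takes, crediting \cite{simpsonspector09} for the case $r=k$ in divergence form and observing that the non-divergence case and the cases $r<k$ follow analogously. There are two imprecisions worth flagging, the second of which is the substantive one.

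First, a small attribution error: \cite{simpsonspector09} treats $r=k$, not $r=0$, so your ``base case'' is not literally the Simpson--Spector case. This is cosmetic since the scheme covers every $r$ the same way.

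Second, and more importantly, your treatment of the lower-order product terms is wrong as stated. You claim $(\nabla A)(\nabla u)$ is controlled by $\lVert A\rVert_{W^{k+1,p}}\lVert u\rVert_{W^{1,p}}$. That requires $\nabla A\in L^\infty$, i.e.\ $kp>d$, whereas the hypothesis is only $(k+1)p>d$. In the critical case (e.g.\ $k=0$, $d<p$) one must put $\nabla A\in L^p$ and $\nabla u\in L^\infty$, giving a bound by $\lVert A\rVert_{W^{k+1,p}}\lVert u\rVert_{W^{2,p}}$, the \emph{full-order} norm. The same occurs in the inductive step: applying Lemma~\ref{lem:prodofsobolev} to $(\partial^\gamma A)(\nabla^{r+2-|\gamma|}u)$ with $|\gamma|\geq 1$, $M=k+1$, $\lambda_1=|\gamma|$, $\lambda_2=k+1-|\gamma|$ lands you in $L^p$ with the bound $C\Lambda\lVert u\rVert_{W^{r+2,p}}$, which is the very quantity you are trying to control and is not ``strictly lower order.'' The saving observation — and this is exactly what the paper identifies as \emph{the} difference from Simpson--Spector's Moser-type tame estimates — is that the relevant embeddings in Lemma~\ref{lem:prodofsobolev} are compact (both $\lambda_k<N$), so Ehrling's lemma upgrades the product bound to an $\eps$-absorption estimate of the form
\begin{align*}
\lVert J\,\partial_j\partial_l u\rVert_{W^{r,p}(\Omega)} \leq \eps\,\lVert J\rVert_{W^{k+1,p}(\Omega)}\lVert u\rVert_{W^{r+2,p}(\Omega)} + C_\eps\lVert J\rVert_{W^{k+1,p}(\Omega)}\lVert u\rVert_{L^p(\Omega)} + C\lVert J\rVert_{L^\infty(\Omega)}\lVert u\rVert_{W^{r+2,p}(\Omega)},
\end{align*}
and similarly for $J\,\partial_l u$ in $W^{r+1,p}$. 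Your phrase ``standard interpolation'' may be gesturing at this, but the explicit form of the absorption is what makes the argument close, and the intermediate claim $\lVert u\rVert_{W^{1,p}}$ (and ``strictly lower-order norms of $u$'') is not correct as written. Once you replace that claim with the Ehrling-refined product estimate, your proposal and the paper's proof coincide.
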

\begin{proof}
In \cite{simpsonspector09} this has been proven in the case $r=k$ in divergence form based on the estimates of \cite{agmondouglisnirenbergII} for constant coefficients. But all other cases follow mostly along the same lines. This includes the case in non-divergence form since the proof is based on approximating $A$ locally by a constant. The case of smaller $r$ follows along the same lines as well, since the Agmon-Douglis-Nirenberg estimates are still valid. The only difference in all these cases is that one can no longer use the tame estimate of Moser. Instead one has to use the following finer estimates for multiplications in Sobolev spaces, namely
\begin{align*}
\lVert J \partial_l u \rVert_{W^{r+1,p}(\Omega)} &\leq \eps \lVert J \rVert_{W^{k+1,p}(\Omega)} \lVert u \rVert_{W^{r+2,p}(\Omega)} + C_\eps \lVert J \rVert_{W^{k+1,p}(\Omega)} \lVert u \rVert_{L^p(\Omega)}\\
&\qquad + C \lVert J \rVert_{L^\infty(\Omega)} \lVert u \rVert_{W^{r+2,p}(\Omega)}
\end{align*}
and
\begin{align*}
\lVert J \partial_j \partial_l u \rVert_{W^{r,p}(\Omega)} &\leq \eps \lVert J \rVert_{W^{k+1,p}(\Omega)} \lVert u \rVert_{W^{r+2,p}(\Omega)} + C_\eps \lVert J \rVert_{W^{k+1,p}(\Omega)} \lVert u \rVert_{L^p(\Omega)}\\
&\qquad + C \lVert J \rVert_{L^\infty(\Omega)} \lVert u \rVert_{W^{r+2,p}(\Omega)}
\end{align*}
for all $J \in W^{k+1,p}(\Omega), u \in W^{r+2,p}(\Omega)$, $\eps >0$, $0 \leq r \leq k$, $j$ and $l$ with constants that may depend on $p$, $k$ and $\Omega$. Both inequalities can be proven rather easily using the product rule, the Sobolev and Rellich–Kondrachov embedding theorems, as well as Ehrling's lemma.
\end{proof}

\begin{thm} \label{thm:optimalsobolevregularity}
Let $k \in \N_0$ and let $\Omega \subset \R^d$ be open and bounded with $C^{k+2}$ boundary. Let $1<p<\infty$,  $r \in \{0, \dots, k\}$ and assume that $(k+1)p>d$ and $p \geq \frac{2d}{d+2(r+1)}$. Let $\lambda, \Lambda_1 ,\Lambda_2 >0$ and let $A \in W^{k+1,p}(\Omega;\R^{d\times d \times d \times d})$, such that $\lVert A \rVert_{W^{k+1,p}(\Omega)} \leq \Lambda_1$ and
\[A(x)[\xi \otimes \eta, \xi \otimes \eta] \geq \lambda \lvert \xi \rvert^2 \lvert \eta \rvert^2\]
for all $\xi, \eta \in \R^d$ and $x \in \Omega$. Consider
\[L_{1,\mu}, L_{2,\mu} \colon W^{r+2,p}(\Omega;\R^d) \cap H^1_0(\Omega;\R^d) \to W^{r,p}(\Omega;\R^d)\]
defined by
\begin{align*}
L_{1,\mu} u &= -\divo (A \nabla u) + \mu u,\\
(L_{2,\mu} u)_i &= -\sum_{j,k,l} A_{ijkl} \frac{\partial^2 u_k}{\partial x_j \partial x_l} + \mu u_i.
\end{align*}
There is a $\mu_1=\mu_1(\Omega,k,p,\Lambda_1, \lambda)$ such that for all $\mu \geq \mu_1$ $L_{1,\mu}$ is an isomorphism. If additionally we have $A \in W^{1,\infty}(\Omega;\R^{d\times d \times d \times d})$ with $\lVert A \rVert_{W^{1,\infty}} \leq \Lambda_2$, then there is a $\mu_2=\mu_2(\Omega, k, p, \Lambda_1, \Lambda_2, \lambda)$ such that for all $\mu \geq \mu_2$ $L_{2,\mu}$ is an isomorphism. Furthermore, we have the estimates
\[\lVert L_{i,\mu}^{-1}\rVert \leq C_i,\]
where $C_1=C_1(\Omega,\lambda,\Lambda_1,p,k,\mu_{\rm max})>0$ and $C_2=C_2(\Omega,\lambda,\Lambda_1, \Lambda_2, p, k, \mu_{\rm max})>0$ and $\mu_i \leq \mu \leq \mu_{\rm max}$.
\end{thm}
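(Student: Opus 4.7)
The plan is to combine the a priori estimate of Theorem~\ref{thm:simpsonspectorapriori} with the Gårding inequality (Theorem~\ref{thm:contGårding}) and the method of continuity. I treat the divergence case $L_{1,\mu}$ in detail; the non-divergence case $L_{2,\mu}$ differs from $L_{1,\mu}$ by the first order perturbation $-(\divo A)\cdot\nabla$, whose coefficients lie in $L^\infty$ thanks to the extra hypothesis $A\in W^{1,\infty}$, and is absorbed by enlarging $\mu$.

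\textbf{Step 1 (Coercivity in $H^1_0$).} The condition $(k+1)p>d$ makes $A$ uniformly continuous on $\overline{\Omega}$ by Morrey, while $p\ge\frac{2d}{d+2(r+1)}$ is exactly what is needed for $W^{r+1,p}(\Omega)\hookrightarrow L^2(\Omega)$, hence $W^{r+2,p}\cap H^1_0\hookrightarrow H^1_0$. Consequently $L_{1,\mu}u \in W^{r,p}\subset L^p\subset H^{-1}$ for $u\in W^{r+2,p}\cap H^1_0$, so the equation $L_{1,\mu}u=f$ can be tested against $u$ in the weak $H^1_0$-sense. Theorem~\ref{thm:contGårding} produces constants $\alpha>0$, $\beta\ge 0$, depending only on the admissible data, such that
\[ B(u,u)+\mu\lVert u\rVert_{L^2}^2 \;\ge\; \alpha\lVert u\rVert_{H^1_0}^2 + (\mu-\beta)\lVert u\rVert_{L^2}^2.\]
For $\mu\ge\beta$ this gives $H^1_0$-coercivity and in particular injectivity of $L_{1,\mu}$ on $W^{r+2,p}\cap H^1_0$.

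\textbf{Step 2 (Removing the $L^p$-term).} Applied to $L_{1,0}u = L_{1,\mu}u - \mu u$, Theorem~\ref{thm:simpsonspectorapriori} combined with Ehrling's interpolation of $\lVert u\rVert_{W^{r,p}}$ between $\lVert u\rVert_{W^{r+2,p}}$ and $\lVert u\rVert_{L^p}$ yields $\lVert u\rVert_{W^{r+2,p}}\le C_0\bigl(\lVert L_{1,\mu}u\rVert_{W^{r,p}}+\lVert u\rVert_{L^p}\bigr)$ with $C_0$ depending on $\mu_{\rm max}$. I remove the lower order term by a compactness-contradiction argument: if no uniform constant works for $\mu\in[\beta,\mu_{\rm max}]$, extract $\mu_n\to\mu_\infty$ and $u_n$ with $\lVert u_n\rVert_{W^{r+2,p}}=1$ and $\lVert L_{1,\mu_n}u_n\rVert_{W^{r,p}}\to 0$. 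Rellich--Kondrachov gives $L^p$-precompactness of $(u_n)$, and reapplying the a priori estimate to the differences $u_n-u_m$ upgrades this to convergence in $W^{r+2,p}$ to a nontrivial element of $\ker L_{1,\mu_\infty}$, contradicting Step~1. Hence $\lVert u\rVert_{W^{r+2,p}}\le C_1\lVert L_{1,\mu}u\rVert_{W^{r,p}}$ uniformly on the $\mu$-range.

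\textbf{Step 3 (Existence via continuity; main obstacle).} Introduce the homotopy $L_t = (1-t)(-\Delta+\mu)+tL_{1,\mu}$; the interpolated coefficients satisfy the Legendre--Hadamard condition with constant $\min(1,\lambda)$, are bounded in $W^{k+1,p}$ by $1+\Lambda_1$, and inherit a common modulus of uniform continuity. Steps~1 and~2 therefore apply uniformly in $t$, giving $\lVert u\rVert_{W^{r+2,p}}\le C\lVert L_tu\rVert_{W^{r,p}}$ with $C$ independent of $t$. At $t=0$, $-\Delta+\mu$ is an isomorphism $W^{r+2,p}\cap H^1_0\to W^{r,p}$ by classical scalar $L^p$-elliptic regularity applied component by component, and the method of continuity extends invertibility to $L_1=L_{1,\mu}$ with the claimed bound on $\lVert L_{1,\mu}^{-1}\rVert$. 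The main technical difficulty is ensuring uniformity of the compactness-contradiction argument along this homotopy---one must extract subsequences in $t_n$ and $\mu_n$ simultaneously---which is possible thanks to uniform control of all the relevant constants. For $L_{2,\mu}$ the additional first order remainder is absorbed via Young's inequality in the coercivity step, producing the larger threshold $\mu_2$ depending additionally on $\Lambda_2=\lVert A\rVert_{W^{1,\infty}}$.
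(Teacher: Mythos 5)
Your proposal is correct and follows essentially the same route as the paper's proof: injectivity from the Gårding inequality (Theorem~\ref{thm:contGårding}), a uniform a priori estimate $\lVert u\rVert_{W^{r+2,p}}\le C\lVert L^t_{i,\mu}u\rVert_{W^{r,p}}$ obtained by a compactness--contradiction argument built on Theorem~\ref{thm:simpsonspectorapriori}, and the method of continuity along the linear homotopy $A^t=tA+(1-t)\Id$ starting from the component-wise Laplacian. The only deviations are cosmetic (you phrase the compactness step via Cauchy differences $u_n-u_m$ where the paper extracts a weak limit and identifies it as a kernel element, and you leave the open/closed steps of the continuity method implicit where the paper writes them out as Claims~3--5), none of which changes the substance of the argument.
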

\begin{proof}
We argue by continuity. Set $(A^t)_{ijkl} = t A_{ijkl} +(1-t) \delta_{ik} \delta_{jl}$ for $t \in [0,1]$ and denote by $L_{1,\mu}^t, L_{2,\mu}^t$ the corresponding operators. Since $(k+1)p>d$, these operators are well defined by Lemma \ref{lem:prodofsobolev} with
\[ \lVert L_{i,\mu}^t \rVert_{L(W^{r+2,p}(\Omega), W^{r,p}(\Omega))} \leq 1+ \lvert \mu \rvert + C(\Omega,k,p) \lVert A \rVert_{W^{k+1,p}(\Omega)} \]
for all $t,i,\mu$.

\emph{Claim 1: There are $\mu_i$, $i=1,2$, such that $L_{i,\mu}^t$ is one-to-one for all $\mu \geq \mu_i$.}

We can apply Theorem \ref{thm:contGårding}. Note that the modulus $\omega$ can be chosen only dependent on $\Omega, k, p, \Lambda$. If $L_{1,\mu}^t u = 0$, we can apply Theorem \ref{thm:contGårding} with $b=0$ to obtain
\[\frac{\lambda}{2} \int_\Omega \lvert \nabla u \rvert^2 \,dx \leq 0\]
and thus $u=0$, whenever $\mu \geq \mu_1 = \mu_1(\Omega, k, p, \Lambda_1, \lambda)$. If $L_{2,\mu}^t u = 0$, we can apply Theorem \ref{thm:contGårding} with $b_{ikl} = -\sum_j \frac{\partial A_{ijkl}}{\partial x_j}$ to obtain
\[\frac{\lambda}{2} \int_\Omega \lvert \nabla u \rvert^2 \,dx \leq 0\]
and thus $u=0$, whenever $\mu \geq \mu_2 = \mu_2(\Omega, k, p, \Lambda_1, \Lambda_2, \lambda)$.

\emph{Claim 2: There are constants $C_1, C_2>0$ with $C_1=C_1(\Omega,\lambda,\Lambda_1,p,k,\mu_{\rm max})$ and $C_2=C_2(\Omega,\lambda,\Lambda_1, \Lambda_2, p, k, \mu_{\rm max})$ such that for all $t \in [0,1]$, $u \in W^{r+2,p}(\Omega; \R^d) \cap H^1_0(\Omega;\R^d)$, $i \in \{1,2\}$ and $\mu_i \leq \mu \leq \mu_{\rm max}$ we have
\[ \lVert u \rVert_{W^{r+2,p}(\Omega)} \leq C_i \lVert L_{i,\mu}^t u \rVert_{W^{r,p}(\Omega)}.\]}
We argue by contradiction. If there were no such $C$, then there exist $t_n, \mu_n, u_n, A_n$ such that
\[ 1=\lVert u_n \rVert_{W^{r+2,p}(\Omega)} > n \lVert L_{i,\mu_n}^{t_n}(A_n) u_n \rVert_{W^{r,p}(\Omega)}.\]
Furthermore,
\[A_n(x)[\xi \otimes \eta, \xi \otimes \eta] \geq \lambda \lvert \xi \rvert^2 \lvert \eta \rvert^2\]
for all $\xi, \eta \in \R^d$ and $x \in \Omega$ and $\lVert A_n \rVert_{W^{k+1,p}} \leq \Lambda_1$. Then we find a subsequence (not relabeled), such that $t_n \to t$, $\mu_n \to \mu$ and $u_n \wto u$ in $W^{r+2,p}$ and $A_n \wto A$ in $W^{k+1,p}$. In particular $A_n \to A$ uniformly, $A$ is still elliptic with constant $\lambda$ and $u_n \to u$ strongly in $W^{r+1,p}$ and weakly in $H^1_0$. If $i=1$ we easily deduce that $L_{1,\mu_n}^{t_n}(A_n) u_n \to L_{1,\mu}^{t}(A) u$ in distribution. If $i=2$ we additional have $\lVert A_n \rVert_{W^{1,\infty}} \leq \Lambda_2$. By uniform convergence we also have $\lVert A \rVert_{W^{1,\infty}} \leq \Lambda_2$. We also have directly $L_{2,\mu_n}^{t_n}(A_n) u_n \wto L_{2,\mu}^{t}(A) u$ in $L^p$. But in both cases we also now that $\lVert L_{i,\mu_n}^{t_n}(A_n) u_n \rVert_{W^{r,p}(\Omega)} \to 0$. Hence, $L_{i,\mu}^{t}(A) u = 0$ with $u \in W^{r+2,p}(\Omega;\R^d) \cap H^1_0(\Omega;\R^d)$ and thus $u=0$ by claim 1. Now we use Theorem \ref{thm:simpsonspectorapriori} to find
\[1 = \lVert u_n \rVert_{W^{r+2,p}(\Omega)} \leq C \Big(\lVert L_{i,\mu_n}^{t_n}(A_n) u_n \rVert_{W^{r,p}(\Omega)}+ \lvert \mu_n \rvert \lVert u_n \rVert_{W^{r,p}(\Omega)} + \lVert u_n \rVert_{L^p(\Omega)}\Big).\]
Since the right hand side goes to $0$, we have a contradiction.

\emph{Claim 3: For $\mu \geq \mu_i$ the sets
\[I_{i,\mu} = \{t \in [0,1] \colon L_{i,\mu}^{t} \text{is onto}\}\]
are closed in $[0,1]$.}

Given $t_n \to t$, $t_n \in I_{i,\mu}$ and $f \in W^{r,p}(\Omega; \R^d)$, there are $u_n \in W^{r+2,p}(\Omega; \R^d) \cap H^1_0(\Omega;\R^d)$ such that $L_{i,\mu}^{t_n} u_n =f$. By claim 2 the $u_n$ are bounded in $W^{r+2,p}(\Omega; \R^d)$. On a subsequence (not relabeled) we thus find $u_n \wto u$ in $W^{r+2,p}(\Omega; \R^d)$ and easily deduce $L_{i,\mu}^{t_n} u_n \to L_{i,\mu}^{t} u$ in the sense of distributions. Hence, $L_{i,\mu}^{t} u = f$ and $t \in I_{i,\mu}$.

\emph{Claim 4: For $\mu \geq \mu_i$ the sets
\[I_{i,\mu} = \{t \in [0,1] \colon L_{i,\mu}^{t} \text{is onto}\}\]
are open in $[0,1]$.}

Let $t \in I_{i,\mu}$. Then $L_{i,\mu}^{t}$ is continuous, onto and one-to-one and therefore an isomorphism by the closed graph theorem. Set
\[\delta = \frac{1}{ 2 \lVert (L_{i,\mu}^{t})^{-1} \rVert (\lVert L_{i,\mu}^{1} \rVert + \lVert L_{i,\mu}^{0} \rVert)} \]
and let $s \in [0,1]$ with $\lvert s-t \rvert < \delta$. Let $f \in W^{r,p}(\Omega; \R^d)$ and let $u_0 = (L_{i,\mu}^{t})^{-1} f$. We have to find a $u \in W^{r+2,p}(\Omega;\R^d) \cap H^1_0(\Omega;\R^d)$ with $L_{i,\mu}^{s} u = f$ which is equivalent to finding a fixed point of
\[G_s(u) = (L_{i,\mu}^{t})^{-1} (L_{i,\mu}^{t}u - L_{i,\mu}^{s} u +f).\]
We claim that $G_s \colon \overline{B_r(u_0)} \to \overline{B_r(u_0)}$ is well defined and a contraction for $r = \lVert u_0 \rVert$. Indeed, since
\[\lVert L_{i,\mu}^{t} - L_{i,\mu}^{s} \rVert \leq \delta (\lVert L_{i,\mu}^{1} \rVert + \lVert L_{i,\mu}^{0} \rVert),\]
we find
\[\lVert G_s(u) - u_0 \rVert \leq \lVert (L_{i,\mu}^{t})^{-1} \rVert \lVert L_{i,\mu}^{t} u - L_{i,\mu}^{s} u \rVert \leq  2 r \delta \lVert (L_{i,\mu}^{t})^{-1} \rVert (\lVert L_{i,\mu}^{1} \rVert + \lVert L_{i,\mu}^{0} \rVert) \leq r\]
and
\[\lVert G_s(u) - G_s(v) \rVert \leq \lVert (L_{i,\mu}^{t})^{-1} \rVert \lVert L_{i,\mu}^{t} - L_{i,\mu}^{s} \rVert \lVert u-v \rVert \leq  \frac{1}{2} \lVert u-v \rVert.\]
Banach's fixed point theorem gives the desired result.

\emph{Claim 5: For $\mu \geq \mu_i$, we have $0 \in I_{i,\mu}$.}

This is just the (scalar) Laplacian in each component. This is a well known result. E.g., this is a special case of results in \cite{gilbargtrudinger}.

Since $[0,1]$ is connected, we have shown that $I_{i,\mu} = [0,1]$ for $\mu \geq \mu_i$. In particular, $L_{1,\mu}$ and $L_{2,\mu}$ are isomorphisms. The estimates follow from claim 2.
\end{proof}

\bibliographystyle{myalpha}
\bibliography{papers}

\end{document}